\documentclass[11pt]{amsart}

\usepackage[T1]{fontenc}
\usepackage{lmodern}
\usepackage{microtype}
\usepackage{amsmath,amssymb,esint}
\usepackage{amsrefs}
\usepackage[margin=1.08in]{geometry}
\usepackage{xcolor}
\usepackage[colorlinks=true,linkcolor=blue!50!black,citecolor=blue!50!black,urlcolor=blue!50!black]{hyperref}
\hypersetup{
  pdftitle={The weak-type (1,1) bound for the Hardy--Littlewood maximal function is $O(\sqrt{n} \log n)$},
  pdfauthor={Daniel Spector, Cody B. Stockdale},
  pdfsubject={The weak-type (1,1) bound for the Hardy--Littlewood maximal function is $O(\sqrt{n} \log n)$},
  hypertexnames=false
}

\newtheorem{theorem}{Theorem}[section]
\newtheorem{proposition}[theorem]{Proposition}
\newtheorem{lemma}[theorem]{Lemma}

\numberwithin{equation}{section}
\theoremstyle{remark}

\author{Daniel Spector}
\address{Daniel Spector, Department of Mathematics, National Taiwan Normal University, No. 88, Section 4, Tingzhou Road, Wenshan District, Taipei City, Taiwan 116, R.O.C.
\newline
Department of Mathematics, University of Pittsburgh, Pittsburgh, PA 15261, USA
}
\email{spectda@gapps.ntnu.edu.tw}
\thanks{D. Spector is supported by the National Science and Technology Council of Taiwan under research grant number 113-2115-M-003-017-MY3.}

\author{Cody B. Stockdale}
\address{Cody B. Stockdale, School of Mathematical and Statistical Sciences, Clemson University, Clemson, SC 29634, USA}
\email{cbstock@clemson.edu}
\thanks{C. B. Stockdale is supported by the National Science Foundation, DMS grant No. 2555710.}

\title[Weak-type bounds for maximal functions]
{The weak-type (1,1) bound for the Hardy--Littlewood maximal function is $O(\sqrt{n} \log n)$}
\begin{document}

\begin{abstract}
We prove a weak-type $(1,1)$ estimate for the centered Hardy--Littlewood maximal function with respect to Euclidean balls with dimensional dependence $O(\sqrt{n} \log n)$.  This improves the order of growth in the classical $O(n)$ estimate of Stein and Str\"omberg.  The proof goes through a pointwise bound of the Hardy--Littlewood maximal operator by the heat maximal operator with $\sqrt{n}$ loss.  The key technical aspect of our result is an improvement of the weak-type bound for the heat maximal operator from $O(\sqrt{n})$ to $O(\log n)$.  
\end{abstract}

\maketitle

\section{Introduction}
The centered Hardy--Littlewood maximal function over Euclidean balls of a function $f \in L^1_{\text{loc}}(\mathbb{R}^n)$ is defined as \looseness=-1
\begin{align*}
 \mathcal{M} f(x):=\sup_{r>0}\frac1{\omega_n r^n}
                   \int_{B(x,r)}|f(y)|\,d y,
\end{align*}
where  $\omega_n:=\pi^{n/2}/\Gamma(n/2+1)$ is the measure of the unit ball in $\mathbb{R}^n$.  A covering argument gives
\begin{align}\label{vitali}
 	\| \mathcal{M}f\|_{L^{1,\infty}(\mathbb{R}^n)}:= \sup_{\alpha > 0} \alpha |\{x\in\mathbb{R}^n\colon | \mathcal{M}f(x)|>\alpha\}| \leq 3^n \|f\|_{L^1(\mathbb{R}^n)}
\end{align}
for all $f \in L^1(\mathbb{R}^n)$, while the pioneering paper of Stein and Str\"omberg \cite[Theorem 3 on p.~264]{SteinStromberg} showed that \eqref{vitali} can be improved to a bound that is linear in the dimension: 
\begin{align}\label{hds_HL}
\| \mathcal{M}f\|_{L^{1,\infty}(\mathbb{R}^n)} \lesssim n \|f\|_{L^1(\mathbb{R}^n)}
\end{align}
for all $f \in L^1(\mathbb{R}^n)$. Here and in the sequel, we use the notation $A \lesssim B$ if there exists a dimension-free constant $C>0$ such that $A\leq CB$ and write $A \approx B$ if $A \lesssim B \lesssim A$. 

In the same paper, Stein and Str\"omberg asked whether the constant in the weak-type $(1,1)$ bound for $\mathcal{M}$ can be taken to be independent of dimension, a question mentioned again in Stein's conference proceedings for the 1986 International Congress of Mathematicians \cite[Problem (a) on p.~203]{Stein1987ICM}.  Despite the large body of research in the area \cite{Aldaz,DK,GG,LQ,MSW,NW,NiWr,Z,Aubrun2009,Bourgain1986AJM, Bourgain1986Israel, Bourgain2014Cube,BMSWCubes,BMSWBalls, BMSWVariational, BMSWSurvey, Carbery1986, DeleavalGuedonMaurey, Muller1990,KMPW2023, NaorTao2010, Stein1983BAMS}, which was catalyzed by  \cite{SteinStromberg} and the series of papers by Stein on infinite-dimensional harmonic analysis \cite{Stein1982BAMS,Stein1983BAMS,SteinStromberg}, to our knowledge, the order of growth in the general $O(n)$
Euclidean-ball weak-type $(1,1)$ upper bound has not previously been
improved, and no lower bound diverging with the dimension is known.  We refer the reader to \cite{Aubrun2009,AldazCubes,IakovlevStromberg} for a negative answer to the analogous question of whether the cube maximal function admits a dimension-free bound and \cite{AldazPerezLazaro} for a positive answer for radially decreasing functions.

Our main result is the following improved dimensional weak-type $(1,1)$ bound for $\mathcal{M}$. 
\begin{theorem}\label{cor:hl-main}
There is an absolute constant $C>0$ such that for every $n \in \mathbb{N}$ and every $f \in L^1(\mathbb{R}^n)$,
\[
\| \mathcal{M}f\|_{L^{1,\infty}(\mathbb{R}^n)} \leq C \sqrt n\log(1+n) \|f\|_{L^1(\mathbb{R}^n)}.
\]
\end{theorem}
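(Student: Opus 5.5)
We follow the route announced in the abstract: first dominate $\mathcal{M}$ pointwise by the heat maximal operator at a cost of $\sqrt{n}$, then prove a weak-type $(1,1)$ bound of order $\log n$ for the heat maximal operator. Let
\[
\mathcal{H}f(x):=\sup_{t>0} (4\pi t)^{-n/2}\int_{\mathbb{R}^n} e^{-|x-y|^2/4t}|f(y)|\,dy .
\]

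\textbf{Step 1: pointwise comparison.} We aim to show $\mathcal{M}f(x)\lesssim \sqrt{n}\,\mathcal{H}f(x)$. Fix $r>0$ and choose $t$ so that the Gaussian variance matches the ball, i.e. $2nt=r^2$ (more precisely, pick $4t=2r^2/n$ to optimize). On $B(x,r)$ the kernel satisfies $e^{-|x-y|^2/4t}\ge e^{-r^2/4t}$. Hence
\[
\frac{1}{\omega_n r^n}\int_{B(x,r)}|f|\le \frac{(4\pi t)^{n/2}e^{r^2/4t}}{\omega_n r^n}\,\mathcal{H}f(x).
\]
Substituting $\omega_n=\pi^{n/2}/\Gamma(n/2+1)$ and $r^2/4t=n/2$, the prefactor becomes $\Gamma(n/2+1)(2/n)^{n/2}e^{n/2}$. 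By Stirling this is $\approx \sqrt{n}$. This step is routine; the only care needed is optimizing $t$ in order to get exactly $\sqrt{n}$ and not a worse power.

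\textbf{Step 2: weak $(1,1)$ for $\mathcal{H}$ with constant $O(\log n)$.} This is the main obstacle. The Stein semigroup maximal theorem or the Hopf--Dunford--Schwartz ergodic theorem gives dimension-free $L^p$ bounds for $p>1$, but at $p=1$ it yields nothing directly. Stein--Strömberg-type arguments give $O(\sqrt n)$ (or $O(n)$). My plan is to split the supremum over $t$ according to the scale relative to the local structure of $f$.

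First, note that the heat kernel at time $t$ is concentrated on the thin shell $|x-y|\approx\sqrt{2nt}$ of relative width $O(1/\sqrt n)$. I would discretize $t$ into dyadic scales, so that over a window of $O(\log n)$ consecutive dyadic scales the kernels are essentially comparable to a single average. On the other hand, kernels with very different $t$ (ratio $\gg$ some power of $n$) are "orthogonal" in the sense used by Stein--Strömberg for the dimension-free $L^p$ argument. Concretely, I would compare $\mathcal{H}$ with a maximal function over the lacunary sequence $t_k=n^{k}$ (or $2^k$ grouped in blocks of length $\log n$). For a lacunary family of Gaussians, the weak $(1,1)$ bound should follow from a Calderón--Zygmund decomposition adapted to Gaussian measure scales, or more cleanly from a martingale/subordination comparison. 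Using the semigroup property $P_{t}=P_{t-s}P_s$, one obtains a dimension-free weak $(1,1)$ bound. Interpolating within each block of $O(\log n)$ scales then costs a factor $\log n$: one bounds $\sup_{t\in[t_k,t_{k+1}]}P_t|f|$ by a sum of $O(\log n)$ terms controlled by the lacunary maximal operator.

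I expect the genuinely delicate point to be the dimension-free weak $(1,1)$ estimate for the lacunary heat maximal operator. I would first try a Calderón--Zygmund-style decomposition at level $\alpha$ with respect to the Gaussian smoothing itself: set $g=\min(P_{t}|f|,\alpha)$-type truncations. I would then use that $P_t$ of a function supported where mass is concentrated spreads it over volume $\approx (2\pi e\, t)^{n/2}$-sized shells, exploiting the concentration of measure of the Gaussian.

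Combining the two steps,
\[
\|\mathcal{M}f\|_{L^{1,\infty}}\lesssim\sqrt n\,\|\mathcal{H}f\|_{L^{1,\infty}}\lesssim\sqrt n\log(1+n)\|f\|_{L^1},
\]
which is Theorem~\ref{cor:hl-main}.
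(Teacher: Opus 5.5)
Your Step 1 is correct and is the paper's argument: the paper also takes $t=r^2/(2n)$ and uses Stirling to get $\mathbf 1_{B(x,r)}(y)/(\omega_nr^n)\lesssim\sqrt n\,h_{r^2/(2n)}(x-y)$, where $h_t$ is the heat kernel. The overall architecture is also the paper's: a $\sqrt n$ loss from the transfer times an $O(\log n)$ weak-type bound for the heat maximal operator.

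\textbf{The gap.} Step 2 carries all the difficulty, and the mechanism you propose for it fails.

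\emph{The comparability claim is false.} In high dimension, kernels over a window of $O(\log n)$ dyadic scales are not ``essentially comparable to a single average.'' For $t\le t'$ one has $h_t\le (t'/t)^{n/2}h_{t'}$, with equality at the origin. So a dimension-free pointwise domination $h_t\le Ch_{t'}$ holds only when $t'/t\le 1+O(1/n)$; already $h_t(0)/h_{2t}(0)=2^{n/2}$. The shell concentration you note (radius $\sqrt{2nt}$, relative width $n^{-1/2}$) points the same way: times differing by a factor $1+C/\sqrt n$, with $C$ large, have essentially disjoint kernels.

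\emph{Consequence.} $\sup_{t\in[t_k,t_{k+1}]}P_t|f|$ cannot be bounded by $O(\log n)$ values of a lacunary maximal operator. Pointwise domination would need about $n$ sample times per dyadic window. The semigroup identity $P_t=P_{t-t_k}P_{t_k}$ only reproduces a continuous supremum. That the continuous heat maximal operator is not controlled by its dyadic variant is exactly the obstruction the paper's argument is built to avoid.

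\emph{The lacunary bound is also unproved.} Independently of the above, the dimension-free weak $(1,1)$ bound you assert for the lacunary operator has no argument behind it. A Calder\'on--Zygmund decomposition on cubes loses $2^n$, there is no dimension-free doubling structure to run one on, and ``martingale/subordination comparison'' is not an argument. This claim is a problem of the same nature as the theorem itself.

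\textbf{What the paper does instead.} It never discretizes time.
\begin{itemize}
\item It reduces to a finite atomic measure $\nu$ of mass $\mathfrak m$.
\item It performs a Laplacian partial balayage $\nu=\kappa\mathbf 1_\Omega\,dx-\Delta u$ at height $\kappa=\alpha/(2\log(eAn))$, so that $|\Omega|=\mathfrak m/\kappa$.
\item With $\mu=\kappa\mathbf 1_\Omega\,dx$ and $\sigma=-\Delta u$, on $\Omega^c$ it writes $H_t\nu=H_t\mu+\tilde P_t\sigma+R_t\sigma$, where $H_t\mu\le\kappa$ and $\tilde P_t\sigma=-H_tu/t\le0$.
\item It controls the full continuous supremum of the residual in logarithmic time $v=\log t$, by the $L^1$ norm in $\xi$ of its Fourier transform $r(\xi)\widehat F_{\sigma,x}(\xi)$.
\item Mellin frequencies $|\xi|>An$ are bounded in $L^1(\Omega^c)$ using an exact Gamma-function formula for $\widehat F_{\sigma,x}$.
\item Frequencies $|\xi|\le An$ are bounded in $L^2(\Omega^c)$ through a square-function estimate $\lesssim\sqrt N\,\kappa\mathfrak m$ for the Abel powers $A_s^N$ evaluated off $\Omega$.
\end{itemize}
The $\log n$ comes from summing over the roughly $\log n$ dyadic Mellin-frequency bands below $An$, not from grouping time scales.
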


To describe our proof, let us recall the approach of Stein and Str\"omberg in \cite{SteinStromberg}, which consists of two components and is in the spirit of the dimension-free estimates for the Poisson maximal operator in \cite[pp.~48--49]{SteinTopics}.  The first is the Hopf--Dunford--Schwartz maximal ergodic theorem \cite[Lemma VIII.7.6 and Theorem VIII.7.7, pp.~690--691]{DunfordSchwartz}, which gives a dimension-free bound for the maximal operator associated with the ergodic averages
\begin{align}\label{ergodic_maximal}
\tilde{P}_tf(x) := \frac{1}{t}\int_0^t H_sf(x)\;ds.
\end{align}
Here, $H_t$ denotes the heat propagator 
\[
 H_tf:=e^{t\Delta}f = h_t\ast f,
\]
where $h_t(x):=(4\pi t)^{-n/2}e^{-|x|^2/(4t)}$ is the heat kernel.  The second is an upper bound for the Hardy--Littlewood maximal function in terms of the corresponding ergodic maximal operator:
\begin{align*}
\mathcal{M} f(x) \lesssim n \sup_{t>0} \tilde{P}_tf(x).
\end{align*}
The bound \eqref{hds_HL} follows from these steps, as they give 
$$
	\|\mathcal{M}f\|_{L^{1,\infty}(\mathbb{R}^n)} \lesssim n\Big\|\sup_{t>0} \tilde{P}_tf\Big\|_{L^{1,\infty}(\mathbb{R}^n)} \lesssim n\|f\|_{L^1(\mathbb{R}^n)}
$$
for all nonnegative $f \in L^1(\mathbb{R}^n)$. 

Our first departure from the approach of Stein and Str\"omberg is to
dominate the Hardy--Littlewood maximal function by the ``larger'' heat
maximal operator 
$$
 \mathcal{H}_{\ast}f(x):=\sup_{t>0}|H_tf(x)|,
$$
which only gives up a factor of $\sqrt{n}$:  If $f \in L^1_{\text{loc}}(\mathbb{R}^n)$ is nonnegative, then 
\begin{align}\label{prop:ball-heat-intro}
 \mathcal{M} f(x) \lesssim \sqrt{n} \mathcal{H}_{\ast} f(x)
\end{align}
for almost every $x \in \mathbb{R}^n$. To prove Theorem \ref{cor:hl-main}, it therefore suffices to establish a weak-type $(1,1)$ bound for $\mathcal{H}_*$ that grows logarithmically in the dimension. 
\begin{theorem}\label{thm:main}
There is an absolute constant $C>0$ such that for every $n \in \mathbb{N}$ and every $f \in L^1(\mathbb{R}^n)$,
\[
\|  \mathcal{H}_{\ast}f\|_{L^{1,\infty}(\mathbb{R}^n)} \leq C \log(1+n) \|f\|_{L^1(\mathbb{R}^n)}.
\]
\end{theorem}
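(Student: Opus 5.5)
We will compare the heat maximal operator with the ergodic averages $\tilde P_t$ of \eqref{ergodic_maximal}, for which the Hopf--Dunford--Schwartz theorem gives a dimension-free weak-type $(1,1)$ bound. We will lose only a logarithmic factor, coming from a dyadic decomposition in time. We may assume $f\ge 0$.

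The first step is a pointwise domination of a single heat average by a few ergodic averages. Since $t\mapsto H_tf(x)$ is not monotone, $H_tf$ cannot simply be bounded by $\tilde P_{ct}f$. Instead we will use the semigroup and subordination structure. Writing $\tilde P_t f=\int_0^\infty \phi_t(s) H_sf\,ds$ with $\phi_t=t^{-1}\mathbf 1_{[0,t]}$, we have on the Fourier side the multiplier
\[
m_t(\xi)=\frac{1-e^{-t|\xi|^2}}{t|\xi|^2}.
\]
The plan is to write the heat multiplier $e^{-t|\xi|^2}$ as a combination of dilates $m_{2^k t}$ together with a remainder. In physical space this means writing $h_t$ as a sum of kernels, each controlled pointwise by a positive multiple of an ergodic-average kernel.

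The key observation is that for the Gaussian in dimension $n$ the radial profile concentrates. The kernel $h_s$ is essentially supported on the shell $|x|\approx\sqrt{2ns}$, of relative width $O(1/\sqrt n)$. Hence $h_t(x)$ can be bounded by an average of $h_s(x)$ over $s\in[t,(1+\delta)t]$ only up to a factor comparable to $1/\delta$, and it is acceptable only when $\delta\gtrsim 1/\log n$-type scales are combined correctly. Concretely, we will aim to prove a pointwise kernel inequality of the form
\[
h_t \le C\sum_{j=0}^{J}\frac{1}{\ldots}\left(\tilde P\text{-type differences}\right),
\]
which we will realize as
\[
H_tf\le C\sum_{j} c_j\,\frac{1}{\delta_j t}\int_{t}^{(1+\delta_j)t}H_sf\,ds + (\text{small error}).
\]
Each such window average is a difference of two ergodic averages:
\[
\frac{1}{\delta t}\int_t^{(1+\delta)t}H_s\,ds=\frac{(1+\delta)\tilde P_{(1+\delta)t}-\tilde P_t}{\delta},
\]
so its supremum over $t$ is at most $\frac{2+\delta}{\delta}\sup_s\tilde P_s f$. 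This costs $1/\delta$. We therefore cannot take $\delta$ as small as $1/\sqrt n$, which would only recover the $O(\sqrt n)$ bound.

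The main obstacle, and the heart of the argument, is to beat $1/\delta$. We would try to exploit that $s\mapsto h_s(x)$ for fixed $x$ is unimodal in $s$, with maximum at $s=|x|^2/(2n)$. Whenever $t\le |x|^2/(2n)$, the value $h_t(x)$ is at most the average of $h_s(x)$ over $[t,T]$ for any $T$ up to roughly the mode, which gives domination by ergodic-type averages essentially for free. Only the regime $t>|x|^2/(2n)$, the decreasing side, costs anything. There we would decompose dyadically in the ratio $t/s_{\max}(x)$ and use the explicit Gaussian decay, $h_t(x)/h_{s}(x)\le (s/t)^{n/2}$ up to exponentials. This makes the contribution of each dyadic shell geometrically small once $t/s_{\max}\ge 1+C\log n/n$. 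The intermediate band, of width about $\log n/n$ in relative time, is then covered by about $\log n$ dyadic comparisons, each handled with an $O(1)$ loss via the weak-type bound for $\sup_t\tilde P_t$.

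We will sum these pieces using the quasi-triangle inequality in $L^{1,\infty}$. If this is lossy, we will instead use Stein's maximal theorem for the symmetric diffusion semigroup on $L^{1,\infty}$-normable combinations, namely a Kolmogorov/Stein--Weiss-type summation lemma for sums of nonnegative operators with weak-type $O(1)$ bounds, which costs a $\log$ of the number of terms. Either route yields $C\log(1+n)$. I expect this band decomposition, and verifying that the weights sum correctly, to be the technical core.
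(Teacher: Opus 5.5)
Your plan has a genuine gap, and it sits at the first step rather than in the summation. No pointwise domination of $H_tf$ by positive combinations of ergodic averages can lose less than a factor $\sqrt n$, however the time windows are arranged.

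Fix $t$ and a point $y$ with $|y|^2=2nt$, i.e.\ on the shell where $h_t$ concentrates. There $s\mapsto h_s(y)$ peaks at $s=t$, with $h_{t(1+\epsilon)}(y)/h_t(y)=e^{-n\epsilon^2/4+O(n\epsilon^3)}$, so the peak has relative width $n^{-1/2}$. The exact identity $\int_0^\infty h_s(y)\,ds=\Gamma(\frac n2-1)/(4\pi^{n/2}|y|^{n-2})$ together with Stirling gives $\int_0^\infty h_s(y)\,ds\lesssim t\,n^{-1/2}h_t(y)$. Write $\tilde p_T=\frac1T\int_0^Th_s\,ds$ for the ergodic kernel. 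Then $\tilde p_T(y)\lesssim n^{-1/2}h_t(y)$ for every $T>0$: for $T\ge t/2$ this follows from the previous bound, and for $T<t/2$ it holds because $s\mapsto h_s(y)$ increases on $(0,t]$, so $\tilde p_T(y)\le h_{t/2}(y)=(2/e)^{n/2}h_t(y)$.

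Your window averages are combinations of exactly this kind, since $\frac1{\delta t}\int_t^{(1+\delta)t}h_s\,ds\le\frac{1+\delta}{\delta}\tilde p_{(1+\delta)t}$. The constant you pay is at least the total coefficient $\sum_jc_j(1+\delta_j)/\delta_j$. Evaluating $h_t\le\sum_jc_j\frac{1+\delta_j}{\delta_j}\tilde p_{(1+\delta_j)t}$ at such $y$ forces this sum to be $\gtrsim\sqrt n$. Any splitting of $h_t$ according to the value of $|x-y|$ still produces, after summation, one positive combination dominating $h_t$, so the obstruction applies to it too.

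Equivalently, testing on an approximate identity gives $\mathcal{H}_{\ast}\delta_0(x)\gtrsim\sqrt n\,\sup_T\tilde P_T\delta_0(x)$ at every $x\neq0$. So an inequality $\mathcal{H}_{\ast}f\le C\log n\,\sup_T\tilde P_Tf$ is simply false. Moving the shell into a ``small error'' just relocates the whole problem.

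Your supporting claims break down at exactly this point:
\begin{itemize}
\item At the mode the first-order terms of $-\frac n2\log s-\frac{|y|^2}{4s}$ cancel. The factor $(s/t)^{n/2}$ is therefore almost exactly compensated by the Gaussian factor, and nothing becomes small until $t/s_{\max}-1\gg n^{-1/2}$, not $\log n/n$.
\item The unimodality step is not free: converting an average over $[t,T]$ into ergodic averages costs $T/(T-t)$.
\item Separately, the Stein--Weiss lemma costs about $N\log N$ for $N$ summands of weak norm $O(1)$, not $\log N$.
\end{itemize}

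The missing ingredient is cancellation. The paper never compares $\mathcal{H}_{\ast}$ with the ergodic maximal function pointwise.
\begin{itemize}
\item For an atomic datum it takes a partial balayage at height $\kappa=\alpha/(2\log(e\Lambda))$, $\Lambda=An$. This writes $\nu=\mu+\sigma$ with $\mu=\kappa\mathbf 1_\Omega$ (so $H_t\mu\le\kappa$ and $|\Omega|=\mathfrak m/\kappa$) and $\sigma=-\Delta u$.
\item Off $\Omega$ the ergodic average of $\sigma$ has a favorable sign, $\tilde P_t\sigma=-H_tu/t\le0$, so it can be discarded. Only the residual $R_t\sigma=H_t\sigma-\tilde P_t\sigma$ remains, whose log-time multiplier $r(\xi)=2\pi i\xi/(1+2\pi i\xi)$ vanishes at $\xi=0$.
\item The Mellin frequencies $|\xi|>An$ are controlled in $L^1(\Omega^c)$ by the Gamma-function decay from Lemma~\ref{lem:mellin}.
\item The frequencies $|\xi|\le An$ are controlled in $L^2(\Omega^c)$ through the Abel-power square function bound of Theorem~\ref{thm:resolvent-square}. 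The $\log n$ comes from summing over roughly $\log n$ dyadic Mellin bands and then applying Chebyshev's inequality.
\end{itemize}
These sign and $L^2$-orthogonality mechanisms are exactly what a positive pointwise kernel comparison cannot see.
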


Theorem \ref{thm:main} contains the main difficulty in our argument; we complete the introduction by outlining its proof.  It suffices to treat the case $n \geq 3$; see Section \ref{main}. By a standard approximation argument, it suffices to consider a finite positive atomic measure $\nu$ of mass $\mathfrak{m}$.  Following our previous work \cite{OuyangSpectorStockdale}, we utilize a partial balayage approach.  Whereas our earlier work used fractional Laplacian partial balayage, the present argument uses the classical Laplacian partial balayage, corresponding to the second-order case.  Thus, we recall that the cap-$\kappa$ Laplacian partial balayage of $\nu$ has the form
\[
 \nu=\kappa\mathbf{1}_{\Omega}\,d x-\Delta u,
\]
where $u$ is nonnegative and $\Omega:=\{x \in \mathbb{R}^n\colon u(x)>0\}$ satisfies $|\Omega|=\frac{\mathfrak{m}}{\kappa}$. For convenience, we introduce the notation
\begin{align*}
\sigma:=\nu-\kappa\mathbf{1}_{\Omega}\,d x=-\Delta u.
\end{align*}
The term $\mu:=\kappa\mathbf{1}_{\Omega}\,d x$ is harmless; 
as we will see in the sequel, it is convenient to choose whether to estimate operators applied to $\nu$ or $\sigma$, depending on the regime.  To make this precise, for $t>0$ we further introduce the residual operator
\begin{align*}
 R_t\sigma:=H_t\sigma-\tilde{P}_t\sigma,
\end{align*}
where $\tilde{P}_t\sigma$ is the ergodic averaging defined in \eqref{ergodic_maximal}.  Then
\begin{align}\label{goodandbad}
H_t\nu = H_t\mu+  \tilde{P}_t\sigma + R_t \sigma,
\end{align}
with $\mu \geq 0$ and
\begin{equation}
 \tilde{P}_t\sigma(x)=-\frac{H_tu(x)}t\le0
 \label{subordinated_sign}
\end{equation}
for $x \in {\Omega^c}$.
The identity \eqref{subordinated_sign} is a simple consequence of the relation $\partial_sH_su=-H_s\sigma$, the fundamental theorem of calculus, and the fact that $u(x)=0$ for $x \in {\Omega^c}$.

Our goal is to ultimately obtain a bound for $\mathcal{H}_{\ast}\nu$ in the space $L^{1,\infty}(\mathbb{R}^n)$.   To accomplish this, for a fixed $\alpha>0$, one introduces a balayage cap $\kappa=\kappa(\alpha)$ for which an application of the partial balayage yields a corresponding balayage domain $\Omega=\Omega(\alpha)$ and odometer $u$.  Division of the space into the balayage domain and its complement yields
\begin{align}\label{Omega_omega_c}
\alpha\,|\{ \mathcal{H}_{\ast}\nu>\alpha\}| \leq \alpha\,|\{x \in \Omega\colon \mathcal{H}_{\ast}\nu(x)>\alpha\}|  + \alpha\,|\{x \in \Omega^c\colon \mathcal{H}_{\ast}\nu(x)>\alpha\}|.
\end{align}
For the first term on the right-hand side of \eqref{Omega_omega_c}, there is nothing to estimate, as the size bound for $|\Omega|$ obtained in the balayage is sufficient.  For the second term, we use the fact that for $x \in \Omega^c$, the relation \eqref{goodandbad} and the sign of $\tilde{P}_t\sigma$ yield the upper bound
\begin{align*}
 \mathcal{H}_{\ast}\nu(x) \le \mathcal{H}_{\ast}\mu(x) + \sup_{t>0}|R_t \sigma(x)|.
\end{align*}
With $\kappa = \kappa(\alpha)$ suitably chosen, the fact that $\mathcal{H}_{\ast}\mu \leq \kappa$ allows one to reduce the question to a weak-type $(1,1)$ estimate for the residual maximal operator:  It remains to prove that
\begin{align}\label{residual}
\alpha\,\Big|\Big\{x \in \Omega^c\colon \sup_{t>0}|R_t \sigma(x)|>\alpha\Big\}\Big| \lesssim  \log(1+n) \mathfrak m.
\end{align}

Estimates for the left-hand side of \eqref{residual} seem to be non-trivial, and the argument becomes more involved at this point.  For $x \in {\Omega^c}$, consider the logarithmic-time
reparametrizations:
\begin{align}
 R_x(v)&:=R_{e^v}\sigma(x),\nonumber\\
 F_{\eta,x}(v)&:=H_{e^v}\eta(x),  \label{log_sigma}
\end{align}
where $\eta = \sigma, \mu, \text{ or } \nu$ and note that
\[
\sup_{t>0}|R_t \sigma(x)| =  \sup_{v \in \mathbb{R}} |R_x(v)|.
\]
Our choice to place the $x$ variable as a subscript stems from the usefulness of the Fourier transform in logarithmic time in establishing our estimates, for which we use the convention
\begin{align*}
 \widehat  F_{\eta,x}(\xi)&=\int_{\mathbb{R}}e^{-2\pi i\xi v} F_{\eta,x}(v)\,d v
\end{align*}
and
\begin{align*}
  F_{\eta,x}(v)&=\int_{\mathbb R}e^{2\pi i\xi v}\widehat  F_{\eta,x}(\xi)\,d\xi
\end{align*}
for the inverse Fourier transform, whenever inversion is justified.

The logarithmic change of variables and Fourier transform is equivalent to taking the Mellin transform, though we find the former viewpoint more comfortable.  
In particular, the Fourier transform in logarithmic time yields
several important identities. First, as justified in the proof of
Theorem~\ref{thm:main}, for almost every $x\in\Omega^c$ the
logarithmic-time orbits satisfy
 \begin{equation*}
 \widehat R_x(\xi)=r(\xi)\widehat F_{\sigma, x}(\xi),
 \qquad
 r(\xi)=\frac{2\pi i\xi}{1+2\pi i\xi}.
\end{equation*}
Second, if one defines the Abel mean associated with $-\Delta$ by
 \begin{align}\label{Abel}
 A_s:=s(s-\Delta)^{-1}
     =\int_0^\infty se^{-st}H_t\,d t
\end{align}
for $s>0$, then the Fourier transform of the logarithmic time heat evolution of $\nu$ is related to the square function of $A_s^N:=\underbrace{A_s\circ A_s \circ \cdots \circ A_s}_{N \text{ times}}$, the Abel power of order $N$, in $L^2(\Omega^c)$:
\begin{align}\label{Abel-identity}
\int_{\Omega^c} \int_{\mathbb{R}}
 (2\pi\xi)^2 |M_N(\xi)|^2 |\widehat F_{\nu, x}(\xi)|^2\,d\xi\,d x =  \int_{\Omega^c} \int_0^\infty
 |s\partial_s(A_s^N\nu)(x)|^2\frac{d s}{s}\,d x,
 \end{align}
 where 
\begin{align*}
 |M_N(\xi)|^2 := \prod_{q=0}^\infty \left(1+\frac{(2\pi\xi)^2}{(N+q)^2}\right)^{-1}.
\end{align*}
These facts play an important role in our estimates for the high and low Mellin frequencies 
\begin{align}
 Q_\Lambda(x)&:=\int_{|\xi|>\Lambda} |\widehat F_{\sigma,x}(\xi)|\,d\xi, \label{high}\\
 T_\Lambda(x)&:=\sup_{v\in\mathbb R}
 \left|\int_{|\xi|\le\Lambda}  e^{2\pi i\xi v}r(\xi)\widehat F_{\sigma,x}(\xi)\,d\xi\right|,\label{low}
\end{align}
which, as proved in the proof of Theorem~\ref{thm:main}, yield the
pointwise estimate
\begin{align}
\sup_{v\in\mathbb R}|R_x(v)|
\leq Q_\Lambda(x)+T_\Lambda(x)
\label{R_bound}
\end{align}
for almost every $x\in\Omega^c$.

We control the terms involving \eqref{high} and \eqref{low} by different arguments. 
The bound for \eqref{high} is somewhat simpler, as the Mellin formula recorded in Lemma \ref{lem:mellin} -- which is an exact computation for the Fourier transform of \eqref{log_sigma} -- and the application of several sophisticated identities for the Euler Gamma function yield the estimate
\begin{equation}
 \int_{\Omega^c} Q_\Lambda(x)\,d x= \int_{\Omega^c}\int_{|\xi|>\Lambda} |\widehat F_{\sigma,x}(\xi)|\,d\xi  \,d x  \lesssim C^n \Lambda^2e^{-c\Lambda} \mathfrak m
 \label{high-part}
\end{equation}
for universal constants $c,C>0$ and $\Lambda>n$.  To kill the exponential growth of $C^n$, one takes $\Lambda =An$ for a suitable choice of $A>1$.

As might be expected, one must pay for this choice of $\Lambda$ in the estimate for \eqref{low}.  Here, we go through an $L^2(\Omega^c)$ estimate and show that
\begin{equation}
 \int_{\Omega^c} T_\Lambda(x)^2\,d x
 \lesssim \log^2(e\Lambda)\,\kappa\mathfrak m,
 \label{remainder-max}
\end{equation}
which is where the bound loses a logarithm of the dimension.  The inequality \eqref{remainder-max} is argued in several steps.  First, we establish an estimate for the square function of the Abel power of order $N$ for $\nu$ that appears on the right-hand side of \eqref{Abel-identity}.  This estimate obtains an upper bound for this square function that is linear in $\kappa$ and with square-root dependence in $N$.  This is the most technically challenging part of the paper, and, as it may be of independent interest, we record it here.\looseness=-1
\begin{theorem}
\label{thm:resolvent-square}
If $n\ge3$ and $\nu=\sum_{i=1}^M m_i\delta_{x_i}$ is a finite positive atomic measure of mass $\mathfrak{m}$ with partial balayage at height $\kappa>0$ given by
\[
 \nu=\kappa\mathbf{1}_\Omega\,d x-\Delta u,
\]
where $u$ is nonnegative and $\Omega=\{x \in \mathbb{R}^n\colon u(x)>0\}$, then 
\begin{equation}\label{eq:exterior-abel-square}
 \int_{\Omega^c} \int_0^\infty  |s\partial_s(A_s^N\nu)(x)|^2\frac{d s}{s}\,dx \lesssim \sqrt N\,\kappa\mathfrak m
\end{equation}
for all $N \in \mathbb{N}$.
\end{theorem}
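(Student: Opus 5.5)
The plan is to integrate by parts in the scale variable, which converts the square function into a product of a pointwise bound on $A_s^N\nu$ over $\Omega^c$ and an $L^1$ bound on a second scale-derivative. Fix $x\in\Omega^c$ and set $f(s):=A_s^N\nu(x)\ge0$. On the spectral side $A_s$ has multiplier $a:=s/(s+\lambda)$, and $s\partial_s a=a(1-a)$. Hence
\[
s\partial_s A_s^N=N A_s^N(I-A_s),
\]
and $(s\partial_s)^2A_s^N$ is a combination of $A_s^N,A_s^{N+1},A_s^{N+2}$ with coefficients $O(N^2)$. Since $f(s)\to0$ as $s\to0$, and $s\partial_s f\to 0$ at both ends for atomic $\nu$ and $x$ off the atoms (this needs checking), integration by parts in $\frac{ds}{s}$ gives
\[
\int_0^\infty |s\partial_s f|^2\frac{ds}{s}=-\int_0^\infty f\,(s\partial_s)^2 f\,\frac{ds}{s}\le \Big(\sup_{s>0} f(s)\Big)\int_0^\infty |(s\partial_s)^2 A_s^N\nu(x)|\frac{ds}{s}.
\]
The proof then reduces to two estimates:
\[
\text{(a)}\ \ \sup_{s>0}A_s^N\nu(x)\lesssim\kappa \ \text{ for } x\in\Omega^c,\qquad
\text{(b)}\ \ \int_{\mathbb R^n}\int_0^\infty |(s\partial_s)^2A_s^N\nu(x)|\frac{ds}{s}\,dx\lesssim\sqrt N\,\mathfrak m .
\]

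For (b) I would reduce to a single Dirac mass by positivity and translation invariance. It then suffices to bound $\int\!\int|\partial_s(s\partial_s K_s(y))|\,ds\,dy$, where $K_s=s^N(s-\Delta)^{-N}\delta_0$ is the kernel of $A_s^N$. Writing
\[
K_s=\int_0^\infty \frac{s^Nt^{N-1}e^{-st}}{\Gamma(N)}\,h_t\,dt
\]
exhibits $K_s$ as a heat kernel averaged over a Gamma$(N)$ distribution in time. In logarithmic time this distribution has width $\approx N^{-1/2}$. For fixed $y$, the function $s\mapsto s\partial_sK_s(y)$ should have $O(1)$ sign changes; I would check this by a Descartes-type count on the explicit Gamma integral, or on the Bessel-function form of $K_s$. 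Its total variation in $s$ is then $\lesssim\sup_s|s\partial_sK_s(y)|$. Each $s\partial_s$ applied to this log-time profile costs a factor $\sqrt N$, because $s\partial_s$ of the Gamma density is $N$ times a difference of adjacent Gamma densities, and these differ by $O(N^{-1/2})$ in relative mass. So I expect
\[
\int_{\mathbb R^n}\sup_s|s\partial_sK_s(y)|\,dy\lesssim\sqrt N,
\]
uniformly in $n\ge3$. This uniformity would come from the radial structure and the concentration of $|y|^2$ around $2nt$ under $h_t$, and I would verify it by a direct computation in polar coordinates.

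For (a) I would use the balayage structure $\nu=\mu+\sigma$ with $\sigma=-\Delta u$, $u\ge0$ and $u=0$ on $\Omega^c$. The $\mu$ part is harmless, since $A_s^N\mu\le\kappa$. For the $\sigma$ part, the identity $A_s(-\Delta u)=s(u-A_su)$ gives $A_s\sigma=-sA_su\le0$ on $\Omega^c$. The difficulty is iteration: $A_s^N\sigma=s(A_s^{N-1}u-A_s^Nu)$, and $A_s^{N-1}u$ no longer vanishes on $\Omega^c$. I would first try a maximum-principle argument. The function $w=A_s^N\nu-\kappa$ solves $(s-\Delta)^N w=s^N(\nu-\kappa)$. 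I would show inductively that each $A_s^k\nu-\kappa$ is dominated on $\Omega^c$ by a subsolution vanishing there, using that $\Omega$ is the coincidence-free set of the obstacle problem $\min(u,\;\nu-\kappa+\Delta u)=0$ in the weak sense.

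If pointwise domination fails, the fallback is to keep only the weaker estimate
\[
\int_{\Omega^c}\sup_s A_s^N\nu\;\cdot\;\int_0^\infty|(s\partial_s)^2A_s^N\nu|\,\frac{ds}{s}\,dx .
\]
I would bound it by splitting $\nu=\mu+\sigma$. The $\mu$ contribution is handled by the spectral computation
\[
\int_0^\infty N^2a^{2N}(1-a)^2\frac{ds}{s}=O(1),
\]
which gives $\lesssim\kappa^2|\Omega|=\kappa\mathfrak m$. The $\sigma$ contribution would be handled by transferring the derivative onto $u$ via $(-\Delta)A_s=s(I-A_s)$ and using $\int u\lesssim$ (mass)$\times$(scale of $\Omega$).

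I expect step (a) to be the main obstacle: the exterior bound $A_s^N\nu\lesssim\kappa$ on $\Omega^c$, uniformly in $N$ and $s$. Step (b) is where the $\sqrt N$ originates and is a concrete, if tedious, kernel computation.
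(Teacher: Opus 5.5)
Your integration by parts in $ds/s$ is harmless. The problem is that both estimates you reduce to, (a) and (b), are false in the form you need, so the scheme cannot be completed.

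\textbf{Estimate (b).} It fails for every $N$, already for $\nu=\delta_0$, and the reason is scaling. The kernel of $A_s^N$ satisfies $K_s(y)=s^{n/2}K_1(\sqrt s\,y)$, and $ds/s$ is dilation invariant. Hence both $y\mapsto\int_0^\infty|(s\partial_s)^2K_s(y)|\,\frac{ds}{s}$ and $y\mapsto\sup_s|s\partial_sK_s(y)|$ are radial and homogeneous of degree $-n$. So each equals $c_{N,n}|y|^{-n}$ with $c_{N,n}>0$, which is not integrable at $0$ or at $\infty$. For a general atomic $\nu$ the same $\mathfrak m|x|^{-n}$ tail appears at infinity, so restricting to $\Omega^c$ does not help. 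This is the familiar failure of $L^1$ bounds for Littlewood--Paley square functions. The estimate must be proved genuinely in $L^2$, with $\kappa\mathfrak m$ appearing as $\|\mu\|_2^2=\kappa^2|\Omega|$, not as an $L^\infty$ bound times an $L^1$ bound. Your fallback keeps the product pointwise, so it is finite, but you give no mechanism to estimate it; it is essentially the original problem.

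\textbf{Estimate (a).} Take $s=N/t$ and let $N\to\infty$. The Gamma subordination concentrates at time $t$, so $A_{N/t}^N\nu(x)\to H_t\nu(x)$ for $x\in\Omega^c$. Thus (a) with a constant independent of $N$ would give $\mathcal H_\ast\nu\lesssim\kappa$ on $\Omega^c$. Combined with $|\Omega|=\mathfrak m/\kappa$, that is a dimension-free weak-type bound for $\mathcal H_\ast$, which the paper explicitly leaves open. In fact (a) fails. Let $\nu$ be uniform of mass $m$ on $\partial B(0,1)$, or a fine atomic approximation of it. The radial odometer shows that $\Omega$ is a shell $\{R_1<|x|<R_2\}$ with $R_1>0$ whenever $m<\kappa\omega_n(n/2)^{n/(n-2)}$. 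On the other hand, $\sup_tH_t\nu(0)=m\,(n/(2\pi e))^{n/2}$, which near this threshold is $\approx\kappa\sqrt n/(2\sqrt\pi)$ for large $n$.

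\textbf{The missing idea.} The paper compares $A_s$ with the killed Abel mean $B_s=s(s+K)^{-1}$, where $K=-\Delta_\Omega+\kappa/u$. Since $Ku=\nu$ and $B_s^N\nu$ vanishes off $\Omega$, one gets the exact identity $\mathbf 1_{\Omega^c}A_s^N\nu=\mathbf 1_{\Omega^c}\sum_{j<N}A_s^{N-j}\rho_j(s)$. The absorption layers satisfy $0\le\rho_j(s)\lesssim(j+1)^{-1/2}\mu$ and $\int_0^\infty\|\rho_j(s)\|_1\frac{ds}{s}=\mathfrak m/j$. Pointwise this gives only $A_s^N\nu\lesssim\sqrt N\kappa$ on $\Omega^c$, which in your scheme would cost an extra $\sqrt N$ even if (b) held. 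What it does give is the layer energies $\int_0^\infty\|\rho_j(s)\|_2^2\frac{ds}{s}\lesssim\kappa\mathfrak m\,j^{-3/2}$. The $L^2(\mathbb R^n)$ square function of $(A_s^N-B_s^N)\nu$ is then closed by Minkowski's inequality on the early layers and by a Schur test, via the Beta identity, on the late ones.
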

\noindent
We remark that the appearance of $\kappa$ on the right-hand side is through its relation to $\Omega^c=\Omega^c(\kappa)$.

Second, the combination of Theorem \ref{thm:resolvent-square}, the identity \eqref{Abel-identity}, and the fact that
\[
 |M_N(\xi)|\gtrsim 1
\]
when $|\xi|\lesssim \sqrt{N}$ yield an estimate for the low Mellin frequencies of $\widehat F_{\nu,x}(\xi)$:  
\begin{equation}
 \int_{\Omega^c} \int_{|\xi|\le B}
 (2\pi\xi)^2|\widehat F_{\nu,x}(\xi)|^2\,d\xi\,d x
 \lesssim B\,\kappa\mathfrak{m}
 \label{lowMellin_nu}
\end{equation}
for $B\ge 1$. This can be transferred to $\widehat F_{\sigma,x}(\xi)$ because of harmless estimates for $\mu$ (or in this case, a similar expression to \eqref{lowMellin_nu} for $\widehat F_{\mu,x}(\xi)$).  Finally, summation over $\approx\log(\Lambda)$ dyadic scales yields the logarithmic loss in dimension asserted in \eqref{remainder-max}.  As above, the estimates which underlie \eqref{high-part} and \eqref{remainder-max} may be of independent interest, and we therefore summarize them in the following theorem expressed in terms of the logarithmic time orbits of $\sigma$.
\begin{theorem}
\label{thm:exterior}
Let $n\ge3$ and $\nu=\sum_{i=1}^M m_i\delta_{x_i}$ be a finite positive atomic measure of mass $\mathfrak{m}$ with partial balayage at height $\kappa>0$ given by
\[
 \nu=\kappa\mathbf{1}_\Omega\,d x-\Delta u,
\]
where $u$ is nonnegative and $\Omega=\{x \in \mathbb{R}^n\colon u(x)>0\}$. If $\sigma=\nu-\kappa\mathbf{1}_\Omega\,d x=-\Delta u$ and $F_{\sigma,x}(v)=H_{e^v}\sigma(x)$ for $x\in {\Omega^c}$, then 
\begin{equation}
 \int_{\Omega^c}\int_{|\xi|>An}
 |\widehat F_{\sigma,x}(\xi)|\,d\xi\,d x
 \lesssim \mathfrak{m}
 \label{eq:frequency-tail-intro}
\end{equation}
for any $A>1$ sufficiently large, and
\begin{equation}
 \int_{\Omega^c} \int_{|\xi|\le B}
 (2\pi\xi)^2|\widehat F_{\sigma,x}(\xi)|^2\,d\xi\,d x
 \lesssim B\,\kappa\mathfrak{m}
 \label{eq:frequency-prefix-intro}
\end{equation}
for every $B \geq 1$.
\end{theorem}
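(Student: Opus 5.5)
We prove the two estimates separately. \eqref{eq:frequency-tail-intro} rests on an exact Mellin computation. \eqref{eq:frequency-prefix-intro} is obtained from Theorem~\ref{thm:resolvent-square} together with the identity \eqref{Abel-identity}.

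\emph{High frequencies.} For a point mass $\delta_y$ and $x\neq y$, set $\rho=|x-y|^2/4$. Then $F_{\delta_y,x}(v)=(4\pi e^v)^{-n/2}e^{-\rho e^{-v}}$, and substituting $w=\rho e^{-v}$ gives
\[
\widehat F_{\delta_y,x}(\xi)=(4\pi)^{-n/2}\rho^{-n/2-2\pi i\xi}\,\Gamma(n/2+2\pi i\xi).
\]
This is the content of Lemma~\ref{lem:mellin}. By linearity, for the Lebesgue part $\kappa\mathbf 1_\Omega\,dx$ we integrate in $y$. Therefore, for $x\in\Omega^c$,
\[
|\widehat F_{\sigma,x}(\xi)|\le |\Gamma(n/2+2\pi i\xi)|\,(4\pi)^{-n/2}\int \big(|x-y|^2/4\big)^{-n/2}\,d|\sigma|(y).
\]
This prefactor is not integrable in $x$, since it is a Riesz potential of order $0$. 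So I would not bound $|\sigma|$ crudely. Instead I would use $\sigma=-\Delta u$ and integrate by parts in $y$ against $\rho^{-n/2-2\pi i\xi}$. Two integrations by parts replace the kernel by $\Delta_y|x-y|^{-n-4\pi i\xi}$, which is of size $|\xi|^2|x-y|^{-n-2}$; this is where the $\Lambda^2$ in \eqref{high-part} comes from. The resulting kernel is integrated against $u$, which is supported in $\Omega$. For $x\in\Omega^c$ I would then use Fubini to integrate over $x$ first.

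The factor $\int_{\Omega^c}|x-y|^{-n-2}\,dx$ is still singular as $y\to\partial\Omega$. I would handle it with the standard odometer bounds $u(y)\lesssim \kappa\,\mathrm{dist}(y,\Omega^c)^2$ and $\int u\lesssim \mathfrak m\,\cdot$ (a scale). Alternatively, one can split the $y$-integral near and far from $x$, using $\nu$ directly near the atoms and the Lebesgue part elsewhere. Either way the expected outcome is $\int_{\Omega^c}|\widehat F_{\sigma,x}(\xi)|\,dx\lesssim C^n|\xi|^2\,|\Gamma(n/2+2\pi i\xi)|/\Gamma(n/2)\cdot\mathfrak m$. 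Stirling's formula gives
\[
|\Gamma(n/2+2\pi i\xi)|/\Gamma(n/2)\lesssim e^{-c|\xi|}
\]
once $|\xi|\gtrsim n$, up to polynomial factors. Integrating over $|\xi|>\Lambda$ yields \eqref{high-part}, and the choice $\Lambda=An$ with $A$ large absorbs $C^n$.

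\emph{Low frequencies.} Write $\sigma=\nu-\mu$, so $\widehat F_\sigma=\widehat F_\nu-\widehat F_\mu$, and it suffices to prove \eqref{eq:frequency-prefix-intro} separately for $\nu$ and for $\mu$. For $\nu$, pick $N\approx B^2$. The product defining $|M_N(\xi)|^{-2}$ is bounded by
\[
\exp\Big(\sum_q (2\pi\xi)^2/(N+q)^2\Big)\le \exp\big(C\xi^2/N\big),
\]
so $|M_N(\xi)|\gtrsim1$ for $|\xi|\le B$. Combining \eqref{Abel-identity} with Theorem~\ref{thm:resolvent-square} then gives the bound $\sqrt N\kappa\mathfrak m\approx B\kappa\mathfrak m$.

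For $\mu$, I would argue via Plancherel in $v$:
\[
\int (2\pi\xi)^2|\widehat F_{\mu,x}|^2\,d\xi=\int_0^\infty |t\partial_tH_t\mu(x)|^2\,\frac{dt}{t}.
\]
Since $0\le\mu\le\kappa$, the integral of this over $x$ is controlled by the $L^2$ square-function bound
\[
\lesssim \|\mu\|_2^2=\kappa^2|\Omega|=\kappa\mathfrak m,
\]
which is dimension-free by spectral calculus. This term is even bounded uniformly in $B$.

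The main obstacle is the high-frequency step, specifically making the spatial integral over $\Omega^c$ finite with only $C^n$ loss. This requires integrating by parts against the odometer rather than bounding $|\sigma|$, together with correct Gamma-function asymptotics uniform in both $n$ and $\xi$. The low-frequency part is essentially a corollary of Theorem~\ref{thm:resolvent-square}.
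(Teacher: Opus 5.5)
\paragraph{The gap.} It lies in the high-frequency step, exactly where you locate the main obstacle. You move $\Delta$ onto the Mellin kernel and integrate in $x$ first. The factor $\frac{n\omega_n}{2}$ from $\int_{|x-y|\ge d(y)}|x-y|^{-n-2}\,dx$ then combines with $\pi^{-n/2}$ to give exactly $1/\Gamma(n/2)$, and everything reduces to
\[
 \int_\Omega\frac{u(y)}{d(y)^2}\,dy\le C^n\mathfrak m,
 \qquad d(y)=\operatorname{dist}(y,\Omega^c).
\]
Neither of your suggested justifications gives this.
\begin{itemize}
\item As a global pointwise bound, $u(y)\lesssim\kappa\,d(y)^2$ is false. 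Near each atom $u$ carries the Newtonian singularity $m_i\Gamma(\cdot-x_i)$ and tends to $+\infty$, while $d\ge\delta_\nu>0$ there. Obstacle-problem quadratic growth is only available on balls avoiding the atoms.
\item A global bound on $\int u$ cannot substitute. $\Omega$ may consist of pieces of wildly different sizes (atoms of very different masses far apart), so no single scale controls $\int_\Omega u\,d^{-2}$.
\item A near/far split that pairs $\nu$ and the Lebesgue part separately with the order-zero kernel loses the cancellation $\sigma(\mathbb R^n)=0$. Each piece then decays only like $|x|^{-n}$, which is not integrable over $\Omega^c$.
\end{itemize}

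\paragraph{The missing idea.} You need an \emph{averaged} quadratic growth centered at exterior points. Since $\Delta u=\kappa\mathbf 1_\Omega-\nu\le\kappa$ on all of $\mathbb R^n$, atoms included, for every $z\in\Omega^c$ the function $u-\frac{\kappa}{2n}|\cdot-z|^2$ is superharmonic and vanishes at $z$. Hence $\int_{B(z,r)}u\le\frac{\kappa\omega_n}{2(n+2)}r^{n+2}$ (Lemma~\ref{lem:moment}). Decompose $\Omega$ into Whitney cubes $Q$ with $\operatorname{dist}(Q,\Omega^c)\approx\operatorname{diam}Q$, and apply this bound at a nearest exterior point. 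This gives $\int_Q u\,d^{-2}\le C^n\kappa|Q|$, using $\omega_n n^{n/2}\le C^n$. Summing yields $C^n\kappa|\Omega|=C^n\mathfrak m$. This is Proposition~\ref{finite_P}. With it in hand, your Gamma-function decay for $|\xi|\ge An$ and the choice of $A$ large finish \eqref{eq:frequency-tail-intro} as in the paper.

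\paragraph{The low-frequency part.} Your argument coincides with the paper's. You split $\sigma=\nu-\mu$, take $N\approx B^2$ so that $|M_N(\xi)|\gtrsim1$ for $|\xi|\le B$, and apply Theorem~\ref{thm:resolvent-square}. You then bound the $\mu$-part by $\frac14\|\mu\|_2^2=\frac14\kappa\mathfrak m$ via logarithmic-time Plancherel. Two points still have to be supplied.
\begin{itemize}
\item \eqref{Abel-identity} is only announced in the introduction; it is actually proved inside this proof. Set $G_{N,x}(v)=A^N_{Ne^{-v}}\nu(x)=\int_0^\infty\Phi_N(r)F_{\nu,x}(v+\log r)\,dr$ with $\Phi_N(r)=\frac{N^N}{\Gamma(N)}r^{N-1}e^{-Nr}$. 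This representation gives $\widehat G_{N,x}=M_N\widehat F_{\nu,x}$ and $\partial_vG_{N,x}=-s\partial_s(A_s^N\nu)(x)$ at $s=Ne^{-v}$. You should carry out this computation rather than cite the identity.
\item Using $\widehat F_{\sigma,x}=\widehat F_{\nu,x}-\widehat F_{\mu,x}$ as classical Fourier transforms requires $F_{\nu,x},F_{\sigma,x}\in L^1(\mathbb R)$ for a.e.\ $x\in\Omega^c$. The first follows from $\delta_\nu>0$. The second follows from $P(x)=\int_\Omega u(y)|x-y|^{-n-2}\,dy<\infty$ a.e., i.e.\ again from Proposition~\ref{finite_P}.
\end{itemize}
The same finiteness of $P$ justifies the Fubini interchange in the Mellin formula at free-boundary points. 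That is why the paper writes $H_t\sigma=-\Delta H_tu$ before transforming, rather than pairing $\sigma$ with $|x-y|^{-n-4\pi i\xi}$ directly.
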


We do not know whether it is possible to remove the logarithmic dependence on dimension in our Theorem \ref{thm:main}.  One approach would be to improve Theorem \ref{thm:resolvent-square} with dependence $N^{1/2-\epsilon}$, though we were not able to do so with our methods.  In any case, a dimension-free bound for the heat maximal operator would give a bound $\sqrt n$ for the Hardy--Littlewood maximal operator, and this is the best achievable by our method because of the optimality\footnote{One can verify this by taking an approximation of the identity, or by working
directly with measures and taking $\nu=\delta_0$ in the inequality.} of the transfer in \eqref{prop:ball-heat-intro}.  

The plan of the paper is as follows.   In Section \ref{sec:reduction}, we recall the requisite details concerning the Laplacian partial balayage of a purely atomic measure and establish several facts concerning the outputs of the partial balayage theorem.  In Section \ref{sec:resolvent}, we develop the necessary machinery and give the proof of Theorem \ref{thm:resolvent-square}.  In Section \ref{sec:mellin}, we gather the results developed in the preceding sections to give a proof of Theorem \ref{thm:exterior}.  In Section \ref{lowfreq_log}, we derive \eqref{remainder-max} as a direct consequence of Theorem \ref{thm:exterior}.  We conclude the paper in Section \ref{main} by proving Theorems \ref{cor:hl-main} and \ref{thm:main}.

\section{Atomic reduction and Laplacian partial balayage}
\label{sec:reduction}

It is well-known that to establish a weak-type $(1,1)$ estimate it suffices to prove the bound for \emph{finite positive atomic measures}, i.e., a finite sum
\[
 \nu=\sum_{i=1}^M m_i\delta_{x_i},\qquad m_i>0;
\]
see, e.g. \cite[Theorem 1]{MenarguezSoria1992}.  Note that the supports of the measures we consider are finite, hence contained in a compact set.

In this paper, we use the following classical form of partial balayage.  In the classical setting, partial balayage is a second-order obstacle problem associated with a cap constraint.  We include the global Euclidean construction, using only the classical least-superharmonic-majorant
formulation and local obstacle regularity; see
\cite{KinderlehrerStampacchia,PetrosyanShahgholianUraltseva} for those
standard facts and \cite{GustafssonRoos} for partial-balayage terminology.
The statement below is a special case of the structure theorem for partial
balayage in Euclidean space,
\cite[Theorem~2.1 and (2.34), (2.40)--(2.41)]{GustafssonSakai} (with
$R=\mathbb{R}^n$ and density $p\equiv\kappa$ in the notation there; the
remark (2.41) covers measures which are singular with respect to Lebesgue
measure, hence atomic $\nu$); see also \cite{GardinerSjodin}.  We include a
proof for the convenience of the reader. \looseness=-1
\begin{proposition}
\label{prop:balayage}
If $n\ge3$, $\nu=\sum_{i=1}^M m_i\delta_{x_i}$ is a finite positive atomic measure of mass $\mathfrak{m}$, and $\kappa>0$, then there exist a nonnegative odometer $u$ with compact support and a bounded open set $\Omega=\{u>0\}$ containing every source atom in its interior with $|\Omega|=\frac{\mathfrak{m}}{\kappa}$ such that
\[
 \nu=\kappa\mathbf{1}_\Omega\,d x-\Delta u.
\]
In particular, $u\in L^1(\mathbb{R}^n)$ and its canonical potential representative vanishes on $\Omega^c$ and is continuous in a neighborhood of $\Omega^c$.  Further, 
\begin{align*}
 \delta_\nu:=\operatorname{dist}(\operatorname{supp}\nu,\Omega^c)>0,   
\end{align*}
and
\begin{align*}
 \Delta u=\kappa\mathbf{1}_\Omega-\nu\le\kappa                
\end{align*}
in the sense of distributions.
\end{proposition}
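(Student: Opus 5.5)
We construct the partial balayage through the classical least-superharmonic-majorant (obstacle) formulation, using the Newtonian potential available for $n\ge3$. Let $G(x)=c_n|x|^{2-n}$ be the fundamental solution of $-\Delta$, and set $U^\nu:=G\ast\nu=\sum_i m_iG(\cdot-x_i)$. This function is superharmonic, equal to $+\infty$ at the atoms, and decays at infinity.

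\emph{Construction.} Consider the class of functions $w$ with $w\le U^\nu$ and $-\Delta w\le\kappa$ in $\mathcal D'$. Equivalently, $w$ is such that $w+\frac{\kappa}{2n}|x|^2$ is subharmonic and lies below $U^\nu+\frac{\kappa}{2n}|x|^2$. Let $V$ be the largest such $w$ that tends to $0$ at infinity. A convenient normalization is to take the supremum over the class $w=G\ast\rho$ with $0\le\rho\le\kappa$ and $G\ast\rho\le U^\nu$. Then define $u:=U^\nu-V\ge0$ and $\Omega:=\{u>0\}$. Standard obstacle-problem reasoning (Perron's method, plus the fact that the obstacle is superharmonic with no extra mass off the atoms) gives three facts. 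First, $-\Delta V=\kappa$ on $\Omega$. Second, $-\Delta V=\nu$ on the interior of the coincidence set $\{u=0\}$. Third, since $\nu$ is singular while $-\Delta V\le\kappa$ is absolutely continuous, the coincidence set carries no mass of $-\Delta V$ off the atoms up to null sets. Together these show $-\Delta V=\kappa\mathbf 1_\Omega$. Hence
\[
-\Delta u=\nu-\kappa\mathbf 1_\Omega,
\]
which is the identity $\nu=\kappa\mathbf 1_\Omega-\Delta u$. The claim $\Delta u=\kappa\mathbf 1_\Omega-\nu\le\kappa$ is then immediate.

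\emph{Atoms in the interior and $\delta_\nu>0$.} Near each $x_i$, $U^\nu$ blows up while $V$ stays locally bounded, since $V=G\ast\rho$ with $\rho\le\kappa$ and $\rho$ compactly supported. So $u\to\infty$ at $x_i$, and by lower semicontinuity $u>0$ on a ball around each atom. Because there are finitely many atoms, $\delta_\nu>0$.

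\emph{Mass, boundedness and compact support.} Integrating the identity against test functions that equal $1$ on large balls gives $\kappa|\Omega|=\mathfrak m$, provided $u$ has compact support. For compact support, compare with the explicit radial solution for a single atom of mass $\mathfrak m$: it has $u$ supported in the ball of volume $\mathfrak m/\kappa$. Using the comparison principle for the obstacle problem (monotonicity of $V$ in the obstacle) together with the superposition bound $U^\nu\le\sum_i$ (single-atom potentials), I would show that $u$ vanishes outside $\bigcup_iB(x_i,R)$, where $R$ is the radius of the ball with volume $\mathfrak m/\kappa$. This gives compact support and boundedness of $\Omega$. Since $u$ is compactly supported and locally integrable (because $G\in L^1_{\rm loc}$), we get $u\in L^1$.

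\emph{Regularity.} Away from the atoms, $U^\nu$ is smooth, so by local $C^{1,1}$ obstacle regularity \cite{KinderlehrerStampacchia,PetrosyanShahgholianUraltseva} the function $u$ is $C^{1,1}$, in particular continuous, on the open set $\{x:\operatorname{dist}(x,\operatorname{supp}\nu)>\delta_\nu/2\}$. This set contains a neighborhood of $\Omega^c$. On $\Omega^c$ we have $u=0$ by definition, and the canonical representative $U^\nu-G\ast(\kappa\mathbf 1_\Omega)$ agrees with this.

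\emph{Main obstacle.} I expect the hardest step to be the identification $-\Delta V=\kappa\mathbf 1_\Omega$ exactly, in particular ruling out extra mass on $\partial\Omega$. I would handle it by using $C^{1,1}$ regularity near $\partial\Omega$: this gives $\Delta u=0$ a.e. on $\{u=0\}$, and $|\partial\Omega\cap\{u=0\}|$ contributes nothing because $\Delta u\in L^\infty$ there. If that route becomes delicate, I would fall back on \cite[Theorem~2.1]{GustafssonSakai}.
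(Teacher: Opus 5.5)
Your construction is the paper's in different notation. Your $V$ equals $-s-\frac{\kappa}{2n}|x|^2$, where $s$ is the paper's least superharmonic majorant of $\gamma=-\frac{\kappa}{2n}|x|^2-U^\nu$, so $u=U^\nu-V=s-\gamma$ is the same odometer.

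The genuine gap is your third fact. The constraint $-\Delta V\le\kappa$ is one-sided: it only says that $\kappa\,dx+\Delta V$ is a positive measure. It does not make $-\Delta V$ absolutely continuous, because $-\Delta V$ can still carry a negative singular part. For instance, $V=|x_1|$ satisfies $-\Delta V\le\kappa$, yet $-\Delta V$ is $-2$ times surface measure on $\{x_1=0\}$. A convex kink of $V$ along the free boundary is exactly what has to be excluded. You allude to the superharmonicity of the obstacle, but you never turn it into the statement you actually need: the lower Lewy--Stampacchia bound $-\Delta V\ge0$ on $D=\mathbb R^n\setminus\operatorname{supp}\nu$, equivalently $0\le\Delta u\le\kappa$ there.

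Your fallback does not close the gap as written. The $C^{1,1}$ theorem of \cite{PetrosyanShahgholianUraltseva} takes the equation $\Delta u=\kappa\mathbf 1_{\{u>0\}}$ as a hypothesis, so using it to derive that equation is circular. The variational-inequality regularity in \cite{KinderlehrerStampacchia} applies to your Perron envelope only after you localize it to a variational solution on balls in $D$, and that again runs through the Lewy--Stampacchia inequality.

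The missing step is short; it is what the paper proves via $S\le H_BS$.
\begin{itemize}
\item Take a ball $B\Subset D$ and let $P_BV$ be the harmonic extension of $V|_{\partial B}$. Here $V$ is bounded on $\partial B$, being squeezed between $U^\nu$ and your compact-support competitor.
\item Since $U^\nu$ is harmonic near $\overline B$, you have $P_BV\le U^\nu$ in $B$. So the function equal to $\max\{V,P_BV\}$ in $B$ and to $V$ outside is admissible; apply the pasting lemma to $V+\frac{\kappa}{2n}|x|^2$.
\item Maximality forces $V\ge P_BV$, so $V$ has the super-mean-value property in $D$. The spherical-mean argument then gives $-\Delta V\ge0$ in $\mathcal D'(D)$.
\item Only now do you get $\Delta u\in L^\infty_{\mathrm{loc}}(D)$, hence $u\in W^{2,p}_{\mathrm{loc}}(D)$.
\item Stampacchia's lemma, applied to $u$ and then to $\nabla u$, yields $D^2u=0$ a.e. on all of $\{u=0\}$. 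This includes any part of $\partial\Omega$ of positive measure; absolute continuity alone would only give vanishing on the interior of the coincidence set.
\item This gives $\Delta u=\kappa\mathbf 1_\Omega$ in $D$, after which \cite{PetrosyanShahgholianUraltseva} legitimately gives $C^{1,1}$.
\end{itemize}

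Two smaller points. First, your compact-support claim is correct, but the mechanism is not ``monotonicity in the obstacle''. Superposing full-height single-atom solutions would violate the cap where the balls overlap. The admissible competitor is the convex combination $w=\sum_i\frac{m_i}{\mathfrak m}\kappa\,G*\mathbf 1_{B(x_i,R)}$ with $\kappa\omega_nR^n=\mathfrak m$. It satisfies $-\Delta w\le\kappa$, $w\le U^\nu$ by Newton's theorem, and $w=U^\nu$ off $\bigcup_iB(x_i,R)$. This is a tidier variant of the paper's even split $\kappa/M$. Second, the local upper bound on $V$ near the atoms should come from upper semicontinuity of the subharmonic function $V+\frac{\kappa}{2n}|x|^2$. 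It cannot come from the representation $V=G*\rho$, which is only available after the identification.
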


\begin{proof}
Let $\Gamma$ be the Newtonian kernel, normalized by
$-\Delta\Gamma=\delta_0$, and write $U^\lambda=\Gamma*\lambda$.  Set
\[
 \gamma(x)=-\frac{\kappa}{2n}|x|^2-U^\nu(x),
\]
so that
\[
\Delta\gamma=\nu-\kappa
 \]
in the sense of distributions.  Let $s$ be the least superharmonic majorant of
$\gamma$ and put $u=s-\gamma$.  We first verify globally that this
majorant exists and that $u$ has compact support.

For $1\le i\le M$, choose $r_i>0$ by
\[
 r_i^n=\frac{Mm_i}{\kappa\omega_n}
\]
and define the one-source odometer
\[
 w_i=U^{m_i\delta_{x_i}-(\kappa/M)
              \mathbf{1}_{B(x_i,r_i)}\,d x}.
\]
Newton's theorem \cite[Theorem~9.7]{LiebLoss} gives $w_i\ge0$, $w_i=0$ on $B(x_i,r_i)^c$, and
\[
 \Delta w_i=\frac\kappa M\mathbf{1}_{B(x_i,r_i)}\,d x
             -m_i\delta_{x_i}
\]
in the sense of distributions.  To avoid an indeterminate cancellation at the atoms, define the finite continuous representative
\[
 s_0(x):=-\frac{\kappa}{2n}|x|^2
 -\frac\kappa M\sum_{i=1}^M
 U^{\mathbf{1}_{B(x_i,r_i)}\,d x}(x).
\]
Away from the atoms it equals $\gamma+\sum_iw_i$; at an atom the obstacle
is $-\infty$, so $s_0\ge\gamma$ there as well.  In particular,
\[
 \Delta s_0=-\kappa+\frac\kappa M
                   \sum_{i=1}^M\mathbf{1}_{B(x_i,r_i)}\le0
\]
in the sense of distributions, so that $s_0$ is a
superharmonic majorant of $\gamma$.  Moreover, it agrees with $\gamma$ outside a compact set.  The standard reduction theorem for superharmonic majorants therefore produces $s$, and minimality gives
\[
 0\le u=s-\gamma\le\sum_{i=1}^Mw_i.
\]
In particular, $u$ is compactly supported.
  
Away from the finitely many atoms, the obstacle $\gamma$ is smooth and
satisfies $\Delta\gamma=-\kappa$.  Set
\[
D=\mathbb R^n\setminus\operatorname{supp}\nu.
\]
We first establish the Lewy--Stampacchia inequalities
\begin{equation}\label{eq:lewy-stampacchia}
-\kappa\leq\Delta s\leq0
\qquad\text{in }\mathcal D'(D).
\end{equation}
The upper bound follows immediately from the superharmonicity of $s$.

To prove the lower bound, define
\[
S(x)=s(x)+\frac{\kappa}{2n}|x|^2.
\]
Since $s\geq\gamma$, we have
\[
S\geq\gamma+\frac{\kappa}{2n}|x|^2=-U^\nu.
\]
Fix a ball $B\Subset D$.  The inequalities
$\gamma\leq s\leq s_0$, together with the continuity of $\gamma$ and
$s_0$ on $\overline B$, show that $s$, and hence $S$, is bounded on
$\overline B$.  Let $H_BS$ denote the Poisson extension to $B$ of the
boundary values $S|_{\partial B}$.  Because $-U^\nu$ is harmonic in a
neighborhood of $\overline B$ and
\[
S\geq -U^\nu\qquad\text{on }\partial B,
\]
the harmonic comparison principle gives
\[
H_BS\geq-U^\nu\qquad\text{in }B.
\]
It follows that
\[
\widetilde s(x)
=H_BS(x)-\frac{\kappa}{2n}|x|^2
\]
satisfies
\[
\widetilde s\geq\gamma,
\qquad
\Delta\widetilde s=-\kappa\leq0
\qquad\text{in }B.
\]
Moreover, since $S$ is lower semicontinuous,
\[
\liminf_{\substack{y\to x\\y\in B}}\widetilde s(y)
\geq s(x)
\qquad\text{for every }x\in\partial B.
\]
The pasting lemma for superharmonic functions therefore shows that the
function
\[
w=
\begin{cases}
\min\{s,\widetilde s\} &\text{in }B\\
s &\text{in }\mathbb R^n\setminus B
\end{cases},
\]
is a superharmonic majorant of $\gamma$.  By the minimality of $s$, we
have $s\leq w$.  Since $w\leq s$ by construction, it follows that
$w=s$, and therefore
\[
s\leq\widetilde s\qquad\text{in }B.
\]
Equivalently,
\[
S\leq H_BS\qquad\text{in }B.
\]

Evaluating the inequality $S\leq H_BS$ at the center of $B$, we find
that the locally bounded function $S$ satisfies
\[
S(x)\leq\fint_{\partial B(x,r)}S(y)\,d\sigma(y)
\qquad
\text{whenever }\overline{B(x,r)}\subset D.
\]
We now interpret this inequality distributionally.  Fix a nonnegative
function $\varphi\in C_c^\infty(D)$ and take $r>0$ smaller than the
distance from $\operatorname{supp}\varphi$ to $D^c$.  Denote spherical
averaging at radius $r$ by \looseness=-1
\[
M_r g(x)=\fint_{\partial B(x,r)}g(y)\,d\sigma(y).
\]
Multiplying $S\leq M_rS$ by $\varphi$ and using the symmetry of the
spherical averaging operator gives
\[
0\leq\int_D\bigl(M_rS-S\bigr)\varphi\,dx
=\int_D S\bigl(M_r\varphi-\varphi\bigr)\,dx.
\]
For each fixed $\varphi\in C_c^\infty(D)$, Taylor's formula gives
\[
\frac{2n}{r^2}\bigl(M_r\varphi-\varphi\bigr)
\longrightarrow\Delta\varphi
\qquad\text{uniformly as }r\downarrow0.
\]
Since $S$ is locally bounded, we may pass to the limit and obtain
\[
\int_D S\,\Delta\varphi\,dx\geq0
\qquad
\text{for every nonnegative }\varphi\in C_c^\infty(D).
\]
Thus $\Delta S\geq0$ in $\mathcal D'(D)$, which proves the lower bound
in \eqref{eq:lewy-stampacchia}.

Since $u=s-\gamma$ and $\Delta\gamma=-\kappa$ in $D$, we conclude that
\[
0\leq\Delta u=\Delta s+\kappa\leq\kappa
\qquad\text{in }\mathcal D'(D).
\]
In particular, $\Delta u$ is represented by an $L^\infty_{\mathrm{loc}}(D)$
function.  The interior Calder\'on--Zygmund estimates consequently give
\[
u\in W^{2,p}_{\mathrm{loc}}(D)
\qquad\text{for every }1<p<\infty.
\]

On the open noncoincidence set $\Omega=\{u>0\}$, the least
superharmonic majorant $s$ is harmonic, and hence
\[
\Delta u=\kappa\qquad\text{in }\Omega.
\]
On the other hand, Stampacchia's lemma
\cite[Corollary~1.21]{HKM}, applied first to $u$, gives
\[
\nabla u=0\qquad\text{almost everywhere on }\{u=0\}.
\]
Applying the same lemma to each component of
$\nabla u\in W^{1,p}_{\mathrm{loc}}(D)$ then gives
\[
D^2u=0\qquad\text{almost everywhere on }\{u=0\}.
\]
It follows that
\[
\Delta u=\kappa\mathbf1_{\{u>0\}}
=\kappa\mathbf1_\Omega
\qquad\text{almost everywhere in }D.
\]

Thus, $u$ is a nonnegative solution of the classical obstacle problem
in $D$ with constant right-hand side $\kappa$.  The optimal regularity
theorem for the obstacle problem
\cite[Theorem~2.3]{PetrosyanShahgholianUraltseva}
(see also \cite{Caffarelli1998}) yields 
\[
u\in C^{1,1}_{\mathrm{loc}}(D).
\]
In particular, the distributional Laplacian of $u$ away from the atoms
is the locally bounded function $\kappa\mathbf1_\Omega$ and has no
singular component supported on the free boundary.

It remains to analyze $u$ near the atoms.  After combining atoms at the
same location, we may assume that the points $x_i$ are distinct.  Fix
$i$ and choose $r>0$ so small that the closed ball
$\overline B=\overline{B(x_i,r)}$ contains no other atom.  Then $\gamma$
is continuous on $\partial B$, so $\min_{\partial B}\gamma>-\infty$, and
since $s\ge\gamma$, the minimum principle for the superharmonic function
$s$ gives $s\ge\min_{\partial B}\gamma$ on $B$.  Together with
$s\le s_0$ and the continuity of $s_0$, this shows that $s$ is bounded
on $B$.  Consequently, on $B$,
\[
 u=s-\gamma=m_i\Gamma(\,\cdot-x_i)+h_i,
 \qquad
 h_i:=s+\frac{\kappa}{2n}|\cdot|^2+\sum_{j\ne i}m_j\Gamma(\,\cdot-x_j),
\]
with $h_i$ bounded.  Since $\Gamma(x-x_i)\to\infty$ as $x\to x_i$, after
shrinking $r$ we have $u>0$ on $B$, and in particular $s$ is harmonic on
the punctured ball $B\setminus\{x_i\}$.  Being bounded, $s$ has a
removable singularity at $x_i$ and is therefore harmonic on all of $B$.
Hence $\Delta u=-\Delta\gamma=\kappa-m_i\delta_{x_i}$ on $B$, and
$x_i\in B\subset\Omega$.  Thus every source atom is interior to
$\Omega$, and the global distributional identity is
\[
 \Delta u=\kappa\mathbf{1}_\Omega\,d x-\nu.
\]
The finite support of $\nu$ now gives
$\operatorname{supp}\nu\Subset\Omega$.  Finally, testing this identity against a
smooth function equal to one on an open neighborhood of the compact support
of $u$ gives
$\kappa|\Omega|=\nu(\mathbb{R}^n)=\mathfrak{m}$.  Continuity near $\Omega^c$ follows from the
local $C^{1,1}$ regularity, and all remaining assertions follow.
\end{proof}

The following elementary consequence of Proposition \ref{prop:balayage}
is the starting point of the high-frequency argument.
\begin{lemma}
\label{lem:moment}
For $\nu=\sum_{i=1}^M m_i\delta_{x_i}$, let $u,\Omega$ be the odometer and domain which correspond to an application of Proposition \ref{prop:balayage} with height $\kappa$.  For every $z\in {\Omega^c}$ and $r>0$,
\begin{align}
 \fint_{\partial B(z,r)}u
 \le\frac{\kappa r^2}{2n},                            \label{eq:2.5}
\end{align}
and therefore
\begin{align}
 \int_{B(z,r)}u(y)\,d y
 \le\frac{\kappa\omega_n}{2(n+2)}r^{n+2}.       \label{eq:2.6}
\end{align}
\end{lemma}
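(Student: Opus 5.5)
The plan is to compare $u$ with the radial function $\frac{\kappa}{2n}|y-z|^2$, which has Laplacian exactly $\kappa$, and then read off the spherical-mean inequality \eqref{eq:2.5} from Green's identity.

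Fix $z\in\Omega^c$. By Proposition \ref{prop:balayage} we have $\Delta u=\kappa\mathbf 1_\Omega-\nu\le\kappa$ in the sense of distributions. The canonical representative of $u$ is continuous near $\Omega^c$ and satisfies $u(z)=0$. Set
\[
 w(y):=u(y)-\frac{\kappa}{2n}|y-z|^2 .
\]
Then $\Delta w=\Delta u-\kappa\le0$, so $w$ is superharmonic on $\mathbb R^n$. It is the potential of the nonnegative measure $\nu+\kappa\mathbf 1_{\Omega^c}$, up to a harmonic quadratic, hence lower semicontinuous. By continuity of $u$ near $z$, its value at $z$ is $w(z)=u(z)=0$.

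The super-mean-value property for superharmonic functions then gives, for every $r>0$,
\[
 \fint_{\partial B(z,r)}w\le w(z)=0 .
\]
Since $|y-z|^2=r^2$ on $\partial B(z,r)$, this is exactly \eqref{eq:2.5}. If we prefer to avoid potential-theoretic language, the same inequality follows from the standard identity
\[
 \frac{d}{dr}\fint_{\partial B(z,r)}w=\frac{1}{n\omega_n r^{n-1}}\int_{B(z,r)}\Delta w\le 0,
\]
valid distributionally. One integrates this in $r$ from $0$ and uses continuity of $u$ at $z$ to identify the limit of the spherical means as $r\to0$ with $u(z)=0$.

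For \eqref{eq:2.6}, integrate \eqref{eq:2.5} in polar coordinates:
\[
 \int_{B(z,r)}u=\int_0^r n\omega_n\rho^{n-1}\fint_{\partial B(z,\rho)}u\,d\rho\le\int_0^r n\omega_n\rho^{n-1}\frac{\kappa\rho^2}{2n}\,d\rho=\frac{\kappa\omega_n}{2(n+2)}r^{n+2}.
\]

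The only delicate point is justifying the mean-value inequality at $z$ when $u$ has a distributional Laplacian containing the negative atoms $-\nu$. I expect this to be handled by the fact that $-\nu\le0$ only helps superharmonicity, and $z\in\Omega^c$ lies at distance at least $\delta_\nu>0$ from the atoms. Consequently $u$ is $C^{1,1}$ near $z$, and the evaluation $w(z)=0$ is legitimate.
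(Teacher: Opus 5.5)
Your proof is correct and follows the paper's argument. Both show that $w=u-\frac{\kappa}{2n}|\cdot-z|^2$ is superharmonic with $w(z)=0$ by continuity of $u$ near $\Omega^c$, apply the super-mean-value property to get \eqref{eq:2.5}, and integrate in polar coordinates to get \eqref{eq:2.6}. One small imprecision: $\kappa\mathbf 1_{\Omega^c}$ has infinite mass and $|y-z|^2$ is not harmonic, so $w$ is not globally a potential ``up to a harmonic quadratic''; it is only locally a Newtonian potential of a nonnegative measure plus a harmonic function, but that is all your superharmonicity argument needs.
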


\begin{proof}
The canonical representative of
\[
 w(y)=u(y)-\frac{\kappa}{2n}|y-z|^2
\]
is superharmonic and satisfies $w(z)=0$.  Its spherical means do not exceed
its value at the center, which gives \eqref{eq:2.5}.  Integrating the
spherical means in $r$ gives \eqref{eq:2.6}.
\end{proof}

\begin{proposition}\label{finite_P}
Let $n\ge3$, let $\nu$ be a finite positive atomic measure of mass
$\mathfrak{m},$ and $\kappa>0$.  For $u$ the associated odometer given by Proposition \ref{prop:balayage}, we define $P\colon \Omega^c \to [0,+\infty]$ by
\begin{align}\label{P_estimate}
P(x) = \int_\Omega\frac{u(y)}{|x-y|^{n+2}}\,dy.
\end{align} 
Then there exists $C>0$ independent of $n$ such that
\begin{align*}
 \int_{\Omega^c}P(x)\,d x \leq \frac{n\omega_n}2 C^n\mathfrak{m},
\end{align*} 
and in particular, $P(x) < +\infty$ for almost every $x \in {\Omega^c}$.
\end{proposition}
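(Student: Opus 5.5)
The plan is to use Fubini to move the integral onto $\Omega$, turning the claim into a weighted estimate for $u$ against the distance to $\Omega^c$. That estimate I would then reduce, via a Whitney decomposition of $\Omega$, to the local moment bound \eqref{eq:2.6} of Lemma~\ref{lem:moment} plus the identity $\kappa|\Omega|=\mathfrak m$ from Proposition~\ref{prop:balayage}.

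\emph{Step 1 (Fubini).} For $y\in\Omega$ put $d(y):=\operatorname{dist}(y,\Omega^c)>0$. Tonelli's theorem, which applies since everything is nonnegative, gives
\[
 \int_{\Omega^c}P(x)\,dx=\int_\Omega u(y)\int_{\Omega^c}\frac{dx}{|x-y|^{n+2}}\,dy .
\]
Since $\Omega^c\cap B(y,d(y))=\emptyset$, polar coordinates give
\[
\int_{\Omega^c}|x-y|^{-n-2}\,dx\le n\omega_n\int_{d(y)}^\infty r^{-3}\,dr=\frac{n\omega_n}{2\,d(y)^2}.
\]
It therefore suffices to show
\[
\int_\Omega \frac{u(y)}{d(y)^2}\,dy\le C^n\kappa|\Omega|=C^n\mathfrak m .
\]
This already produces the prefactor $\tfrac{n\omega_n}{2}$ in the statement. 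Finiteness of $P$ almost everywhere on $\Omega^c$ then follows at once from integrability.

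\emph{Step 2 (Whitney decomposition).} Decompose the bounded open set $\Omega$ into Whitney cubes $\{Q\}$, with disjoint interiors, such that $\operatorname{diam} Q\le d(y)\le 4\operatorname{diam}Q$ for all $y\in Q$. Fix a cube $Q$ and let $\ell_Q=\operatorname{diam}Q$. Pick a point $z_Q\in\Omega^c$ with $\operatorname{dist}(z_Q,Q)\le 4\ell_Q$. Then $Q\subset B(z_Q,5\ell_Q)$, and \eqref{eq:2.6} applied at $z=z_Q$ with $r=5\ell_Q$ gives
\[
\int_Q\frac{u}{d^2}\le \ell_Q^{-2}\int_{B(z_Q,5\ell_Q)}u\le \frac{\kappa\omega_n}{2(n+2)}\,5^{n+2}\ell_Q^{\,n}.
\]
A cube of diameter $\ell_Q$ has side $\ell_Q/\sqrt n$, so $\ell_Q^n=n^{n/2}|Q|$. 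By Stirling,
\[
\omega_n n^{n/2}=\frac{\pi^{n/2}n^{n/2}}{\Gamma(n/2+1)}\le (2\pi e)^{n/2}.
\]
Hence $\int_Q u/d^2\le C^n\kappa|Q|$ with $C$ absolute. Summing over the disjoint cubes gives $\int_\Omega u/d^2\le C^n\kappa|\Omega|=C^n\mathfrak m$, which is exactly what Step 1 requires.

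\emph{Main obstacle.} The only delicate point is bounding $u$ on $\Omega$ using only information anchored at points of $\Omega^c$. Lemma~\ref{lem:moment} controls averages of $u$ over balls centered in $\Omega^c$ and nothing pointwise inside $\Omega$. Near the atoms $u$ is not subharmonic, so a direct mean-value bound at $y$ is not available. The Whitney decomposition avoids this: it needs only ball integrals around exterior points, at the cost of a dimensional factor $C^n$ from comparing a cube to its enclosing ball, and the statement allows such a factor. I would check carefully that the Whitney constants, the enlargement factor $5$, and the Stirling estimate combine into a purely exponential $C^n$, with no superexponential factor such as $n^{n/2}$ surviving uncancelled.
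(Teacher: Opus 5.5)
Your proposal is correct and is essentially the paper's proof. Tonelli's theorem and polar integration reduce the claim to $\int_\Omega u\,d^{-2}\le C^n\mathfrak m$, which then follows from a Whitney decomposition, the exterior moment bound \eqref{eq:2.6} centered at a nearby point of $\Omega^c$, the Stirling bound on $\omega_n n^{n/2}$, and $\kappa|\Omega|=\mathfrak m$. The only difference is cosmetic: the paper uses a Whitney variant with $\operatorname{dist}(Q,\Omega^c)\ge 8\operatorname{diam}Q$, which gives the factor $(9/8)^{n+2}18^n$, whereas your standard constants give $5^{n+2}$. For your version you only need $d\ge\operatorname{diam}Q$ on $Q$ and $\operatorname{dist}(Q,\Omega^c)\le 4\operatorname{diam}Q$, which is exactly what the standard Whitney lemma provides.
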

\begin{proof}
Let $d(y)=\operatorname{dist}(y,\Omega^c)$ for $y\in\Omega$ and put
\[
 W(y)=\frac{u(y)}{d(y)^2}.
\]
Let $\mathcal W$ be the collection of maximal dyadic cubes $Q$ satisfying
$\operatorname{dist}(Q,\Omega^c)\ge8\operatorname{diam} Q$, a variant of the Whitney decomposition \cite[Chapter~VI, Theorem~1]{SteinSI}.  These cubes have pairwise disjoint interiors
and cover $\Omega$ up to their boundaries.  Indeed, for any fixed point of $\Omega$ one can find a sufficiently small dyadic cube which satisfies the inequality.
If $\widehat Q$ is the dyadic
parent of a maximal cube $Q$, then, with $s_Q=\operatorname{diam} Q$ and
$\delta_Q=\operatorname{dist}(Q,\Omega^c)$,
\[
 \operatorname{dist}(\widehat Q,\Omega^c)<8\operatorname{diam}\widehat Q=16s_Q.
\]
Since every point of $\widehat Q$ is within $2s_Q$ of $Q$, it follows that
\[
 8s_Q\le\delta_Q<18s_Q.                          
\]
For each $Q$, choose $z_Q\in {\Omega^c}$ and $y_Q\in\overline Q$ such that
$|z_Q-y_Q|=\delta_Q$.
Since $d\ge\delta_Q$ on $Q$, $Q\subset B(z_Q,\delta_Q+s_Q)$, and $u\ge0$,
Lemma~\ref{lem:moment} gives
\begin{align*}
 \int_QW
 &\le\frac{1}{\delta_Q^{2}}\int_Q u
 \le\frac{1}{\delta_Q^{2}}\int_{B(z_Q,\delta_Q+s_Q)}u
 \le\frac{\kappa\omega_n}{2(n+2)}
   \left(\frac{\delta_Q+s_Q}{\delta_Q}\right)^{n+2}\delta_Q^{\,n}\\
 &\le\frac{\kappa\omega_n}{2(n+2)}\left(\frac98\right)^{n+2}(18s_Q)^n
 =A_n\kappa|Q|
\end{align*}
for
\[
 A_n=\frac{\omega_n}{2(n+2)}
 \left(\frac98\right)^{n+2}(18\sqrt n)^n
 \le C^n.                                        
\]
Here we used $|Q|=(s_Q/\sqrt n)^n$ and the bound $\omega_n n^{n/2}\le C^n$,
which follows from $\omega_n=\pi^{n/2}/\Gamma(n/2+1)$ and Stirling's
formula \cite[Appendix~A.6]{Grafakos}.
Summing over $Q$ and using
$\kappa|\Omega|=\mathfrak{m}$ gives
\begin{equation}
\int_{\Omega} W(y)\, dy \le C^n\mathfrak{m}.                  \label{eq:4.4}
\end{equation}

We next estimate the integral of $P$ defined in \eqref{P_estimate}.  Since $|x-y|\ge d(y)$ for $x\in {\Omega^c}$ and $y\in\Omega$, Tonelli's theorem,
polar integration, and \eqref{eq:4.4} give
\begin{align*}
 \int_{\Omega^c}P(x)\,d x
 &\le\int_\Omega u(y)
       \int_{|x-y|\ge d(y)}|x-y|^{-n-2}\,d x\,d y\\
 &=\frac{n\omega_n}2\int_\Omega\frac{u(y)}{d(y)^2}\,d y
 \le\frac{n\omega_n}2 C^n\mathfrak{m} <+\infty.
\end{align*}
In particular, $P(x)<\infty$ for almost every $x\in {\Omega^c}$.
\end{proof}

\section{A Square Function Estimate for the Abel-powers}
\label{sec:resolvent}
In this section, we develop the necessary machinery and establish the proof of Theorem \ref{thm:resolvent-square}.  To this end, we recall the free Abel mean defined in \eqref{Abel},
\[
 A_s:=s(s-\Delta)^{-1}
     =\int_0^\infty se^{-st}H_t\,d t,
 \qquad s>0.
\]
The operator $A_s$ is positive and contractive on both
$L^2(\mathbb R^n)$ and $L^\infty(\mathbb R^n)$, and satisfies the identity
\begin{equation}\label{eq:abel-derivative}
 s\partial_sA_s=A_s(I-A_s).
\end{equation}

Throughout this section and the next we identify
$\mu=\kappa\mathbf 1_\Omega\,d x$ with its density $\kappa\mathbf 1_\Omega$;
in particular $\|\mu\|_\infty=\kappa$ and
$\|\mu\|_2^2=\kappa^2|\Omega|=\kappa\mathfrak m$.
We first work with a smooth regularization of the atomic datum.  Thus,
until the final passage to the limit, we assume that
$\nu\in C_c^\infty(\Omega)$ and
$u\in H^1_0(\Omega)\cap L^\infty(\Omega)$.

\subsection{Regularization of the atoms}

\begin{lemma}
\label{lem:radial-regularization}
There are nonnegative functions $\nu_\varepsilon\in C_c^\infty(\Omega)$
with $\int\nu_\varepsilon=\mathfrak m$ and
$\nu_\varepsilon\,d x\to\nu$ weakly, and bounded nonnegative functions
$u_\varepsilon\in H^1_0(\Omega)$ such that
\[
 \nu_\varepsilon=\mu-\Delta u_\varepsilon,
 \qquad \{u_\varepsilon>0\}=\Omega,
\]
and $u_\varepsilon=u$ in a fixed neighborhood of $\Omega^c$.
\end{lemma}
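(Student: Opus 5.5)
Since the atoms lie in the interior of $\Omega$ (Proposition~\ref{prop:balayage}), I will mollify each atom radially and correct the odometer only inside small balls around the atoms.

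\emph{Choice of radius.} After merging coincident atoms, assume the $x_i$ are distinct. Fix $\rho>0$ so small that $\rho<\delta_\nu/2$ and the balls $B(x_i,2\rho)$ are pairwise disjoint. Then $\overline{B(x_i,2\rho)}\subset\Omega$ for every $i$.

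\emph{Mollification.} Let $\phi$ be a nonnegative, radial, smooth bump supported in $B(0,1)$ with $\int\phi=1$. For $0<\varepsilon<\rho$ set $\phi_\varepsilon=\varepsilon^{-n}\phi(\cdot/\varepsilon)$ and
\[
 \nu_\varepsilon=\sum_{i=1}^M m_i\,\phi_\varepsilon(\cdot-x_i).
\]
Then $\nu_\varepsilon\in C_c^\infty(\Omega)$, $\nu_\varepsilon\ge0$, $\int\nu_\varepsilon=\mathfrak m$, and $\nu_\varepsilon\,dx\to\nu$ weakly.

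\emph{The corrected odometer.} Define
\[
 u_\varepsilon=u+\Gamma*(\nu_\varepsilon-\nu)=u+\sum_i m_i\bigl(\Gamma*\phi_\varepsilon(\cdot-x_i)-\Gamma(\cdot-x_i)\bigr).
\]
Because $\phi_\varepsilon$ is radial with unit mass, Newton's theorem \cite[Theorem~9.7]{LiebLoss} gives $\Gamma*\phi_\varepsilon=\Gamma$ outside $B(0,\varepsilon)$. Inside that ball, superharmonic mean-value considerations give $\Gamma*\phi_\varepsilon\le\Gamma$. Hence each correction term is nonpositive and supported in $B(x_i,\varepsilon)$, so $u_\varepsilon=u$ outside $\bigcup_i B(x_i,\rho)$, which is a fixed neighborhood of $\Omega^c$. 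The identity $-\Delta u_\varepsilon=-\Delta u+\nu_\varepsilon-\nu=\nu_\varepsilon-\mu$ holds, as required.

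\emph{Positivity and boundedness near the atoms.} From the proof of Proposition~\ref{prop:balayage}, $u=m_i\Gamma(\cdot-x_i)+h_i$ on $B(x_i,2\rho)$ with $h_i$ bounded. Therefore
\[
 u_\varepsilon=m_i\,\Gamma*\phi_\varepsilon(\cdot-x_i)+h_i \quad\text{on } B(x_i,\rho),
\]
which is bounded because $\Gamma*\phi_\varepsilon$ is continuous. This is the main obstacle: $u_\varepsilon>0$ on $B(x_i,\rho)$ does not follow from the decomposition alone, since $\Gamma*\phi_\varepsilon$ is merely bounded and $h_i$ could be negative.

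I would settle positivity with a minimum principle. On $B(x_i,\rho)$ we have $\Delta u_\varepsilon=\kappa-\nu_\varepsilon$. Write $u_\varepsilon=w+g$, where $w$ solves $\Delta w=\kappa$ on the ball with boundary values $u$, and $g$ satisfies $-\Delta g=-\nu_\varepsilon$ with zero boundary values, i.e.\ $g=G*\nu_\varepsilon\ge0$ with $G$ the Dirichlet Green function. This gives $u_\varepsilon\ge w$, and $w$ can be made positive by shrinking $\rho$. Indeed, $u\ge c>0$ on $\partial B(x_i,\rho)$ since $\rho<\delta_\nu$, and $w\ge \min_{\partial B}u-\kappa\rho^2/(2n)$. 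Since $u\to\infty$ at the atom, $\min_{\partial B(x_i,\rho)}u\to\infty$ as $\rho\to0$, so choosing $\rho$ small in the first step makes $w>0$.

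\emph{Conclusion.} Since $u_\varepsilon=u$ off the balls, $u_\varepsilon>0$ on the balls, and the balls lie in $\Omega=\{u>0\}$, we get $\{u_\varepsilon>0\}=\Omega$. The function $u_\varepsilon$ is bounded and compactly supported, and $u\in C^{1,1}_{\mathrm{loc}}$ near $\Omega^c$ by Proposition~\ref{prop:balayage} and vanishes on $\Omega^c$. Hence $u_\varepsilon\in H^1_0(\Omega)$; if $\Omega$ is irregular, I would use the $W^{1,2}$ function vanishing off $\Omega$ as the definition.
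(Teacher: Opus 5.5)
Your construction is the paper's: the same radial mollifications, the same corrected odometer $u_\varepsilon=u+\sum_i m_i\bigl(\Gamma*\phi_\varepsilon(\cdot-x_i)-\Gamma(\cdot-x_i)\bigr)$, and the same use of Newton's theorem to get $u_\varepsilon=u$ off $\bigcup_i B(x_i,\varepsilon)$. The one real difference is the positivity step.

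Your diagnosis of the ``main obstacle'' there is off. Positivity \emph{does} follow from the decomposition $u_\varepsilon=h_i+m_i\,\Gamma*\phi_\varepsilon(\cdot-x_i)$ on $B(x_i,\varepsilon)$, and this is how the paper proves it. The radial superharmonic potential $\Gamma*\phi_\varepsilon(\cdot-x_i)$ is minimized on $\partial B(x_i,\varepsilon)$, where Newton's theorem gives the value $\Gamma(\varepsilon e_1)$. Hence $u_\varepsilon\ge m_i\Gamma(\varepsilon e_1)-\sup|h_i|>0$ once $\varepsilon$ is small. Calling the potential ``merely bounded'' misses that this lower bound blows up as $\varepsilon\downarrow0$, and small $\varepsilon$ is all the lemma needs.

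Your alternative is also correct. You split $u_\varepsilon=w+g$ on a fixed ball $B(x_i,\rho)$ with $\Delta w=\kappa$ and $w=u$ on the sphere, get $g\ge0$ by the minimum principle, and choose $\rho$ so that $\min_{\partial B(x_i,\rho)}u>\kappa\rho^2/(2n)$. Fix the sign: it is $-\Delta g=\nu_\varepsilon$, not $-\nu_\varepsilon$, and that is what makes $g=G*\nu_\varepsilon\ge0$. This route gives positivity for \emph{every} $\varepsilon<\rho$, at the cost of solving an auxiliary Dirichlet problem.

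On the $H^1_0$ step, drop the hedge about redefining $H^1_0(\Omega)$ for irregular $\Omega$. The space of $W^{1,2}$ functions vanishing off $\Omega$ can be strictly larger than $H^1_0(\Omega)$, and the form domain used afterward is the genuine $H^1_0(\Omega)$.

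No boundary regularity is needed anyway. Since $u_\varepsilon\in C^1_c(\mathbb{R}^n)$ is nonnegative and vanishes everywhere on $\Omega^c$, the function $(u_\varepsilon-\delta)^+$ is Lipschitz with support in the compact set $\{u_\varepsilon\ge\delta\}\subset\Omega$, so it lies in $H^1_0(\Omega)$. Moreover $(u_\varepsilon-\delta)^+\to u_\varepsilon$ in $H^1$ as $\delta\downarrow0$, because $\nabla u_\varepsilon=0$ on $\{u_\varepsilon=0\}$. The paper reaches the same conclusion from $u_\varepsilon=O(d^2)$ and $|\nabla u_\varepsilon|=O(d)$ with distance cutoffs, which likewise uses no regularity of $\Omega$.
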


\begin{proof}
After combining atoms at the same point, we may assume that the points
$x_i$ are distinct.  Since they lie in the open set $\Omega$, there is
$\varepsilon_0>0$ such that the closed balls $\overline{B(x_i,\varepsilon_0)}$
are pairwise disjoint and contained in $\Omega$.  For
$0<\varepsilon\le\varepsilon_0$ let $\eta_{i,\varepsilon}\in C_c^\infty(B(x_i,\varepsilon))$
be nonnegative, radial about $x_i$, with $\int\eta_{i,\varepsilon}=1$, and set
\[
 \nu_\varepsilon:=\sum_{i=1}^M m_i\eta_{i,\varepsilon},
 \qquad
 u_\varepsilon:=u-\sum_{i=1}^M m_i\bigl(\Gamma(\,\cdot-x_i)-\Gamma*\eta_{i,\varepsilon}\bigr).
\]
Thus $\nu_\varepsilon\in C_c^\infty(\Omega)$ is nonnegative and satisfies
$\int\nu_\varepsilon=\mathfrak m$, $\nu_\varepsilon\,dx\to\nu$ weakly as
$\varepsilon\downarrow0$.  Moreover, since $-\Delta\Gamma=\delta_0$,
\[
 -\Delta u_\varepsilon
 =-\Delta u-\sum_{i=1}^M m_i\bigl(\delta_{x_i}-\eta_{i,\varepsilon}\bigr)
 =(\nu-\mu)-\nu+\nu_\varepsilon
 =\nu_\varepsilon-\mu .
\]

We next prove the assertion that $u_\varepsilon=u$ on the fixed neighborhood
$\mathbb R^n\setminus\bigcup_iB(x_i,\varepsilon_0)$ of $\Omega^c$, that
$u_\varepsilon$ is compactly supported, and $\{u_\varepsilon>0\}$
coincides with $\Omega$ off the balls $B(x_i,\varepsilon)$.
Indeed, this follows from the fact that $\eta_{i,\varepsilon}$ is radial with total mass one and Newton's theorem \cite[Theorem~9.7]{LiebLoss}, which asserts that 
\[
\Gamma*\eta_{i,\varepsilon}=\Gamma(\,\cdot-x_i)\quad \text{on} \quad B(x_i,\varepsilon)^c.
\]
In particular,
\[
u_\varepsilon=u \quad \text{on} \quad \mathbb R^n\setminus\bigcup_iB(x_i,\varepsilon),
\]
from which the claims follow.

We next show that $u_\varepsilon>0$ on each ball $B(x_i,\varepsilon)$.  Together
with the previous step this gives $u_\varepsilon\ge0$ and
$\{u_\varepsilon>0\}=\Omega$.  Fix a ball $B(x_i, \varepsilon)$.  By the proof of
Proposition~\ref{prop:balayage}, 
\[
u=m_i\Gamma(\,\cdot-x_i)+h_i \quad \text{on} \quad
B(x_i,\varepsilon_0)
\]
with $h_i$ bounded, and the terms with $j\ne i$
in the definition of $u_\varepsilon$ vanish on $B(x_i,\varepsilon)$.
Therefore
\[
 u_\varepsilon=h_i+m_i\,\Gamma*\eta_{i,\varepsilon}
 \qquad\text{on }B(x_i,\varepsilon).
\]
By Newton's theorem again, the radial potential
$\Gamma*\eta_{i,\varepsilon}$ is bounded below on $B(x_i,\varepsilon)$
by its value on $\partial B(x_i,\varepsilon)$, namely
$\Gamma(\varepsilon e_1)$, which tends to infinity as
$\varepsilon\downarrow0$ because $n\ge3$.  Hence
\[
u_\varepsilon\ge m_i\Gamma(\varepsilon e_1)-\sup|h_i|>0 \quad \text{on} \quad
 B(x_i,\varepsilon)
 \]
 for all small $\varepsilon$.  Since
$\Gamma*\eta_{i,\varepsilon}$ is also bounded, $u_\varepsilon$ is
bounded, and altogether $u_\varepsilon\ge0$ with
$\{u_\varepsilon>0\}=\Omega$.

Finally, we argue that $u_\varepsilon \in H^1_0(\Omega)$.  That $u_\varepsilon\in
L^2(\mathbb{R}^n)$ follows from the fact that $u_\varepsilon$ is bounded with compact support.  Concerning higher regularity, the formula 
\[
\Delta u_\varepsilon=\mu-\nu_\varepsilon\in
L^2(\mathbb{R}^n)
\]
yields $u_\varepsilon\in H^2(\mathbb{R}^n)$ by Plancherel.
Near $\Omega^c$ we have $u_\varepsilon=u$, and therefore the local $C^{1,1}$ regularity of Proposition~\ref{prop:balayage} together with $u=\nabla u=0$ on
$\Omega^c$ gives
\[
 u_\varepsilon=O(d^2),\qquad
 |\nabla u_\varepsilon|=O(d),
 \qquad d(x)=\operatorname{dist}(x,\Omega^c),
\]
near $\Omega^c$.  Thus, one can use regularized-distance cutoffs
\cite[Chapter~VI, Section~2, Theorem~2]{SteinSI} to approximate
$u_\varepsilon$ in $H^1$ by functions compactly supported in $\Omega$,
which implies the claim.
\end{proof}

All estimates below are uniform in $\varepsilon$, which we suppress.

\subsection{Free and killed Abel means}

Put $V=\kappa/u$ on $\Omega$ and let $K$ be the nonnegative
self-adjoint operator associated with the closed form
\[
 \mathfrak q[f]
 =\int_\Omega|\nabla f|^2\,d x+
   \int_\Omega V|f|^2\,d x,
 \qquad
 \mathcal D(\mathfrak q)
 =H^1_0(\Omega)\cap L^2(\Omega,V\,d x).
\]

The fact that the regularized odometer $u$ is continuous and strictly positive on
$\Omega$ imply that the potential $V=\kappa/u$ is nonnegative, measurable, and locally bounded.  Moreover, the multiplication form
\[
f\longmapsto\int_\Omega V|f|^2\,dx
\]
with domain
\[
L^2(\Omega)\cap L^2(\Omega,V\,dx)
\]
is a closed nonnegative form on $L^2(\Omega)$.  Its form sum with the closed Dirichlet form on
$H^1_0(\Omega)$ is therefore closed on
\[
H^1_0(\Omega)\cap L^2(\Omega,V\,dx).
\]
It is densely defined because $C_c^\infty(\Omega)$ is contained in
this intersection and is dense in $L^2(\Omega)$.
Thus $K=-\Delta_\Omega+V$, the Dirichlet Laplacian on $\Omega$ plus $V$ in the form sense.  Since
\[
\int_\Omega Vu^2=\kappa\int_\Omega u<\infty,
\]
the function $u$
belongs to the form domain.  
For $\varphi\in C_c^\infty(\Omega)$, the regularized identity
$-\Delta u=\nu-\mu$ from
Lemma~\ref{lem:radial-regularization}, together with
$Vu=\kappa\mathbf 1_\Omega=\mu$, gives
\[
 \mathfrak q(u,\varphi)
 =\int_\Omega\nabla u\cdot\nabla\varphi\,dx+\int_\Omega Vu\,\varphi\,dx
 =\int_\Omega(\nu-\mu)\varphi\,dx+\int_\Omega\mu\varphi\,dx
 =\int_\Omega\nu\varphi\,dx .
\]
Since $\nabla u$, $Vu=\kappa\mathbf 1_\Omega$, and $\nu$ belong to $L^2$,
each term is continuous in $\varphi$ for the $H^1$ norm, and
$C_c^\infty(\Omega)$ is dense in $H^1_0(\Omega)\supset\mathcal D(\mathfrak q)$.
Hence 
\[\mathfrak q(u,\varphi)=\int_\Omega\nu\varphi\,dx
\]
for all $\varphi\in\mathcal D(\mathfrak q)$, which, by the definition of the operator associated with a closed form \cite[Section~1.2.3, Definition~1.21]{Ouhabaz}, means that $u\in\mathcal D(K)$ and
\begin{equation}\label{eq:Ku}
 Ku=\nu,
 \qquad K^{-1}\nu=u.
\end{equation}
Here $K^{-1}$ is well defined because $\Omega$ is bounded and
Poincar\'e's inequality \cite[Section~5.6.1, Theorem~3]{Evans} implies $K$ has a
positive spectral gap.
Define the killed Abel mean $B_s=s(s+K)^{-1}$ and, for $j\geq0$, set
\begin{equation*}
 z_j(s)=s^j(s+K)^{-j-1}\nu,
 \qquad
 \rho_j(s)=Vz_j(s).
\end{equation*}
We call the functions $\rho_j(s)$ the \emph{absorption layers}.

\begin{lemma}
For $j\geq0$ and $s>0$,
\[
 0\leq z_j(s)\leq u,
 \qquad
 0\leq\rho_j(s)\leq\mu.
\]
After extension by zero from $\Omega$ to $\mathbb R^n$,
\begin{align}
 (s-\Delta)z_0(s)&=\nu-\rho_0(s),
 \label{eq:layer-equation-zero}\\
 (s-\Delta)z_j(s)&=s z_{j-1}(s)-\rho_j(s),
 \qquad j\geq1.
 \label{eq:layer-equation}
\end{align}
Consequently, for every $N\geq1$,
\begin{equation}\label{eq:free-killed-comparison}
 (A_s^N-B_s^N)\nu
 =\sum_{j=0}^{N-1}A_s^{N-j}\rho_j(s).
\end{equation}
In particular,
\begin{equation}\label{eq:exterior-comparison}
 \mathbf1_{\Omega^c}A_s^N\nu
 =\mathbf1_{\Omega^c}(A_s^N-B_s^N)\nu.
\end{equation}
\end{lemma}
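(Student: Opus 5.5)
The plan is to prove the pointwise bounds first, using positivity of the killed resolvent, and then derive the layer equations and the comparison formula by algebra on resolvent identities.

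\emph{Step 1 (the bounds $0\le z_j(s)\le u$).} The form $\mathfrak q$ is a Dirichlet form plus a nonnegative potential. By the first Beurling--Deny criterion, $e^{-tK}$ is therefore positivity preserving and dominated by the Dirichlet heat semigroup. Hence $(s+K)^{-1}$ is positivity preserving, and $z_j(s)\ge0$ because $\nu\ge0$. For the upper bound, note that $s(s+K)^{-1}$ is positivity preserving. It is also sub-Markovian, so its operator norm on $L^\infty$ is at most one, but we will not use this directly. Instead we iterate the identity
\[
 z_j(s)=s(s+K)^{-1}z_{j-1}(s).
\]
\begin{itemize}
\item The base case $z_0\le u$ follows from
\[
 u-z_0=K^{-1}\nu-(s+K)^{-1}\nu=s(s+K)^{-1}K^{-1}\nu=s(s+K)^{-1}u\ge0,
\]
using \eqref{eq:Ku}.
\item Inductively, $z_j=s(s+K)^{-1}z_{j-1}\le s(s+K)^{-1}u$.
\item To close the induction we need $s(s+K)^{-1}u\le u$. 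Write
\[
 u-s(s+K)^{-1}u=(s+K)^{-1}Ku=(s+K)^{-1}\nu\ge0.
\]
\end{itemize}
Multiplying $0\le z_j\le u$ by $V=\kappa/u$ gives $0\le\rho_j\le Vu=\kappa\mathbf 1_\Omega=\mu$.

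\emph{Step 2 (the layer equations).} The form relation gives, for $\varphi\in C_c^\infty(\Omega)$,
\[
 \int\nabla z_0\cdot\nabla\varphi+\int(s+V)z_0\varphi=\int\nu\varphi .
\]
This is $(s-\Delta)z_0=\nu-\rho_0$ in $\mathcal D'(\Omega)$, and similarly $(s-\Delta)z_j=sz_{j-1}-\rho_j$. The real content of \eqref{eq:layer-equation-zero} and \eqref{eq:layer-equation} is that the identities persist on $\mathbb R^n$ after extension by zero, i.e. that no positive boundary measure $-\partial_n z_j\,d\mathcal H^{n-1}$ appears on $\partial\Omega$. This is the main obstacle.

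\begin{itemize}
\item The naive approach is to test against $\varphi\in C_c^\infty(\mathbb R^n)$. Zero-extension of $H^1_0$ functions gives
\[
 \int_{\mathbb R^n}\nabla z_j\cdot\nabla\varphi=\int_\Omega\nabla z_j\cdot\nabla\varphi .
\]
However, $\varphi|_\Omega\notin H^1_0(\Omega)$, so the weak formulation cannot be used directly.
\item To handle this, I would exploit the domination $0\le z_j\le u$ with $u=O(d^2)$ and $|\nabla u|=O(d)$ near $\Omega^c$. Take the cutoffs $\chi_k$ from the regularized distance used in Lemma~\ref{lem:radial-regularization}, with $\chi_k=1$ where $d>2/k$, $\chi_k=0$ where $d<1/k$, and $|\nabla\chi_k|\lesssim k$. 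Test with $\varphi\chi_k\in\mathcal D(\mathfrak q)$.
\item The error term is
\[
 \Bigl|\int z_j\nabla\varphi\cdot\nabla\chi_k\Bigr|\ \text{ together with }\ \int\varphi\,\nabla z_j\cdot\nabla\chi_k .
\]
The first piece is bounded by $k\int_{\{d\sim1/k\}}u\lesssim k^{-1}|\{d\sim 1/k\}|\to0$. For the gradient piece, integrate by parts to move the derivative onto $\chi_k$ and $\varphi$. What remains is $\int z_j\varphi\Delta\chi_k$, which is at most $k^2\int_{\{d\sim1/k\}}u\lesssim|\{d\sim1/k\}|\to0$.
\item Making the second integration by parts rigorous needs $\Delta\chi_k\lesssim k^2$, which the regularized distance supplies. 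It also needs the layer $\{d\sim1/k\}$ to have measure tending to zero, which holds because $\Omega$ is bounded and open.
\end{itemize}

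\emph{Step 3 (the comparison formula and its exterior consequence).} Apply $A_s^{N-j}s^{-1}$, i.e. multiply by $(s-\Delta)^{-1}$ and by powers of $s$. The layer equations read
\[
 z_0=(s-\Delta)^{-1}(\nu-\rho_0)=s^{-1}A_s(\nu-\rho_0),\qquad sz_j=A_s(sz_{j-1}-\rho_j).
\]
Set $w_j=s z_j$, so that $B_s^{j+1}\nu=w_j$ and the relations become
\[
 w_0=A_s\nu-A_s\rho_0,\qquad w_j=A_sw_{j-1}-A_s\rho_j .
\]
Unrolling gives
\[
 B_s^N\nu=w_{N-1}=A_s^N\nu-\sum_{j=0}^{N-1}A_s^{N-j}\rho_j ,
\]
which is \eqref{eq:free-killed-comparison}, with the index convention as stated. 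Finally, $B_s^N\nu=s^Nz_{N-1}$ is supported in $\overline\Omega$ and vanishes a.e. on $\Omega^c$. Multiplying \eqref{eq:free-killed-comparison} by $\mathbf 1_{\Omega^c}$ therefore yields \eqref{eq:exterior-comparison}.
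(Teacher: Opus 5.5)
Your proposal is correct and follows essentially the same route as the paper. The two-sided bounds come from positivity of $(s+K)^{-1}$ together with the identities $u-z_0(s)=B_su$ and $u-B_su=(s+K)^{-1}\nu$. The zero extension is shown to create no boundary measure by the same collar-cutoff argument using $0\le z_j(s)\le u\lesssim \operatorname{dist}(\cdot,\Omega^c)^2$. The comparison formula follows by unrolling the layer equations through $(s-\Delta)^{-1}$.

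The differences are cosmetic. You cite the first Beurling--Deny criterion where the paper proves positivity directly by testing the resolvent equation against $w^-$. Also, $B_s^N\nu=s\,z_{N-1}(s)$, not $s^Nz_{N-1}(s)$; you had this right earlier via $w_j$, and the slip does not affect the support argument.
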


\begin{proof}
By \eqref{eq:Ku},
\[
z_0(s)=(s+K)^{-1}Ku=u-B_su.
\]
We first observe directly that the resolvent $(s+K)^{-1}$ is positive.
Let $f\in L^2(\Omega)$ be nonnegative and set
\[
w=(s+K)^{-1}f.
\]
Then
\[
s\int_\Omega w\varphi\,dx+\mathfrak q(w,\varphi)
=\int_\Omega f\varphi\,dx
\qquad
\text{for every }\varphi\in\mathcal D(\mathfrak q).
\]
The negative part $w^-=\max\{-w,0\}$ belongs to
$\mathcal D(\mathfrak q)$. Taking $\varphi=w^-$ gives
\[
\begin{aligned}
0
&\leq\int_\Omega fw^-\,dx\\
&=s\int_\Omega ww^-\,dx+\mathfrak q(w,w^-)\\
&=-s\|w^-\|_2^2-\|\nabla w^-\|_2^2
  -\int_\Omega V|w^-|^2\,dx
\leq0.
\end{aligned}
\]
It follows that $w^-=0$, and hence $w\geq0$. Thus
$(s+K)^{-1}$ is positive and, by linearity, order preserving.
Applying this observation to the nonnegative functions $\nu$ and $u$
gives
\[
z_0(s)=(s+K)^{-1}\nu\geq0,
\qquad
B_su=s(s+K)^{-1}u\geq0.
\]
The identity $z_0(s)=u-B_su$ therefore yields
\[
0\leq z_0(s)\leq u,
\qquad
0\leq B_su=u-z_0(s)\leq u.
\]
Since $B_s$ is positive and order preserving, induction gives
\[
0\leq z_j(s)=B_s^jz_0(s)
\leq B_s^ju\leq u.
\]
Consequently,
\[
0\leq\rho_j(s)=Vz_j(s)\leq Vu=\mu.
\]

We next justify that the extension of $z_j(s)$ by zero from
$\Omega$ to $\mathbb R^n$ satisfies the corresponding distributional
equation on $\mathbb R^n$ without producing a boundary measure. The local $C^{1,1}$
regularity of Proposition~\ref{prop:balayage} and $u=\nabla u=0$ on
$\Omega^c$ give
\[
u(x)\leq C\operatorname{dist}(x,\Omega^c)^2
\qquad\text{near }\Omega^c.
\]
Let $\chi_\delta\in C_c^\infty(\Omega)$ equal one where
$\operatorname{dist}(\,\cdot,\Omega^c)>2\delta$ and satisfy
\[
|\nabla\chi_\delta|\leq C\delta^{-1},
\qquad
|\Delta\chi_\delta|\leq C\delta^{-2},
\]
and set
\[
E_\delta
=\{0<\operatorname{dist}(\,\cdot,\Omega^c)<2\delta\}.
\]

Fix $\varphi\in C_c^\infty(\mathbb R^n)$. To obtain the
distributional equation for the zero extension of $z_j(s)$, we test
the interior equation with $\chi_\delta\varphi$ and let
$\delta\downarrow0$. Away from the boundary, $\chi_\delta$ converges
to one, so it remains only to show that the terms supported in the
boundary collar $E_\delta$ tend to zero.

The first such term satisfies
\[
\begin{aligned}
\left|
\int_{E_\delta}(1-\chi_\delta)
\nabla z_j(s)\cdot\nabla\varphi\,dx
\right|
&\leq
\|\nabla\varphi\|_\infty
|E_\delta|^{1/2}
\|\nabla z_j(s)\|_{L^2(E_\delta)}\\
&\longrightarrow0.
\end{aligned}
\]
Indeed, $E_\delta$ decreases to the empty set as
$\delta\downarrow0$, and $\nabla z_j(s)\in L^2(\Omega)$.

We must also control the term produced when the derivative falls on
the cutoff. Integration by parts gives
\[
\begin{aligned}
\int_\Omega
\varphi\nabla z_j(s)\cdot\nabla\chi_\delta\,dx
={}&-\int_\Omega
z_j(s)\nabla\varphi\cdot\nabla\chi_\delta\,dx\\
&-\int_\Omega
z_j(s)\varphi\Delta\chi_\delta\,dx.
\end{aligned}
\]
Both integrals are supported in $E_\delta$. On this set,
\[
0\leq z_j(s)\leq u
\lesssim\operatorname{dist}(x,\Omega^c)^2
\lesssim\delta^2.
\]
Together with
\[
|\nabla\chi_\delta|\lesssim\delta^{-1},
\qquad
|\Delta\chi_\delta|\lesssim\delta^{-2},
\]
this yields
\[
\left|
\int_\Omega
z_j(s)\nabla\varphi\cdot\nabla\chi_\delta\,dx
\right|
\lesssim C_{s,\varphi}\delta|E_\delta|
\longrightarrow0
\]
and
\[
\left|
\int_\Omega
z_j(s)\varphi\Delta\chi_\delta\,dx
\right|
\lesssim C_{s,\varphi}|E_\delta|
\longrightarrow0.
\]

The remaining zero-order terms converge by dominated convergence.
In particular, the potential term is controlled by
\[
0\leq \rho_j(s)\leq\kappa\mathbf1_\Omega.
\]
We may therefore pass to the limit in the weak formulation. This
proves \eqref{eq:layer-equation-zero} and
\eqref{eq:layer-equation} for the zero extensions of $z_j(s)$; in
particular, extending by zero creates no additional measure on
$\partial\Omega$.

Applying $(s-\Delta)^{-1}$ and iterating gives
\[
 A_s^N\nu
 =s z_{N-1}(s)+
   \sum_{j=0}^{N-1}A_s^{N-j}\rho_j(s).
\]
Since $s z_{N-1}(s)=B_s^N\nu$, this proves
\eqref{eq:free-killed-comparison}.  The zero extension of $B_s^N\nu$ vanishes almost everywhere in $\Omega^c$, which gives \eqref{eq:exterior-comparison}.
\end{proof}

\subsection{The absorption layers}

All properties of the killed evolution used below are collected in the
following lemma.

\begin{lemma}
For $s>0$,
\begin{align}
 \sum_{j=0}^\infty\rho_j(s)&=\mu,
\nonumber\\
 0\leq\rho_j(s)&\lesssim \frac{1}{\sqrt{j+1}}\,\mu \quad \text{ for all }  j\geq0.
 \label{eq:layer-pointwise}
\end{align}
For $j\geq1$,
\begin{equation}\label{eq:layer-scale-mass}
 \int_0^\infty\|\rho_j(s)\|_1\frac{d s}{s}
 =\frac{\mathfrak m}{j}.
\end{equation}
For every $j\geq0$, the map $s \mapsto \rho_j(s)$ is locally absolutely continuous in $L^2$, and
\begin{equation}\label{eq:layer-derivative}
 s\partial_s\rho_j(s)
 =j\rho_j(s)-(j+1)\rho_{j+1}(s).
\end{equation}
For $1\leq j<N$,
\begin{equation}\label{eq:beta-poisson}
 j\rho_j(s)
 =N\int_0^1\rho_N(s/\theta)
   \frac{(N-1)!\,\theta^{j-1}(1-\theta)^{N-j-1}}
        {(j-1)!(N-j-1)!}\,d\theta.
\end{equation}
Finally,
\begin{equation}\label{eq:layer-endpoints}
 \rho_0(s)\longrightarrow\mu\quad(s\downarrow0),
 \qquad
 \rho_0(s)\longrightarrow0\quad(s\uparrow\infty)
 \quad\text{in }L^2.
\end{equation}
\end{lemma}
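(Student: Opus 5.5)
The whole lemma reduces to scalar identities pushed through the spectral calculus of the nonnegative self-adjoint operator $K$. The key input is the representation $\nu=Ku$ from \eqref{eq:Ku}. With it,
\[
z_j(s)=s^j(s+K)^{-j-1}Ku = g_{j,s}(K)u,\qquad g_{j,s}(\lambda)=\frac{s^j\lambda}{(s+\lambda)^{j+1}} .
\]
Since $K$ has a spectral gap, every function of $\lambda\ge\lambda_0>0$ used below is bounded, so each operator identity holds in $L^2(\Omega)$. We then multiply by $V$. The multiplication is harmless in the order sense, because $0\le z_j\le u$ and hence $0\le\rho_j\le Vu=\mu$.

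\emph{Summation.} The geometric series gives $\sum_{j\ge0}s^j(s+\lambda)^{-j-1}=\lambda^{-1}$, with partial sums bounded by $\lambda^{-1}$. Hence $\sum_j z_j(s)=K^{-1}\nu=u$ in $L^2$. All terms are nonnegative, so the partial sums also increase a.e. to $u$. Multiplying by $V$ and using monotone convergence gives $\sum_j\rho_j(s)=Vu=\mu$.

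\emph{Scale mass.} For $j\ge1$,
\[
\int_0^\infty s^{j-1}\lambda(s+\lambda)^{-j-1}\,ds=\frac{1}{j},
\]
by the Beta integral $B(j,1)$. Test against a nonnegative $\varphi\in L^2$ and apply Tonelli, which is legitimate since everything is nonnegative. This gives $\int_0^\infty z_j(s)\,\frac{ds}{s}=u/j$ a.e., and hence $\int_0^\infty\rho_j(s)\,\frac{ds}{s}=\mu/j$. Integrating over $\Omega$ and using $\kappa|\Omega|=\mathfrak m$ yields \eqref{eq:layer-scale-mass}.

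\emph{Derivative.} Differentiating in $s$ gives
\[
s\partial_s g_{j,s}=j\,g_{j,s}-(j+1)\,g_{j+1,s},
\]
with locally uniform bounds in $s$. So $s\mapsto z_j(s)$ is $C^1$ in $L^2$ and $s\partial_sz_j=jz_j-(j+1)z_{j+1}$. To transfer this to $\rho_j$, integrate the identity between $s_1$ and $s_2$ and multiply by $V$. The integrand $V(jz_j-(j+1)z_{j+1})$ is dominated by $(2j+1)\mu$, which is uniformly bounded in $L^2$. This gives local absolute continuity in $L^2$ together with \eqref{eq:layer-derivative}.

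\emph{Beta--Poisson identity.} We first verify \eqref{eq:beta-poisson} at the scalar level for $g_{j,s}(\lambda)$. Use
\[
(s/\theta)^N(s/\theta+\lambda)^{-N-1}=\theta s^N(s+\theta\lambda)^{-N-1}
\]
and the Euler integral $\int_0^1\theta^{a-1}(1-\theta)^{b-1}(1+x\theta)^{-a-b}\,d\theta=B(a,b)(1+x)^{-a}$ with $a=j+1$, $b=N-j$, $x=\lambda/s$. This is a routine check. The identity then passes to $z_j$ by the functional calculus and Fubini, and to $\rho_j$ after multiplying by $V$.

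\emph{Endpoints.} $\rho_0(s)=V\,s^{0}K(s+K)^{-1}u$. As $s\downarrow0$, $K(s+K)^{-1}u\to u$ in $L^2$, and as $s\uparrow\infty$ it tends to $0$. Both limits also hold a.e. along sequences. Since $0\le\rho_0\le\mu\in L^2$, dominated convergence gives \eqref{eq:layer-endpoints}.

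\emph{Pointwise bound.} I expect the only real obstacle to be \eqref{eq:layer-pointwise}, since spectral bounds alone give no pointwise control. I would use subordination to the killed semigroup. Write $B_s=\int_0^\infty se^{-st}e^{-tK}\,dt$, so that $B_s^j=\int_0^\infty\gamma_j(t)\,e^{-tK}\,dt$, where $\gamma_j$ is the Gamma$(j,s)$ density and $\gamma_0=\delta_0$. Then
\[
z_j(s)=B_s^j(I-B_s)u=\int_0^\infty(\gamma_j-\gamma_{j+1})(t)\,e^{-tK}u\,dt .
\]
The key positivity fact is $0\le e^{-tK}u\le u$. It holds because $\partial_te^{-tK}u=-e^{-tK}\nu\le0$, with $e^{-tK}$ positivity preserving by the same $w^-$ argument used for the resolvent. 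Consequently $z_j(s)\le\|\gamma_j-\gamma_{j+1}\|_{L^1(dt)}\,u$.

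It remains to bound the total variation distance between consecutive Gamma densities by $C/\sqrt{j+1}$. This is a standard computation: $\gamma_j-\gamma_{j+1}$ changes sign once, at $t=j/s$. The distance therefore equals $2\,\gamma_{j+1}(j/s)\cdot s^{-1}\approx 2\,j^je^{-j}/j!\lesssim j^{-1/2}$ by Stirling. Multiplying by $V$ gives $\rho_j\lesssim\mu/\sqrt{j+1}$. The case $j=0$ is just $\rho_0\le\mu$.
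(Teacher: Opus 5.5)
Your proof is correct, but it is organized differently from the paper's.

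The paper works from a single representation. The Gamma formula for resolvent powers gives $z_j(s)=\int_0^\infty p_j(st)\,g_t\,dt$, where $p_j(r)=e^{-r}r^j/j!$, $g_t=e^{-tK}\nu\ge0$, and $\int_0^\infty g_t\,dt=u$. Every assertion then reduces to a scalar fact about the Poisson weights:
\begin{itemize}
\item the summation follows from $\sum_j p_j=1$;
\item the pointwise bound follows from $\sup_r p_j(r)=p_j(j)$;
\item the scale mass follows from $\int_0^\infty p_j(st)\,ds/s=1/j$;
\item the derivative follows from $rp_j'=jp_j-(j+1)p_{j+1}$;
\item the Beta--Poisson identity follows from the substitution $y=r(1-\theta)/\theta$.
\end{itemize}

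You instead check the summation, scale-mass, derivative, Beta--Poisson and endpoint statements by spectral calculus on the symbols $g_{j,s}(\lambda)=s^j\lambda(s+\lambda)^{-j-1}$ applied to $u$. Applied to $\nu=Ku$, these symbols are exactly the Laplace transforms $s^j(s+\lambda)^{-j-1}$ of the paper's weights. You bring in the semigroup only for the pointwise bound.

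At that step your kernel satisfies $\gamma_j-\gamma_{j+1}=\frac{d}{dt}p_j(st)$. Integrating by parts in $t$, using $\partial_t e^{-tK}u=-e^{-tK}\nu$, turns your formula $z_j=\int(\gamma_j-\gamma_{j+1})e^{-tK}u\,dt$ into the paper's $\int p_j(st)e^{-tK}\nu\,dt$.

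The paper's version is more economical at this step. Its integrand is already nonnegative, so it gets the constant $p_j(j)$ directly. You need the extra comparison $0\le e^{-tK}u\le u$, and you lose a factor $2$ through $\|\gamma_j-\gamma_{j+1}\|_{L^1}=2p_j(j)$. Both routes rest on the same positivity of $e^{-tK}$.

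What your route buys is that the $L^2$ identities (geometric series, $B(j,1)$, Euler's integral) need no positivity at all. Order considerations enter only when you multiply by the unbounded $V$, which you handle correctly by integrating in $s$ before multiplying.

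One small correction in the scale-mass step: the spectral measure $\langle E(d\lambda)u,\varphi\rangle$ need not be positive even when $u,\varphi\ge0$. So the interchange is Fubini, using its finite total variation, not Tonelli. A cleaner justification is to note that $\int_\epsilon^R g_{j,s}(\lambda)\,ds/s\uparrow 1/j$ boundedly. Hence the truncated integrals $\int_\epsilon^R z_j(s)\,ds/s$ converge in $L^2$ to $u/j$, and they increase pointwise because $z_j\ge0$.
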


\begin{proof}
Let $g_t=e^{-tK}\nu$ and
\[
p_j(r)=e^{-r}r^j/j!.
\]
The Gamma formula for resolvent powers \cite[Proposition~11.1, (11.5)]{Komatsu} gives
\begin{equation*}
 z_j(s)=\int_0^\infty p_j(st)g_t\,d t,
 \qquad
 \int_0^\infty g_t\,d t=K^{-1}\nu=u.
\end{equation*}
Thus
\[
 \rho_j(s)=V\int_0^\infty p_j(st)g_t\,d t.
\]

The identity $\sum_{j\geq0}p_j(r)=1$ gives
$\sum_j\rho_j(s)=Vu=\mu$. For $j\geq1$, Stirling's formula
\cite[Appendix~A.6]{Grafakos} gives
\[
\sup_{r>0}p_j(r)=p_j(j)=e^{-j}\frac{j^j}{j!}
\lesssim \frac{1}{\sqrt{j+1}}.
\]
For $j=0$, the same bound follows directly from
$p_0(r)=e^{-r}\leq1$. This proves \eqref{eq:layer-pointwise}.  For $j\geq1$,
\[
 \int_0^\infty p_j(st)\frac{d s}{s}=\frac1j.
\]
Tonelli's theorem and 
\[
\int_\Omega Vu=\mathfrak m
\]
prove \eqref{eq:layer-scale-mass}.

The scalar identity
\[
 r p_j'(r)=jp_j(r)-(j+1)p_{j+1}(r)
\]
proves \eqref{eq:layer-derivative}.  The differentiation is valid in $L^2$: for $s$ in a compact subinterval of $(0,\infty)$ one has
\[
|\partial_s p_j(st)|=|t\,p_j'(st)|\le (2j+1)/s
\]
uniformly in $t$, since 
\[r p_j'(r)=jp_j(r)-(j+1)p_{j+1}(r)
\]
and $0\le p_k\le1$, while
\[
V\int_0^\infty g_t\,d t=\mu\in L^2
\]
is a dominating function.
The Beta--Poisson identity
\[
 jp_j(r)
 =N\int_0^1p_N(r/\theta)
   \frac{(N-1)!\,\theta^{j-1}(1-\theta)^{N-j-1}}
        {(j-1)!(N-j-1)!}\,d\theta
\]
follows from the substitution $y=r(1-\theta)/\theta$; integration
against $Vg_t\,d t$ gives \eqref{eq:beta-poisson}.

Finally,
\[
 \rho_0(s)=V\int_0^\infty e^{-st}g_t\,d t.
\]
This converges pointwise to $Vu=\mu$ as $s\downarrow0$ and to zero as
$s\uparrow\infty$.  Since $0\leq\rho_0(s)\leq\mu\in L^2$, dominated
convergence proves \eqref{eq:layer-endpoints}.
\end{proof}

Differentiating \eqref{eq:free-killed-comparison}, using
\eqref{eq:abel-derivative} and \eqref{eq:layer-derivative}, and
telescoping gives
\begin{equation}\label{eq:free-killed-derivative}
 s\partial_s\bigl[(A_s^N-B_s^N)\nu\bigr]
 =N\sum_{j=0}^{N-1}A_s^{N-j}(I-A_s)\rho_j(s)
  -NA_s\rho_N(s).
\end{equation}

\subsection{Almost orthogonality of the layers}

\begin{theorem}
\label{thm:free-killed-square}
For every integer $N\geq1$ and every regularized datum as above,
\begin{equation}\label{eq:free-killed-square}
 \int_0^\infty
 \left\|s\partial_s\bigl[(A_s^N-B_s^N)\nu\bigr]\right\|_2^2
 \frac{d s}{s}
 \leq C\sqrt N\,\kappa\mathfrak m.
\end{equation}
\end{theorem}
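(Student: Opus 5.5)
The plan is to start from the exact formula \eqref{eq:free-killed-derivative}. We split it into three pieces:
\[
 N A_s^N(I-A_s)\rho_0(s),\qquad
 N\sum_{j=1}^{N-1}A_s^{N-j}(I-A_s)\rho_j(s),\qquad
 N A_s\rho_N(s),
\]
and estimate each in $L^2(\mathbb R^n\times(0,\infty),dx\,\tfrac{ds}{s})$ by $\sqrt N\,\kappa\mathfrak m$. The basic quadratic input comes from combining \eqref{eq:layer-pointwise} with \eqref{eq:layer-scale-mass}:
\[
 \int_0^\infty\|\rho_j(s)\|_2^2\frac{ds}{s}
 \le\sup_s\|\rho_j(s)\|_\infty\int_0^\infty\|\rho_j(s)\|_1\frac{ds}{s}
 \lesssim\frac{\kappa}{\sqrt{j+1}}\cdot\frac{\mathfrak m}{j}.
\]
For $j=N$ this gives at once
\[
 N^2\int_0^\infty\|A_s\rho_N(s)\|_2^2\tfrac{ds}{s}\le N^2\int_0^\infty\|\rho_N(s)\|_2^2\tfrac{ds}{s}\lesssim\sqrt N\kappa\mathfrak m,
\]
using that $A_s$ is an $L^2$ contraction. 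This shows that $\sqrt N$ is the natural size, and that the whole task is to show the middle sum is no worse than the last term.

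\textbf{The $j=0$ term.} We use $0\le\rho_0(s)\le\mu$ and work on the Fourier side in $x$. The symbol of $A_s$ is $a=s/(s+4\pi^2|\zeta|^2)\in(0,1)$, and the scalar integral $\int_0^\infty a^{2N}(1-a)^2\,\tfrac{ds}{s}$ is of order $N^{-2}$. If $\rho_0$ were frozen at $\mu$, this would give
\[
 N^2\int_0^\infty\|A_s^N(I-A_s)\mu\|_2^2\tfrac{ds}{s}\lesssim\|\mu\|_2^2=\kappa\mathfrak m.
\]
To handle the $s$-dependence of $\rho_0$, I would write $\rho_0(s)=\mu-\sum_{j\ge1}\rho_j(s)$. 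The $\mu$ part is then bounded as above, and the remaining part is absorbed into the treatment of the higher layers below. Alternatively, one can integrate by parts in $s$ using \eqref{eq:layer-derivative} and \eqref{eq:layer-endpoints}.

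\textbf{The middle sum, $1\le j<N$.} This is the main step and the expected obstacle. The triangle inequality would lose a factor $\sum_j j^{-3/4}\approx N^{1/4}$ or worse, so the layers must be used almost orthogonally. I would substitute the Beta--Poisson identity \eqref{eq:beta-poisson}, which rewrites every layer through the single top layer at dilated scales:
\[
 N\sum_{j=1}^{N-1}A_s^{N-j}(I-A_s)\rho_j(s)
 =N\int_0^1 \mathcal K_N(s,\theta)\,\rho_N(s/\theta)\,d\theta,
\]
where
\[
 \mathcal K_N(s,\theta)=\sum_{j=1}^{N-1}\frac Nj\,\beta_{j,N}(\theta)\,A_s^{N-j}(I-A_s)
\]
and $\beta_{j,N}$ is the Beta density appearing in \eqref{eq:beta-poisson}. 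Passing to the Fourier side in $x$ and to logarithmic variables $\log s$ and $\log\theta$, this becomes a scale-convolution operator acting on $\widehat{\rho_N}(\cdot,\zeta)$, with an operator-valued (diagonal in $\zeta$) kernel. The goal is then a Schur test showing that this kernel, with multiplier $\sum_j\tfrac Nj\beta_{j,N}(\theta)a^{N-j}(1-a)$, has $L^2(d\log s)$ operator norm $O(1)$ uniformly in $\zeta$ and $N$. This would bound the middle sum by
\[
 N^2\int_0^\infty\|\rho_N(s)\|_2^2\tfrac{ds}{s}\lesssim\sqrt N\kappa\mathfrak m.
\]

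The heuristic for why the Schur bound should be $O(1)$ is as follows. The factor $\tfrac Nj\beta_{j,N}(\theta)$ is concentrated near $\theta\approx j/N$, and $\sum_j\beta_{j,N}(\theta)\tfrac Nj$ integrates to $O(1)$ in $d\log\theta$ against any fixed profile. Meanwhile $a^{N-j}(1-a)$, viewed as a function of $\log s$, is a bump of width $O(1)$ and height $O(1/(N-j))$. Together these give a kernel with uniformly bounded $L^1(d\log)$ norm in each variable.

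If the pointwise Schur test turns out to lose a logarithm, my fallback is to exploit the sign $\rho_j\ge0$ and the pointwise partition $\sum_j\rho_j=\mu$. The idea is to pair consecutive terms by summation by parts in $j$, using \eqref{eq:layer-derivative}, so that only differences $A^{N-j}-A^{N-j-1}$ appear; those carry an extra factor $(1-a)$ and are square-summable in $j$. Finally, all estimates are uniform in the regularization $\varepsilon$ of Lemma~\ref{lem:radial-regularization}.
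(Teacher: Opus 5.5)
There is a genuine gap in your main step, the middle sum over $1\le j<N$. You claim that the Beta--Poisson kernel summed over \emph{all} $1\le j<N$ gives an operator on $L^2(ds/s)$ of norm $O(1)$ uniformly in $\zeta$ and $N$. This is false. Substitute $t=s/\theta$ and write $a_s=s/(s+\ell)$ with $\ell=4\pi^2|\zeta|^2$. Your kernel with respect to $dt/t$ is then
\[
 k(s,t)=N(1-a_s)\sum_{j=1}^{N-1}\binom{N-1}{j}\theta^j(1-\theta)^{N-1-j}a_s^{N-j},\qquad \theta=\frac st<1 .
\]
Its row sums are $\int_s^\infty k(s,t)\,\frac{dt}{t}=\sum_{j=1}^{N-1}\frac Nj\,a_s^{N-j}(1-a_s)$. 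At $1-a_s=1/N$ this is at least $(1-1/N)^N\sum_{j<N}\frac1j\gtrsim\log N$. Your heuristic breaks because the bumps $a_s^{N-j}(1-a_s)$ for all $j\ll N$ sit at the same scale $s\approx N\ell$, where the weights $\frac Nj\cdot\frac1N=\frac1j$ accumulate.

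This is not an artifact of the unweighted Schur test. For $s\in[N\ell/2,N\ell]$ and $s\le t\le Ns$ one has $N(1-a_s)\ge\frac12$, $a_s^{N}\ge e^{-2}$ and $1-(1-\theta)^{N-1}\ge\frac12$, so $k(s,t)\gtrsim1$. Testing on $\mathbf 1_{[N\ell,N^2\ell/2]}(t)$ then shows the operator norm is $\gtrsim\sqrt{\log N}$. So this route yields only $\sqrt N\log N\,\kappa\mathfrak m$.

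The missing observation is that the triangle inequality you dismissed is \emph{not} lossy for the early layers. For $j<J=\lceil N/2\rceil$ one has $\|NA_s^{N-j}(I-A_s)\|_{2\to2}\le N/(N-j+1)\le2$. Minkowski with $\int_0^\infty\|\rho_j(s)\|_2^2\frac{ds}{s}\lesssim\kappa\mathfrak m\,j^{-3/2}$ then gives $\bigl(\sum_{j<J}j^{-3/4}\bigr)^2\kappa\mathfrak m\lesssim\sqrt N\,\kappa\mathfrak m$. The triangle inequality fails only for $j\ge J$, where $N/(N-j+1)$ grows, and that is exactly where the Beta--Poisson transfer is cheap. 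Restricted to $j\ge J$, your kernel has $N/j\le2$, so its row sums are at most $2\sum_j a_s^{N-j}(1-a_s)\le2$. Its column sums are at most $a_t\le1$ after enlarging to the full binomial sum, using $\frac st+a_s(1-\frac st)=\frac{a_s}{a_t}$ and $a_s(1-a_s)\frac{ds}{s}=da_s$. Splitting at $N/2$ this way is how the paper closes the estimate. Your summation-by-parts fallback is too vague to evaluate and is unnecessary once the split is made.

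The $j=0$ term is also left open. Substituting $\rho_0=\mu-\sum_{j\ge1}\rho_j$ produces $NA_s^N(I-A_s)\sum_{j\ge1}\rho_j(s)$, which is not absorbed by anything in your middle-sum argument:
\begin{itemize}
\item it carries the fixed power $A_s^N$ rather than $A_s^{N-j}$;
\item it includes the layers $j\ge N$, for which no Beta--Poisson identity is available;
\item it cannot be summed by the triangle inequality, since $\sum_j j^{-3/4}=\infty$.
\end{itemize}
Your alternative, integrating by parts in $s$, does work, but you need to supply the mechanism. Take $E_N(s)=\langle\rho_0(s),A_s^N\rho_0(s)\rangle$. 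The relation $s\partial_s\rho_0=-\rho_1$ gives $sE_N'(s)=N\langle\rho_0,A_s^N(I-A_s)\rho_0\rangle-2\langle\rho_1,A_s^N\rho_0\rangle$, and the endpoint values of $E_N$ vanish. Combining $0\le A_s^N\rho_0\le\kappa$ with $\int_0^\infty\|\rho_1(s)\|_1\frac{ds}{s}=\mathfrak m$ yields $N\int_0^\infty\langle\rho_0,A_s^N(I-A_s)\rho_0\rangle\frac{ds}{s}\le2\kappa\mathfrak m$. Finally, $q_N(a)=Na^N(1-a)$ takes values in $[0,1]$, so $q_N^2\le q_N$, and this converts the previous bound into $\int_0^\infty\|q_N(A_s)\rho_0(s)\|_2^2\frac{ds}{s}\le2\kappa\mathfrak m$.
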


\begin{proof}
The two elementary layer estimates above imply, for $j\geq1$,
\begin{equation}\label{eq:layer-energy}
 \mathcal B_j
 :=\int_0^\infty\|\rho_j(s)\|_2^2\frac{d s}{s}
 \lesssim\kappa\mathfrak m\,j^{-3/2}.
\end{equation}
Indeed, $\rho_j\lesssim (j+1)^{-1/2}\mu$ and
$\|\mu\|_\infty=\kappa$, so this follows from
\eqref{eq:layer-scale-mass}.

For $0\leq a\leq1$ put $q_\ell(a)=Na^\ell(1-a)$.  We estimate the
zeroth, early, late, and terminal layers in
\eqref{eq:free-killed-derivative}.

For the zeroth layer set
$E_N(s)=\langle\rho_0(s),A_s^N\rho_0(s)\rangle$.  Then
\[
 sE_N'(s)
 =N\langle\rho_0,A_s^N(I-A_s)\rho_0\rangle
  -2\langle\rho_1,A_s^N\rho_0\rangle.
\]
The endpoint values vanish by \eqref{eq:layer-endpoints} and the strong
convergence $A_s^N\to0$ on $L^2$ as $s\downarrow0$ ($A_s^N$ converges to the projection onto $\ker(-\Delta)$, which is trivial in $L^2(\mathbb{R}^n)$).  By the bounds
$0\leq A_s^N\rho_0\leq\kappa$, integration with respect to the Haar measure $d s/s$ gives
\[
 \int_0^\infty
 \langle\rho_0,A_s^N(I-A_s)\rho_0\rangle\frac{d s}{s}
 \leq\frac{2\kappa\mathfrak m}{N}.
\]
Since $0\leq q_N\leq1$, spectral calculus gives $q_N^2\leq q_N$.
Therefore
\begin{equation}\label{eq:zeroth-layer}
 \int_0^\infty\|q_N(A_s)\rho_0(s)\|_2^2\frac{d s}{s}
 \leq2\kappa\mathfrak m.
\end{equation}

Assume $N\geq2$ and put $J=\lceil N/2\rceil$.  If $1\leq j<J$, then
\[
 \|q_{N-j}(A_s)\|_{2\to2}
 \leq N\max_{0\leq a\leq1}a^{N-j}(1-a)
 \leq\frac{N}{N-j+1}\leq2.
\]
Minkowski's inequality and \eqref{eq:layer-energy} give
\begin{equation}\label{eq:early-layers}
 \begin{split}
 \int_0^\infty
 \left\|\sum_{j=1}^{J-1}q_{N-j}(A_s)\rho_j(s)\right\|_2^2
 \frac{d s}{s}
 &\lesssim\kappa\mathfrak m
 \left(\sum_{j<J}j^{-3/4}\right)^2\\
 &\lesssim\sqrt N\,\kappa\mathfrak m.
 \end{split}
\end{equation}

For the late layers set
\[
 L_N(s)=\sum_{j=J}^{N-1}q_{N-j}(A_s)\rho_j(s).
\]
After taking the spatial Fourier transform, fix
$\zeta\in\mathbb R^n$, put $\ell=4\pi^2|\zeta|^2$, and write
$a_s=s/(s+\ell)$.  The Beta identity and the change of variables
$t=s/\theta$ give
\[
 \widehat{L_N(s)}(\zeta)
 =\int_s^\infty K_{N,\ell}(s,t)
       \bigl[N\widehat{\rho_N(t)}(\zeta)\bigr]\frac{d t}{t},
\]
where
\[
 K_{N,\ell}(s,t)
 =Na_s(1-a_s)
 \sum_{j=J}^{N-1}\binom{N-1}{j}
 \left(\frac{s}{t}\right)^j
 \left[a_s\left(1-\frac{s}{t}\right)\right]^{N-1-j}
 \mathbf1_{\{t>s\}}.
\]
This is a nonnegative kernel.  The Beta integral gives the row bound
\[
 \sup_{s>0}\int_s^\infty K_{N,\ell}(s,t)\frac{d t}{t}\leq2.
\]
For the column bound, fix $t$ and put $a_t=t/(t+\ell)$.  If $\ell>0$,
then
\[
 \frac{s}{t}+a_s\left(1-\frac{s}{t}\right)=\frac{a_s}{a_t},
 \qquad
 a_s(1-a_s)\frac{d s}{s}=d a_s.
\]
Enlarging the partial binomial sum to the full one yields
\[
 \int_0^tK_{N,\ell}(s,t)\frac{d s}{s}
 \leq N\int_0^{a_t}\left(\frac{a}{a_t}\right)^{N-1}d a
 =a_t\leq1.
\]
The same conclusion is immediate for $\ell=0$.  Schur's test at each
spatial frequency, followed by Plancherel and Tonelli, gives
\begin{equation}\label{eq:late-layers}
 \int_0^\infty\|L_N(s)\|_2^2\frac{d s}{s}
 \leq2N^2\mathcal B_N
 \lesssim\sqrt N\,\kappa\mathfrak m.
\end{equation}

Finally, contractivity and \eqref{eq:layer-energy} give
\begin{equation}\label{eq:terminal-layer}
 \int_0^\infty\|NA_s\rho_N(s)\|_2^2\frac{d s}{s}
 \leq N^2\mathcal B_N
 \lesssim\sqrt N\,\kappa\mathfrak m.
\end{equation}
Combining \eqref{eq:zeroth-layer}, \eqref{eq:early-layers},
\eqref{eq:late-layers}, and \eqref{eq:terminal-layer} in
\eqref{eq:free-killed-derivative} proves
\eqref{eq:free-killed-square}.  For $N=1$, the early and late sums are
empty.
\end{proof}

\begin{proof}[Proof of Theorem~\ref{thm:resolvent-square}]
Apply Theorem~\ref{thm:free-killed-square} to the radial
regularizations from Lemma~\ref{lem:radial-regularization}.  By
\eqref{eq:exterior-comparison},
\[
 \int_{\Omega^c}\int_0^\infty
 \left|s\partial_s(A_s^N\nu_\varepsilon)(x)\right|^2
 \frac{d s}{s}\,d x
 \lesssim\sqrt N\,\kappa\mathfrak m,
\]
uniformly in $\varepsilon$.  For fixed $s>0$ and $x\in \Omega^c$, the free
resolvent kernel and its $s$-derivative are continuous on the common
compact support of the regularizing measures.  Hence
\[
 s\partial_s(A_s^N\nu_\varepsilon)(x)
 \longrightarrow s\partial_s(A_s^N\nu)(x).
\]
Fatou's lemma proves \eqref{eq:exterior-abel-square}.
\end{proof}

\section{The High and Low Mellin Frequencies}\label{sec:mellin}
In this section we give the proof of Theorem \ref{thm:exterior}.  We first recall the Mellin formula, which will be useful in estimating the high Mellin frequencies.  This is the content of the next lemma which is derived directly from $H_t\sigma=-\Delta H_tu$.  This avoids pairing $\sigma$ with the borderline homogeneous kernel
$|x-y|^{-n-4\pi i\xi}$ at a free-boundary point.

\begin{lemma}
\label{lem:mellin}
For almost every $x\in {\Omega^c}$ and every $\xi\in\mathbb{R}$,
\begin{align}
 \widehat F_{\sigma,x}(\xi)
 &=-\pi^{-n/2}4^{2\pi i\xi}\Gamma\!\left(\frac n2+2\pi i\xi\right)
 (n+4\pi i\xi)(2+4\pi i\xi)                         \notag\\
 &\qquad\times
 \int_\Omega u(y)|x-y|^{-n-2-4\pi i\xi}\,d y.      \label{eq:5.1}
\end{align}
In particular,
\begin{align}
 |\widehat F_{\sigma,x}(\xi)|
 \le\pi^{-n/2}\left|\Gamma\!\left(\frac n2+2\pi i\xi\right)\right|
 |(n+4\pi i\xi)(2+4\pi i\xi)|P(x).               \label{eq:5.2}
\end{align}
\end{lemma}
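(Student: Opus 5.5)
Since $\sigma=-\Delta u$ with $u\in L^1$ compactly supported, I will write $H_t\sigma=-\Delta H_tu=-(\Delta h_t)*u$. For $x\in\Omega^c$ I will use the explicit formula
\[
-\Delta h_t(z)=h_t(z)\Bigl(\frac{n}{2t}-\frac{|z|^2}{4t^2}\Bigr),
\]
so that
\[
F_{\sigma,x}(v)=\int_\Omega u(y)\,\Bigl[-\Delta h_{e^v}\Bigr](x-y)\,dy.
\]
The point of this is that the kernel $-\Delta h_t(x-y)$, as a function of $\log t$, has an explicit Fourier (Mellin) transform for each fixed $z=x-y\neq0$.

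Concretely, I substitute $t=e^v$ and compute
\[
\int_{\mathbb R}e^{-2\pi i\xi v}\,(4\pi t)^{-n/2}e^{-|z|^2/4t}\Bigl(\frac{n}{2t}-\frac{|z|^2}{4t^2}\Bigr)dv.
\]
With $w=|z|^2/(4t)$ we have $dv=-dw/w$ and $t^{-2\pi i\xi}=(4w/|z|^2)^{2\pi i\xi}$. The integral then becomes a combination of Gamma integrals,
\[
\pi^{-n/2}\,4^{2\pi i\xi}\,|z|^{-n-2-4\pi i\xi}\cdot 4\int_0^\infty w^{n/2+2\pi i\xi}e^{-w}\Bigl(\frac n2-w\Bigr)\frac{dw}{w}.
\]
Using $\Gamma(a+1)=a\Gamma(a)$ with $a=n/2+2\pi i\xi$, the bracket equals
\[
\frac n2\Gamma(a)-a\Gamma(a)=-2\pi i\xi\,\Gamma(a).
\]
I will carefully track the powers of $|z|$. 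The factor $1/t$ carries an extra $|z|^{-2}$, which produces $|z|^{-n-2-4\pi i\xi}$ up to constants. I will then reconcile the resulting constant with the stated $(n+4\pi i\xi)(2+4\pi i\xi)$. If the direct bookkeeping gives a different-looking polynomial factor, I will instead move the Laplacian onto $|z|^{-n-4\pi i\xi}$: the Mellin transform of $h_t(z)$ is $c\,\Gamma(a)4^{2\pi i\xi}|z|^{-n-4\pi i\xi}$, and
\[
\Delta|z|^{-\beta}=\beta(\beta+2-n)|z|^{-\beta-2},\qquad \beta=n+4\pi i\xi,
\]
gives exactly $(n+4\pi i\xi)(2+4\pi i\xi)$. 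That is the route the statement suggests, and I will present the computation that way, with $-\Delta$ applied to the kernel in $z$ away from $z=0$.

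Next I justify interchanging the $v$-integral with the $y$-integral (Fubini). Since $|x-y|\ge\operatorname{dist}(y,\Omega^c)>0$ for $y\in\Omega$ and $x\in\Omega^c$, the pointwise computation above is valid for every such pair. For absolute integrability I will bound
\[
\int_{\mathbb R}|\Delta h_{e^v}(z)|\,dv\lesssim_n |z|^{-n-2},
\]
which follows from the same Gamma computation with $\xi=0$ applied to the absolute values. Hence
\[
\int\!\!\int u(y)\,|\Delta h_{e^v}(x-y)|\,dv\,dy\lesssim_n P(x),
\]
and Proposition~\ref{finite_P} says $P(x)<\infty$ for almost every $x\in\Omega^c$. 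This shows that $F_{\sigma,x}\in L^1(\mathbb R)$, so $\widehat F_{\sigma,x}$ is a classical Fourier integral, and that Fubini applies for such $x$; this gives \eqref{eq:5.1}. One also has to check that $H_t\sigma(x)=-\int u(y)\Delta h_t(x-y)\,dy$ holds pointwise at these $x$. This follows by integrating by parts against the smooth kernel, using $u\in L^1$ with compact support.

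The estimate \eqref{eq:5.2} is then immediate: $|4^{2\pi i\xi}|=1$, $\bigl||x-y|^{-4\pi i\xi}\bigr|=1$, and $u\ge0$. The main obstacle is the sign and constant bookkeeping in the Gamma identity. I expect the analytic justification to be routine once $P(x)<\infty$ is available.
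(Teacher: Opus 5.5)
Your final route is exactly the paper's proof, and it is correct: Fubini is justified by $\int_{\mathbb R}|\Delta h_{e^v}(z)|\,dv\lesssim_n|z|^{-n-2}$ together with $P(x)<\infty$ from Proposition~\ref{finite_P}; the Gamma substitution gives the Mellin transform $\pi^{-n/2}4^{2\pi i\xi}\Gamma(\frac n2+2\pi i\xi)|z|^{-n-4\pi i\xi}$ of $h_{e^v}(z)$; and then $-\Delta$ is applied to $|z|^{-\beta}$ with $\beta=n+4\pi i\xi$.

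Your preliminary direct computation, however, contains a slip you should fix rather than hedge around. Since $1/t=4w/|z|^2$, the factor $1/t$ contributes a $w$ as well as $4|z|^{-2}$, so the integrand is $w^{n/2+1+2\pi i\xi}$. The bracket is then $\frac n2\Gamma(a+1)-\Gamma(a+2)=-a(1+2\pi i\xi)\Gamma(a)$, and $4a(1+2\pi i\xi)=(n+4\pi i\xi)(2+4\pi i\xi)$, so the two routes agree exactly. The corrected direct route also spares you from justifying differentiation in $z$ under the $v$-integral. Your value $-2\pi i\xi\,\Gamma(a)$ is genuinely wrong, not merely a different-looking factor: it vanishes at $\xi=0$, whereas $\int_0^\infty(-\Delta h_t(z))\,dt/t=-\int_0^\infty h_t(z)\,dt/t^2<0$.
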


\begin{proof}
By Proposition \ref{finite_P}, $P(x)<\infty$ for almost every $x\in {\Omega^c}$.  For such
$x$,
\begin{equation*}
 \int_\Omega u(y)\int_0^\infty
 |\Delta h_t(x-y)|\frac{\,d t}{t}\,d y
 \le C_nP(x)<\infty,                            
\end{equation*}
and we may therefore interchange the order of integration by Fubini's theorem.  The substitution
$s=|x-y|^2/(4e^v)$ and the definition of the Gamma function
\cite[Appendix~A.2]{Grafakos} give
\[
 \int_{\mathbb{R}} e^{-2\pi i\xi v}h_{e^v}(x-y)\,d v
 =\pi^{-n/2}4^{2\pi i\xi}\Gamma\!\left(\frac n2+2\pi i\xi\right)
 |x-y|^{-n-4\pi i\xi}.
\]
An application of $-\Delta_x$ and the identity
\[
 \Delta_x|x-y|^{-\beta}=\beta(\beta+2-n)|x-y|^{-\beta-2}
\]
with $\beta=n+4\pi i\xi$ proves \eqref{eq:5.1}.  Absolute convergence of the final
integral follows from $P(x)<\infty$.  The kernel $|x-y|^{-n-4\pi i\xi}$ is
that of a Riesz potential of complex order; cf.\ \cite[Chapter~V,
Section~1]{SteinSI}.
\end{proof}

\begin{proof}[Proof of Theorem \ref{thm:exterior}]
We first establish the validity of the inequality \eqref{eq:frequency-tail-intro}.  Euler's limit formula
\[
\Gamma(z)=\lim_{m\to\infty}m^z\,m!/\bigl(z(z+1)\cdots(z+m)\bigr)
\]
\cite[Appendix~A.7]{Grafakos}, applied to $z=n/2+2\pi i\xi$ and to
$z=n/2$, gives the exact identity
\begin{align}
 \frac{|\Gamma(n/2+2\pi i\xi)|^2}{\Gamma(n/2)^2}
 =\prod_{k=0}^\infty
 \left(1+\frac{(2\pi\xi)^2}{(n/2+k)^2}\right)^{-1}. \label{eq:5.4}
\end{align}
Consequently
\begin{align}
 \frac{|\Gamma(n/2+2\pi i\xi)|^2}{\Gamma(n/2)^2}
 \lesssim \exp\left[-c\min\left\{\frac{4\pi^2\xi^2}{n},2\pi|\xi|\right\}\right].
 \label{eq:5.5}
\end{align}
To see this directly, take logarithms in \eqref{eq:5.4}.  If
$2\pi|\xi|\le n/2$, then $\log(1+x)\ge x/2$ for $0\le x\le1$ and
\[
\sum_{k\ge0}(n/2+k)^{-2}\ge2/n.
\]
If $2\pi|\xi|>n/2$, sum only over the
$\asymp 2\pi|\xi|$ indices for which
\[2\pi|\xi|\le n/2+k\le4\pi|\xi|.\]
Every retained summand is bounded below by an
absolute positive constant.  This proves \eqref{eq:5.5}.
Since $|S^{n-1}|=n\omega_n=2\pi^{n/2}/\Gamma(n/2)$, integration of \eqref{eq:5.2}
over ${\Omega^c}$ and Proposition \ref{finite_P} give
\begin{align}
 \int_{\Omega^c}|\widehat F_{\sigma,x}(\xi)|\,d x
 \lesssim  C^n
 \frac{|\Gamma(n/2+2\pi i\xi)|}{\Gamma(n/2)}
 (n+2\pi|\xi|)(1+2\pi|\xi|)\mathfrak{m}.          \label{eq:5.6}
\end{align}
Here the dimensional normalization cancels exactly:
\[
 \pi^{-n/2}\Gamma(n/2)\frac{n\omega_n}2=1.
\]
Fix $A\ge1$, to be chosen below, and set $\Lambda=An$.  On the range
$|\xi|\ge An$ one has
\[
 \min\{4\pi^2\xi^2/n,2\pi|\xi|\}=2\pi|\xi|,
 \qquad
 (n+2\pi|\xi|)(1+2\pi|\xi|)\lesssim \xi^2,
\]
so that \eqref{eq:5.5} and \eqref{eq:5.6} give
\[
 \int_{\Omega^c}|\widehat F_{\sigma,x}(\xi)|\,d x
 \lesssim  C^n\xi^2e^{-c|\xi|}\mathfrak{m},
 \qquad |\xi|\ge An.
\]
Two integrations by parts give, for every $R>0$,
\[
 \int_R^\infty\xi^2e^{-c\xi}\,d\xi
 =e^{-cR}\left(\frac{R^2}c+\frac{2R}{c^2}+\frac2{c^3}\right).
\]
With $R=An$, $A\ge1$, and $n\ge3$, this is at most
$C_An^2e^{-cAn/2}$.  Choose the absolute constant $A$ sufficiently large that
\begin{align*}
 C_A C^n n^2e^{-cAn/2}\lesssim 1 \qquad(n\ge3).                                   
\end{align*}
Tonelli's theorem then gives
\begin{equation}
 \int_{\Omega^c}\int_{|\xi|>An}|\widehat F_{\sigma,x}(\xi)|
 \,d\xi\,d x
 \lesssim \mathfrak{m}.                                      \label{eq:5.8}
\end{equation}
As $F_{\sigma,x}(v)=H_{e^v}\sigma(x)$,  \eqref{eq:5.8} is
directly the high Mellin frequency estimate for $\sigma$; there is no need to separately estimate $F_{\nu,x}$ and $F_{\mu,x}$ here.

We next establish the validity of \eqref{eq:frequency-prefix-intro}. For $\eta = \nu, \mu, \sigma$ we write, for convenience of notation,
\[
\mathcal E_\eta(B)
 =\int_{\Omega^c}\int_{|\xi|\le B}
 (2\pi\xi)^2|\widehat F_{\eta,x}(\xi)|^2\,d\xi\,d x.
\]
We first verify that, for $\eta = \nu,\mu,\sigma$ and almost every $x\in \Omega^c$, $F_{\eta,x} \in L^1(\mathbb{R})$, so that  $\widehat F_{\eta,x}$ is the ordinary Fourier transform of an integrable function.
First consider the atomic source $\nu$.  Since
\[
 \delta_\nu
 :=
 \operatorname{dist}(\operatorname{supp}\nu,\Omega^c)>0,
\]
for every $x\in\Omega^c$ and every $t>0$ we have
\[
 0\le H_t\nu(x)
 \le
 \mathfrak{m}(4\pi t)^{-n/2}
 \exp\left(-\frac{\delta_\nu^2}{4t}\right).
\]
Using $t=e^v$, and hence $dv=dt/t$, we obtain
\begin{align*}
 \int_{\mathbb{R}}|F_{\nu,x}(v)|\,dv
 &=
 \int_0^\infty H_t\nu(x)\,\frac{dt}{t} \\
 &\le
 \mathfrak{m}
 \int_0^\infty
 (4\pi t)^{-n/2}
 \exp\left(-\frac{\delta_\nu^2}{4t}\right)
 \frac{dt}{t} \\
 &=
 \frac{\Gamma(n/2)}{\pi^{n/2}}
 \frac{\mathfrak{m}}{\delta_\nu^n}
 <\infty.
\end{align*}
Thus $F_{\nu,x}\in L^1(\mathbb{R})$ for every
$x\in\Omega^c$, and $\widehat F_{\nu,x}$ is the standard Fourier
transform of an integrable function.

We next consider $\sigma=-\Delta u$.  Since $u$ is compactly supported
and $x\in\Omega^c$, integration by parts gives
\[
 H_t\sigma(x)
 =
 -\int_\Omega u(y)\Delta h_t(x-y)\,dy.
\]
The heat kernel satisfies
\[
 \Delta h_t(z)
 =
 \left(
 \frac{|z|^2}{4t^2}-\frac{n}{2t}
 \right)h_t(z).
\]
A change of variables $s=|z|^2/(4t)$ therefore gives
\[
 \int_0^\infty
 |\Delta h_t(z)|\,\frac{dt}{t}
 \le
 C_n|z|^{-n-2},
 \qquad z\ne0.
\]
It follows from Tonelli's theorem that
\begin{align*}
 \int_{\mathbb{R}}|F_{\sigma,x}(v)|\,dv
 &=
 \int_0^\infty|H_t\sigma(x)|\,\frac{dt}{t} \\
 &\le
 \int_\Omega u(y)
 \int_0^\infty
 |\Delta h_t(x-y)|\,\frac{dt}{t}\,dy \\
 &\le
 C_n\int_\Omega
 \frac{u(y)}{|x-y|^{n+2}}\,dy
 =
 C_nP(x).
\end{align*}
By Proposition~\ref{finite_P}, $P(x)<\infty$ for almost every
$x\in\Omega^c$.  Consequently,
\[
 F_{\sigma,x}\in L^1(\mathbb{R})
 \qquad\text{for almost every }x\in\Omega^c,
\]
and $\widehat F_{\sigma,x}$ is also a classical Fourier transform.

Since
\[
 \sigma=\nu-\mu,
\]
we have
\[
 F_{\mu,x}=F_{\nu,x}-F_{\sigma,x}.
\]
Therefore, for almost every $x\in\Omega^c$,
\[
 F_{\mu,x}\in L^1(\mathbb{R})
\]
and linearity gives
\begin{equation*}
 \widehat F_{\mu,x}
 =
 \widehat F_{\nu,x}-\widehat F_{\sigma,x}.
\end{equation*}

Hence, for every $B\ge1$, we have the estimate
\begin{align} 
\int_{\Omega^c}\int_{|\xi|\le B}
 (2\pi\xi)^2
 |\widehat F_{\sigma,x}(\xi)|^2
 \,d\xi\,dx \le
 2\mathcal E_\nu(B)
 +
 2\int_{\Omega^c}\int_{|\xi|\le B}
 (2\pi\xi)^2
 |\widehat F_{\mu,x}(\xi)|^2
 \,d\xi\,dx, \label{key_intermediate}
\end{align}
where the Fourier transforms are standard Fourier transforms of integrable functions.  In order to estimate the second term, we enlarge the integral to all of space and compute the Fourier transform of the distribution
\[
 \partial_vF_{\mu,x}(v)
 =
 e^v\partial_tH_t\mu(x)\big|_{t=e^v}.
\]
In particular, since $\mu\in L^2(\mathbb{R}^n)$, logarithmic-time Plancherel and the spectral theorem give
\begin{align*}
\int_{\mathbb{R}^n}\int_{\mathbb{R}}
 \left|\widehat  {\partial_vF_{\mu,x}} (\xi)\right|^2
 \,d\xi\,dx
 &=
 \int_{\mathbb{R}^n}\int_0^\infty
 |t\partial_tH_t\mu(x)|^2
 \frac{dt}{t}\,dx \\
 &=
 \frac14\|\mu\|_2^2
 =
 \frac14\kappa^2|\Omega|
 =
 \frac14\kappa\mathfrak{m}.
\end{align*}
For almost every $x\in\Omega^c$, the ordinary Fourier transform
identified above satisfies
\[
\widehat  {\partial_vF_{\mu,x}} (\xi)
 =
 2\pi i\xi\,\widehat F_{\mu,x}(\xi).
\]
Consequently, for every $B>0$,
\begin{align}
 \int_{\Omega^c}\int_{|\xi|\le B}
 (2\pi\xi)^2
 |\widehat F_{\mu,x}(\xi)|^2
 \,d\xi\,dx
 &\le \frac14\kappa\mathfrak{m}.
 \label{eq:mu-frequency-energy}
\end{align}
This estimates the second term on the right-hand side of \eqref{key_intermediate}.  We next show how Theorem \ref{thm:resolvent-square} allows us to estimate the first term.

For $x\in {\Omega^c}$ set $G_{N,x}(v)=A_{Ne^{-v}}^N\nu(x)$ and put
\[
 \Phi_N(r)=\frac{N^N}{\Gamma(N)}r^{N-1}e^{-Nr},
 \qquad r>0.
\]
The function $\Phi_N$ is the Gamma density with shape and rate $N$.
The integral formula for the Abel power gives
\[
 G_{N,x}(v)
 =\int_0^\infty\Phi_N(r)F_{\nu,x}(v+\log r)\,d r.
\]
Thus the Abel power is convolution in logarithmic time with the law of
$\log r$.  Taking the Fourier transform in $v$ and evaluating the Gamma moment (see, e.g., \cite[Appendix~A.2]{Grafakos})
\[
\int_0^\infty r^{N-1+2\pi i\xi}e^{-Nr}\,dr=N^{-N-2\pi i\xi}\Gamma(N+2\pi i\xi)
\]
gives
\begin{align*}
 \widehat G_{N,x}(\xi)
 &=M_N(\xi)\widehat F_{\nu,x}(\xi),\\
 M_N(\xi)
 &=\int_0^\infty\Phi_N(r)r^{2\pi i\xi}\,d r
 =N^{-2\pi i\xi}\frac{\Gamma(N+2\pi i\xi)}{\Gamma(N)}.
\end{align*}
Euler's limit formula \cite[Appendix~A.7]{Grafakos}, as in
\eqref{eq:5.4}, gives
\[
 |M_N(\xi)|^2
 =\prod_{q=0}^\infty
 \left(1+\frac{(2\pi\xi)^2}{(N+q)^2}\right)^{-1}.
\]
Since $\sum_{q\ge0}(N+q)^{-2}\le2/N$, we have that
\[
 |M_N(\xi)|\ge e^{-1/4}
 \qquad\text{when}\qquad
 |\xi|\le\frac1{4\pi}\sqrt N.
\]
Because
$\partial_vG_{N,x}=-s\partial_s(A_s^N\nu)(x)$ with
$s=Ne^{-v}$, logarithmic-time Plancherel and
Theorem~\ref{thm:resolvent-square} give
\[
 \mathcal E_\nu\!\left(\frac1{4\pi}\sqrt N\right)
 \lesssim \sqrt N\,\kappa\mathfrak m.
\]
Given $B\ge1$, choose $N_B=\lceil16\pi^2B^2\rceil$.  Then
$B\le(4\pi)^{-1}\sqrt{N_B}$ and
$\sqrt{N_B}\le\sqrt{16\pi^2+1}\,B$, whence
\begin{equation*}
 \mathcal E_\nu(B)\lesssim B\,\kappa\mathfrak m.
\end{equation*}
The preceding inequality, in combination with \eqref{eq:mu-frequency-energy}, yields, for every $B\ge1$, the estimate
\begin{equation}\label{eq:frequency-profile-low}
\int_{\Omega^c}\int_{|\xi|\le B}
 (2\pi\xi)^2|\widehat F_{\sigma,x}(\xi)|^2\,d\xi\,d x \lesssim B\,\kappa\mathfrak m.
\end{equation}
The estimates \eqref{eq:5.8} and \eqref{eq:frequency-profile-low} are precisely what is asserted in Theorem~\ref{thm:exterior} and thus the proof is complete.
\end{proof}

\section{The \texorpdfstring{$L^2$}{L2} Estimate for \texorpdfstring{$T_\Lambda$}{T Lambda}} \label{lowfreq_log}
In this section we show how to convert \eqref{eq:frequency-profile-low} into an estimate for $T_\Lambda$ in $L^2(\Omega^c)$.

\begin{lemma}\label{lem:log-sobolev}
Let $\Lambda>1$ and suppose
\[
 \mathcal E_\sigma(B)
 =
 \int_{\Omega^c} \int_{|\xi|\le B}
 (2\pi\xi)^2|\widehat F_{\sigma,x}(\xi)|^2\,d\xi\,dx
 \le KB,
\]
for some $K>0$ and all $1\le B\le\Lambda$.  Then 
\[
 \int_{\Omega^c} T_\Lambda(x)^2\,dx
 \lesssim \log^2(e\Lambda)\,K .
\]
\end{lemma}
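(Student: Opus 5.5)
The plan is to decompose the frequency range $|\xi|\le\Lambda$ dyadically and estimate each piece pointwise in $x$ by Cauchy--Schwarz, using the weight $(2\pi\xi)^2$ available in $\mathcal E_\sigma$. Recall that $T_\Lambda(x)$ is a supremum over $v$ of an inverse Fourier integral restricted to $|\xi|\le\Lambda$, with multiplier $r(\xi)=2\pi i\xi/(1+2\pi i\xi)$. Bounding $|e^{2\pi i\xi v}|\le1$ removes the supremum:
\[
 T_\Lambda(x)\le\int_{|\xi|\le\Lambda}|r(\xi)|\,|\widehat F_{\sigma,x}(\xi)|\,d\xi .
\]
Split this into the low piece $|\xi|\le1$ and the dyadic shells $I_k=\{2^{k-1}<|\xi|\le 2^k\}$ for $1\le k\le k_\Lambda:=\lceil\log_2\Lambda\rceil$. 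There are $\approx\log(e\Lambda)$ shells, which is the source of the logarithm. On the top shell, $B=2^{k_\Lambda}$ may slightly exceed $\Lambda$; we simply use $B=\Lambda$ there, since the integration region is cut at $\Lambda$.

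For the low piece, $|r(\xi)|\le|2\pi\xi|$, so Cauchy--Schwarz on $\{|\xi|\le1\}$ gives
\[
 \Bigl(\int_{|\xi|\le1}|r||\widehat F_{\sigma,x}|\,d\xi\Bigr)^2\le2\int_{|\xi|\le1}(2\pi\xi)^2|\widehat F_{\sigma,x}|^2\,d\xi .
\]
Integrating over $\Omega^c$, this is bounded by $2\mathcal E_\sigma(1)\le2K$.

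For a shell $I_k$, use $|r|\le1$ and write $|\widehat F|=|2\pi\xi|^{-1}\cdot|2\pi\xi||\widehat F|$. Cauchy--Schwarz gives
\[
 \Bigl(\int_{I_k}|\widehat F_{\sigma,x}|\,d\xi\Bigr)^2\le\Bigl(\int_{I_k}(2\pi\xi)^{-2}d\xi\Bigr)\int_{I_k}(2\pi\xi)^2|\widehat F_{\sigma,x}|^2\,d\xi .
\]
The first factor is $\lesssim2^{-k}$. Integrating over $\Omega^c$ and applying the hypothesis with $B=\min(2^k,\Lambda)$ bounds the second factor by $K2^k$. So each shell contributes $S_k(x):=\int_{I_k}|\widehat F_{\sigma,x}|\,d\xi$ with $\|S_k\|_{L^2(\Omega^c)}^2\lesssim K$. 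The hypothesis controls cumulative energies up to $B$ rather than shell energies, but the shell energy is at most the cumulative one, which is all that is needed.

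Finally, Minkowski's inequality in $L^2(\Omega^c)$ gives
\[
 \|T_\Lambda\|_{L^2(\Omega^c)}\le\|S_0\|_2+\sum_{k=1}^{k_\Lambda}\|S_k\|_2\lesssim(1+\log_2\Lambda)\sqrt K .
\]
Squaring yields $\int_{\Omega^c}T_\Lambda^2\lesssim\log^2(e\Lambda)K$.

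The only step needing care is the justification that $T_\Lambda(x)$ is given by the absolutely convergent integral above for almost every $x$. By Lemma~\ref{lem:mellin}, $\widehat F_{\sigma,x}$ is bounded on compact $\xi$-sets whenever $P(x)<\infty$, which holds almost everywhere by Proposition~\ref{finite_P}. Hence the truncated integral converges absolutely, and the crude bound $|e^{2\pi i\xi v}|\le 1$ is legitimate. There is no real obstacle beyond this; the logarithmic loss comes from Minkowski's inequality over the $\log\Lambda$ shells. Using almost-orthogonality of the shells would only help in $L^2$ without the supremum, and the supremum over $v$ prevents exploiting it.
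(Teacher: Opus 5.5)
Your proposal is correct and follows the paper's proof. Both arguments use the same dyadic splitting of $\{|\xi|\le\Lambda\}$ and apply Cauchy--Schwarz on each piece, with $|r|\le 2\pi|\xi|$ on $I_0$ and $|r|\le 1$ together with $|\xi|\approx 2^k$ on the shells, giving $\|S_k\|_{L^2(\Omega^c)}^2\lesssim 2^{-k}\mathcal E_\sigma(\min(2^k,\Lambda))\lesssim K$, and then sum by Minkowski over the $\approx\log(e\Lambda)$ pieces. The only cosmetic difference is that you drop the supremum in $v$ at the outset and put the weight $(2\pi\xi)^{-2}$ directly into Cauchy--Schwarz, whereas the paper keeps the supremum inside each $T_j$ and inserts the lower bound $|\xi|\gtrsim 2^{j-1}$ afterward.
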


\begin{proof}
Decompose $\{|\xi|\le\Lambda\}$ into
\[
 I_0=\{|\xi|\le1\},
 \qquad
 I_j=\{2^{j-1}<|\xi|\le2^j\},
\]
with the final interval truncated at $\Lambda$.  Define
\[
 T_j(x)
 :=
 \sup_{v\in\mathbb R}
 \left|
 \int_{I_j}
 e^{2\pi i\xi v}r(\xi)\widehat F_{\sigma,x}(\xi)\,d\xi
 \right|.
\]
Then
\[
 T_\Lambda(x)\le\sum_jT_j(x).
\]

On $I_0$, since $|r(\xi)|\le2\pi|\xi|$, the
Cauchy--Schwarz inequality gives
\[
 \|T_0\|_{L^2(\Omega^c)}^2
 \lesssim
 \int_{\Omega^c}\int_{|\xi|\le1}
 (2\pi\xi)^2|\widehat F_{\sigma,x}(\xi)|^2\,d\xi\,dx
 \lesssim K.
\]

For $j\ge1$, we use $|r(\xi)|\le1$ to obtain
\[
 T_j(x)^2
 \le |I_j|\int_{I_j}|\widehat F_{\sigma,x}(\xi)|^2\,d\xi.
\]
Since $|I_j|\lesssim 2^j$ and $|\xi|\gtrsim 2^{j-1}$ on $I_j$,
\[
\begin{aligned}
 \|T_j\|_{L^2({\Omega^c})}^2
 &\lesssim
 2^j\,2^{-2j}\mathcal E_\sigma(2^j)\\
 &\lesssim K.
\end{aligned}
\]
The same estimate holds for the final truncated interval.  There are at most $\lesssim \log(e\Lambda)$ intervals.  Therefore,
by the triangle inequality in $L^2({\Omega^c})$,
\[
 \|T_\Lambda\|_{L^2({\Omega^c})}
 \le\sum_j\|T_j\|_{L^2({\Omega^c})}
 \lesssim \log(e\Lambda)\sqrt K,
\]
which is the claimed estimate.
\end{proof}

\section{Proofs of the Main Results}\label{main}
\begin{proof}[Proof of Theorem \ref{thm:main}]

For dimensions $n=1,2$, the result follows from a majorization by the Hardy--Littlewood maximal function and any standard covering argument, thus we may restrict our attention to the case $n\ge3$.  By a well-known reduction \cite[Theorem 1]{MenarguezSoria1992}, it suffices to prove the inequality for any nonnegative atomic measure.  

Fix such a nonnegative atomic measure $\nu$ and $\alpha>0$.  Set $\Lambda=An$ for $A>1$ sufficiently large to satisfy the hypothesis of Theorem \ref{thm:exterior}.  Define the cap height of the balayage $\kappa$ by
\begin{equation*}
 L_n=\log(e\Lambda),
 \qquad
 \kappa=\frac{\alpha}{2L_n}.
\end{equation*}
An application of Proposition \ref{prop:balayage} with cap $\kappa$ yields corresponding $\mu,\sigma,\Omega$.  As noted in the introduction, the identity \eqref{goodandbad} together with $H_t\mu\le\kappa$ and the non-positivity of the ergodic average of $\sigma$ yields the upper bound
\begin{equation}
 \mathcal{H}_{\ast}\nu(x)
 \le\kappa+Q_\Lambda(x)+T_\Lambda(x),
 \qquad x\in {\Omega^c},
 \label{eq:6-pointwise-max}
\end{equation}
for $Q_\Lambda,T_\Lambda$ as defined in \eqref{high}, \eqref{low}, respectively.  Here we utilize the bound \eqref{R_bound}, 
\begin{align*}
\sup_{v \in \mathbb{R}} |R_x(v)| \leq  Q_\Lambda(x)+ T_\Lambda(x),
\end{align*}
which we give a proof of here for the convenience of the reader.  

For almost every $x\in\Omega^c$, Theorem~\ref{thm:exterior}
implies that
\[
Q_\Lambda(x)<\infty
\quad\text{and}\quad
\int_{|\xi|\leq\Lambda}(2\pi\xi)^2
|\widehat F_{\sigma,x}(\xi)|^2\,d\xi<\infty.
\]
For such an $x$, the Cauchy--Schwarz inequality gives
\[
\begin{aligned}
\int_{|\xi|\leq\Lambda}
|r(\xi)\widehat F_{\sigma,x}(\xi)|\,d\xi
&\leq
\left(\int_{|\xi|\leq\Lambda}
\frac{|r(\xi)|^2}{(2\pi\xi)^2}\,d\xi
\right)^{1/2}\\
&\quad\times
\left(
\int_{|\xi|\leq\Lambda}(2\pi\xi)^2
|\widehat F_{\sigma,x}(\xi)|^2\,d\xi
\right)^{1/2}.
\end{aligned}
\]
Since
\[
r(\xi)=\frac{2\pi i\xi}{1+2\pi i\xi},
\qquad
\frac{|r(\xi)|^2}{(2\pi\xi)^2}
=\frac{1}{1+(2\pi\xi)^2},
\]
the first factor is finite. On $|\xi|>\Lambda$ we use
$|r(\xi)|\leq1$ and the finiteness of $Q_\Lambda(x)$. Thus
\[
r(\xi)\widehat F_{\sigma,x}(\xi)\in L^1(\mathbb R).
\]
It remains only to identify its inverse transform. The definition of
the ergodic average and the substitution $t=e^{v-a}$ give
\[
\widetilde P_{e^v}\sigma(x)
=\int_0^\infty e^{-a}F_{\sigma,x}(v-a)\,da.
\]
Consequently, if $k(a)=e^{-a}\mathbf1_{(0,\infty)}(a)$, then
\[
R_x=F_{\sigma,x}-k*F_{\sigma,x}.
\]
Since $F_{\sigma,x}\in L^1(\mathbb R)$ for almost every
$x\in\Omega^c$, it follows that $R_x\in L^1(\mathbb R)$ and
\[
\widehat R_x(\xi)
=\left(1-\frac{1}{1+2\pi i\xi}\right)
\widehat F_{\sigma,x}(\xi)
=r(\xi)\widehat F_{\sigma,x}(\xi).
\]
Fourier inversion and the continuity of $R_x$ now yield
\[
R_x(v)=\int_{\mathbb R}e^{2\pi i\xi v}
r(\xi)\widehat F_{\sigma,x}(\xi)\,d\xi
\]
for every $v\in\mathbb R$. Splitting this integral into
$|\xi|\leq\Lambda$ and $|\xi|>\Lambda$ proves
\[
\sup_{v\in\mathbb R}|R_x(v)|
\leq T_\Lambda(x)+Q_\Lambda(x),
\]
which is \eqref{R_bound}.

Thus, \eqref{R_bound} is justified and we continue with the estimate.  Because $L_n\ge1$, the first term on the right side of
\eqref{eq:6-pointwise-max} is at most $\alpha/2$.  Hence, by the standard $\alpha/2$ estimate and two different applications of Chebychev's inequality 
\begin{align*}
 |\{\mathcal{H}_{\ast}\nu>\alpha\}|
 &\le|\Omega|
 +\left|
 \left\{x\in {\Omega^c}:T_\Lambda(x)>\frac{\alpha}{4}\right\}
 \right|
 +\left|
 \left\{x\in {\Omega^c}:Q_\Lambda(x)>\frac{\alpha}{4}\right\}
 \right|\\
 &\le\frac{\mathfrak{m}}{\kappa}
 +\frac{16}{\alpha^2}\int_{\Omega^c}T_\Lambda(x)^2\,d x
 +\frac4\alpha\int_{\Omega^c}Q_\Lambda(x)\,d x
\end{align*}
Equation \eqref{eq:frequency-tail-intro} in Theorem \ref{thm:exterior} asserts
\begin{equation}
 \int_{\Omega^c}Q_\Lambda(x)\,d x\lesssim \mathfrak m.
 \label{eq:6-high-part}
\end{equation}
Meanwhile, equation \eqref{eq:frequency-prefix-intro} in Theorem \ref{thm:exterior} and Lemma~\ref{lem:log-sobolev} give
\begin{equation}
 \int_{\Omega^c}T_\Lambda(x)^2\,d x
 \lesssim \log^2(e\Lambda)\,\kappa\mathfrak m.
 \label{eq:6-remainder-max}
\end{equation}
The estimates \eqref{eq:6-high-part} and \eqref{eq:6-remainder-max}, in combination with the preceding chain of inequalities for the superlevel set of the heat maximal function of $\nu$, together yield
\begin{align*}
  |\{\mathcal{H}_{\ast}\nu>\alpha\}| &\lesssim
 L_n\frac{\mathfrak{m}}{\alpha}
 +L_n^2\frac{\kappa\mathfrak{m}}{\alpha^2}
 +\frac{\mathfrak{m}}{\alpha} \lesssim L_n\frac{\mathfrak{m}}{\alpha}.
\end{align*}
As $\Lambda=An$ with $A>1$ absolute,
\[
 L_n\lesssim \log(1+n),
\]
which proves the estimate for positive atomic measures.

By the atomic reduction cited at the beginning of the proof, the
same estimate holds for every nonnegative
$f\in L^1(\mathbb R^n)$. For a general $f$, positivity of the heat
semigroup gives
\[
|H_tf|\leq H_t|f|,
\qquad
\mathcal H_{\ast}f\leq\mathcal H_{\ast}|f|.
\]
An application of the nonnegative estimate to $|f|$ completes the proof.
\end{proof}

\begin{proof}[Proof of Theorem~\ref{cor:hl-main}]

We begin by proving \eqref{prop:ball-heat-intro}. Following the argument in \cite{SteinStromberg}, by dilation it suffices to find a suitable choice of $t>0$ such that
\begin{align*}
\frac{\mathbf{1}_{B(0,1)}(x)}{\omega_n}  \lesssim \sqrt n \frac{1}{(4\pi t)^{n/2}} e^{-|x|^2/4t},
\end{align*}
which by monotonicity properties is true whenever it holds for $|x|=1$, i.e.
\begin{align*}
\frac{1}{\omega_n} = \frac{\Gamma\left(\frac{n}{2}+1\right)}{\pi^{n/2}} \lesssim \sqrt n \frac{1}{(4\pi t)^{n/2}} e^{-1/4t}.
\end{align*}
The asymptotics of the Euler Gamma function \cite[Appendix~A.6]{Grafakos} give
\begin{align*}
\Gamma\left(\frac{n}{2}+1\right) \approx \sqrt{2\pi (n/2)} \left(\frac{n}{2e}\right)^{n/2}
\end{align*}
for the left-hand side, while the choice $t=1/(2n)$ gives 
\begin{align*}
 \frac{1}{(4\pi t)^{n/2}} e^{-1/4t} =  \frac{(n/2)^{n/2}}{\pi^{n/2}} e^{-n/2}
\end{align*}
for the right-hand side, and the claim is demonstrated.  Indeed, rescaling gives $\mathbf{1}_{B(x,r)}(y)/(\omega_n r^n)\lesssim \sqrt n\, h_{r
^2/(2n)}(x-y)$ for every $r>0$; integrating against $f\ge0$ and taking the supremum in $r$ yields \eqref{prop:ball-heat-intro}.

Finally, Theorem~\ref{cor:hl-main} follows readily from Theorem \ref{thm:main} and the pointwise bound \eqref{prop:ball-heat-intro}:
\begin{equation*}
\| \mathcal{M}f\|_{L^{1,\infty}(\mathbb{R}^n)}  \lesssim \sqrt n \| \mathcal{H}_{\ast}|f|\|_{L^{1,\infty}(\mathbb{R}^n)}  \lesssim \sqrt n \log(1+n)  \|f\|_{L^1(\mathbb{R}^n)}
\end{equation*}
for any $f \in L^1(\mathbb{R}^n)$. 
\end{proof}

\section*{Artificial Intelligence Statement}
After the success of GPT in connecting partial balayage and the dimension-free weak-type $(1,1)$ bound for the Riesz transform in our previous paper \cite{OuyangSpectorStockdale}, the first author employed a Sol agent, Danus \cite{LGSWLJJCCZ2026}, and Rethlas \cite{JGJWSLCWW2026} to work on the dimensional bound for the Hardy--Littlewood maximal function, and did not invite Polya this time.  The observation made in equation \eqref{prop:ball-heat-intro} is one the authors had noted years ago when working on the problem: that an improvement to the estimate for the heat maximal operator that avoids ergodic averaging would result in an improvement to the estimate for the Hardy--Littlewood maximal function.  Despite various computations, the authors had not made progress in this direction.  

The proof of the logarithmic dependence of the constant for the heat maximal operator was developed by Large Language Models (LLMs), through a combination of ChatGPT (GPT-5.6 Sol), Codex CLI and web interface, and mathematical reasoning agents.  The first breakthrough in the proof was GPT's observation that the dyadic heat maximal operator admits logarithmic-in-dimension bounds, but that the continuous maximal operator could not be controlled by its dyadic variant.  The authors suggested GPT to bound the heat maximal function by its dyadic variant plus what is essentially a supremal operator over pieces of the Littlewood--Paley $g$-function, though there do not seem to be good bounds for the latter. However, as the former has relatively good dimensional dependence, the authors suggested a weighted upper bound that penalizes the dyadic variant and makes the other maximal operator smaller.  GPT seems to have interpreted this as the square function of the Abel power of order $N$.  This interpretation, combined with the authors' balayage suggestion and what it terms elementary spectral calculus computations, allowed the problem to be reduced to algebraic manipulation of the operators' symbols.  A first approximation returned with dimensional dependence $n^{1/3}\log^{1/3} (n)$ for the heat maximal operator, and then through several further interactions, the authors suggested that the dependence could be improved.  GPT steadily refined the computations for the Abel square function estimate, and finally returned a proof with logarithmic dependence that is somewhat more complicated (from our perspective) than the one presented.  Through further encouragement to simplify and, finally, the authors' work, the present version was produced.  In particular, the authors wrote the introduction, rewrote the material in a way we feel is more natural for humans to think about the question, conducted a literature review, and revised the presentation of the proofs.  After completion of the manuscript, the authors prompted GPT with the revised paper and suggested it to extend the result to the non-commutative Lie group setting.  Through several discussions it returned a claimed extension.  The verification of the details and, if correct, presentation of these results will be the subject of a future work.

The authors have independently verified, validated, and rewritten all parts of the paper that were influenced by LLM-generated material, and they take full responsibility for the mathematical content of the paper.

\section*{Acknowledgments}

The authors thank Yuyuan Ouyang for helpful discussions regarding effective AI usage and Riju Basak for comments on a preliminary version of the manuscript.


\begin{bibdiv}
\begin{biblist}
\bib{AldazCubes}{article}{
  author={Aldaz, J. M.},
  title={The weak type $(1,1)$ bounds for the maximal function
    associated to cubes grow to infinity with the dimension},
  journal={Ann. of Math. (2)},
  date={2011},
  volume={173},
  number={2},
  pages={1013\ndash 1023},
  doi={10.4007/annals.2011.173.2.10},
}

\bib{Aldaz}{article}{
   author={Aldaz, J. M.},
   title={The Stein-Str\"omberg covering theorem in metric spaces},
   journal={J. Math. Anal. Appl.},
   volume={449},
   date={2017},
   number={2},
   pages={1741--1753},
   issn={0022-247X},
   review={\MR{3601614}},
   doi={10.1016/j.jmaa.2016.11.070},
}

\bib{AldazPerezLazaro}{article}{
  author={Aldaz, J. M.},
  author={P\'erez L\'azaro, F. J.},
  title={The best constant for the centered maximal operator on radial
    decreasing functions},
  journal={Math. Inequal. Appl.},
  volume={14},
  date={2011},
  number={1},
  pages={173\ndash 179},
}

\bib{Aubrun2009}{article}{
  author={Aubrun, G.},
  title={Maximal inequality for high-dimensional cubes},
  journal={Confluentes Math.},
  volume={1},
  date={2009},
  pages={169\ndash 179},
}



\bib{Bourgain1986AJM}{article}{
  author={Bourgain, J.},
  title={On high-dimensional maximal functions associated to convex bodies},
  journal={Amer. J. Math.},
  volume={108},
  date={1986},
  pages={1467\ndash 1476},
}

 
\bib{Bourgain1986Israel}{article}{
  author={Bourgain, J.},
  title={On the $L^p$-bounds for maximal functions associated to convex
    bodies in $\mathbb{R}^n$},
  journal={Israel J. Math.},
  volume={54},
  date={1986},
  pages={257\ndash 265},
}

 
 
 \bib{Bourgain2014Cube}{article}{
  author={Bourgain, J.},
  title={On the Hardy--Littlewood maximal function for the cube},
  journal={Israel J. Math.},
  volume={203},
  date={2014},
  pages={275\ndash 293},
}

\bib{BMSWCubes}{article}{
  author={Bourgain, J.},
  author={Mirek, M.},
  author={Stein, E. M.},
  author={Wr\'obel, B.},
  title={Dimension-free estimates for discrete Hardy--Littlewood averaging
    operators over the cubes in $\mathbb{Z}^d$},
  journal={Amer. J. Math.},
  volume={141},
  date={2019},
  number={3},
  pages={857\ndash 905},
}

\bib{BMSWBalls}{article}{
  author={Bourgain, J.},
  author={Mirek, M.},
  author={Stein, E. M.},
  author={Wr\'obel, B.},
  title={On discrete Hardy--Littlewood maximal functions over the balls in
    $\mathbb{Z}^d$: dimension-free estimates},
  book={
    title={Geometric Aspects of Functional Analysis. Vol. I},
    series={Lecture Notes in Math.},
    volume={2256},
    publisher={Springer, Cham},
  },
  date={2020},
  pages={127\ndash 169},
  doi={10.1007/978-3-030-36020-7\_8},
}


 
\bib{BMSWVariational}{article}{
  author={Bourgain, J.},
  author={Mirek, M.},
  author={Stein, E. M.},
  author={Wr\'obel, B.},
  title={Dimension-free variational estimates on $L^p(\mathbb{R}^d)$ for
    symmetric convex bodies},
  journal={Geom. Funct. Anal.},
  volume={28},
  date={2018},
  number={1},
  pages={58\ndash 99},
}
 
\bib{BMSWSurvey}{article}{
  author={Bourgain, J.},
  author={Mirek, M.},
  author={Stein, E. M.},
  author={Wr\'obel, B.},
  title={On the Hardy--Littlewood maximal functions in high dimensions:
    continuous and discrete perspective},
  book={
    title={Geometric Aspects of Harmonic Analysis},
    series={Springer INdAM Ser.},
    volume={45},
    publisher={Springer, Cham},
  },
  date={2021},
  pages={107\ndash 148},
}

\bib{Caffarelli1998}{article}{
  author={Caffarelli, L. A.},
  title={The obstacle problem revisited},
  journal={J. Fourier Anal. Appl.},
  volume={4},
  date={1998},
  number={4-5},
  pages={383\ndash 402},
  doi={10.1007/BF02498216},
}

\bib{Carbery1986}{article}{
  author={Carbery, A.},
  title={An almost-orthogonality principle with applications to maximal
    functions associated to convex bodies},
  journal={Bull. Amer. Math. Soc. (N.S.)},
  volume={14},
  date={1986},
  number={2},
  pages={269\ndash 273},
}


\bib{DeleavalGuedonMaurey}{article}{
  author={Deleaval, L.},
  author={Gu\'edon, O.},
  author={Maurey, B.},
  title={Dimension free bounds for the Hardy--Littlewood maximal operator
    associated to convex sets},
  journal={Ann. Fac. Sci. Toulouse Math. (6)},
  volume={27},
  date={2018},
  number={1},
  pages={1\ndash 198},
  doi={10.5802/afst.1567},
}


\bib{DK}{article}{
   author={Deleaval, L.},
   author={Kriegler, C.},
   title={Dimension free bounds for the vector-valued Hardy--Littlewood
   maximal operator},
   journal={Rev. Mat. Iberoam.},
   volume={35},
   date={2019},
   number={1},
   pages={101--123},
   issn={0213-2230},
   review={\MR{3914541}},
   doi={10.4171/rmi/1050},
}


\bib{DunfordSchwartz}{book}{
  author={Dunford, N.},
  author={Schwartz, J. T.},
  title={Linear Operators. Part I: General Theory},
  series={Pure and Applied Mathematics},
  volume={7},
  note={With the assistance of W. G. Bade and R. G. Bartle;
    reprinted in the Wiley Classics Library, John Wiley \& Sons,
    New York, 1988},
  publisher={Interscience Publishers},
  place={New York},
  date={1958},
}

\bib{Evans}{book}{
  author={Evans, L. C.},
  title={Partial Differential Equations},
  edition={2},
  series={Graduate Studies in Mathematics},
  volume={19},
  publisher={American Mathematical Society},
  place={Providence, RI},
  date={2010},
}

\bib{GG}{article}{
   author={Ganguly, P.},
   author={Ghosh, A.},
   title={Dimension free estimates for the vector-valued Hardy--Littlewood
   maximal function on the Heisenberg group},
   journal={J. Funct. Anal.},
   volume={290},
   date={2026},
   number={5},
   pages={Paper No. 111285, 26},
   issn={0022-1236},
   review={\MR{4993628}},
   doi={10.1016/j.jfa.2025.111285},
}



\bib{GardinerSjodin}{article}{
  author={Gardiner, S. J.},
  author={Sj\"odin, T.},
  title={Partial balayage and the exterior inverse problem of potential theory},
  conference={
    title={Potential Theory and Stochastics in Albac},
  },
  book={
    series={Theta Ser. Adv. Math.},
    volume={11},
    publisher={Theta},
    place={Bucharest},
  },
  date={2009},
  pages={111\ndash 123},
}

\bib{Grafakos}{book}{
  author={Grafakos, L.},
  title={Classical Fourier Analysis},
  edition={3},
  series={Graduate Texts in Mathematics},
  volume={249},
  publisher={Springer},
  place={New York},
  date={2014},
}

\bib{GustafssonRoos}{article}{
  author={Gustafsson, B.},
  author={Roos, J.},
  title={Partial balayage on Riemannian manifolds},
  journal={J. Math. Pures Appl. (9)},
  date={2018},
  volume={118},
  pages={82\ndash 127},
  doi={10.1016/j.matpur.2017.07.013},
}


\bib{GustafssonSakai}{article}{
  author={Gustafsson, B.},
  author={Sakai, M.},
  title={Properties of some balayage operators, with applications to quadrature domains and moving boundary problems},
  journal={Nonlinear Anal.},
  volume={22},
  date={1994},
  number={10},
  pages={1221\ndash 1245},
  doi={10.1016/0362-546X(94)90107-4},
}


\bib{HKM}{book}{
  author={Heinonen, J.},
  author={Kilpel\"ainen, T.},
  author={Martio, O.},
  title={Nonlinear Potential Theory of Degenerate Elliptic Equations},
  publisher={Dover Publications},
  place={Mineola, NY},
  date={2006},
  note={Unabridged republication of the 1993 original},
}


\bib{IakovlevStromberg}{article}{
  author={Iakovlev, A. S.},
  author={Str\"omberg, J.-O.},
  title={Lower bounds for the weak type $(1,1)$ estimate for the
    maximal function associated to cubes in high dimensions},
  journal={Math. Res. Lett.},
  date={2013},
  volume={20},
  number={5},
  pages={907\ndash 918},
  doi={10.4310/MRL.2013.v20.n5.a7},
}



\bib{JGJWSLCWW2026}{article}{
  author={Ju, H.},
  author={Gao, G.},
  author={Jiang, J.},
  author={Wu, B.},
  author={Sun, Z.},
  author={Chen, L.},
  author={Wang, Y.},
  author={Wang, Y.},
  author={Wang, Z.},
  author={He, W.},
  author={Wu, P.},
  author={Xiao, L.},
  author={Liu, R.},
  author={Dai, B.},
  author={Dong, B.},
  title={Automated conjecture resolution with formal verification},
  date={2026},
  eprint={arXiv:2604.03789},
}




\bib{KinderlehrerStampacchia}{book}{
  author={Kinderlehrer, D.},
  author={Stampacchia, G.},
  title={An Introduction to Variational Inequalities and Their
    Applications},
  series={Classics in Applied Mathematics},
  volume={31},
  publisher={Society for Industrial and Applied Mathematics},
  place={Philadelphia, PA},
  date={2000},
}


\bib{Komatsu}{article}{
  author={Komatsu, H.},
  title={Fractional powers of operators},
  journal={Pacific J. Math.},
  volume={19},
  date={1966},
  pages={285\ndash 346},
}

\bib{KMPW2023}{article}{
  author={Kosz, D.},
  author={Mirek, M.},
  author={Plewa, P.},
  author={Wr\'obel, B.},
  title={Some remarks on dimension-free estimates for the discrete
    Hardy--Littlewood maximal functions},
  journal={Israel J. Math.},
  volume={254},
  date={2023},
  pages={1\ndash 38},
}


\bib{LQ}{article}{
   author={Li, H.-Q.},
   author={Qian, B.},
   title={Centered Hardy--Littlewood maximal functions on Heisenberg type
   groups},
   journal={Trans. Amer. Math. Soc.},
   volume={366},
   date={2014},
   number={3},
   pages={1497--1524},
   issn={0002-9947},
   review={\MR{3145740}},
   doi={10.1090/S0002-9947-2013-05965-X},
}

\bib{LiebLoss}{book}{
  author={Lieb, E. H.},
  author={Loss, M.},
  title={Analysis},
  edition={2},
  series={Graduate Studies in Mathematics},
  volume={14},
  publisher={American Mathematical Society},
  place={Providence, RI},
  date={2001},
}

\bib{LGSWLJJCCZ2026}{article}{
  author={Liu, J.},
  author={Gao, G.},
  author={Sun, Z.},
  author={Wu, B.},
  author={Liu, S.},
  author={Jiang, J.},
  author={Ju, H.},
  author={Chen, L.},
  author={Cheng, R.},
  author={Zhang, X.},
  author={Dong, B.},
  title={Danus: orchestrating mathematical reasoning agents with fact-graph memory},
  date={2026},
  eprint={arXiv:2607.06447},
}

\bib{MenarguezSoria1992}{article}{
  author={Men{\'a}rguez, M. T.},
  author={Soria, F.},
  title={Weak type {$(1,1)$} inequalities of maximal convolution operators},
  journal={Rend. Circ. Mat. Palermo (2)},
  volume={41},
  date={1992},
  number={3},
  pages={342--352},
  doi={10.1007/BF02848939},
}

\bib{MSW}{article}{
   author={Mirek, M.},
   author={Szarek, T. Z.},
   author={Wr\'obel, B.},
   title={Dimension-free estimates for the discrete spherical maximal
   functions},
   journal={Int. Math. Res. Not. IMRN},
   date={2024},
   number={2},
   pages={901--963},
   issn={1073-7928},
   review={\MR{4692363}},
   doi={10.1093/imrn/rnac329},
}




\bib{Muller1990}{article}{
  author={M\"uller, D.},
  title={A geometric bound for maximal functions associated to convex
    bodies},
  journal={Pacific J. Math.},
  volume={142},
  date={1990},
  pages={297\ndash 312},
}

 

\bib{NaorTao2010}{article}{
  author={Naor, A.},
  author={Tao, T.},
  title={Random martingales and localization of maximal inequalities},
  journal={J. Funct. Anal.},
  volume={259},
  date={2010},
  pages={731\ndash 779},
}

\bib{NW}{article}{
  author={Nie, X.},
   author={Wang, P.},
   title={Dimension-free estimates for maximal functions with nonisotropic
   dilations},
   journal={Potential Anal.},
   volume={63},
   date={2025},
   number={1},
   pages={255--274},
   issn={0926-2601},
   review={\MR{4937885}},
   doi={10.1007/s11118-024-10170-4},
}

\bib{NiWr}{article}{
   author={Niksi\'nski, J.},
   author={Wr\'obel, B.},
   title={Dimension-free estimates for discrete maximal functions and
   lattice points in high-dimensional spheres and balls with small radii},
   language={English, with English and French summaries},
   journal={J. Math. Pures Appl. (9)},
   volume={214},
   date={2026},
   pages={Paper No. 103955, 60},
   issn={0021-7824},
   review={\MR{5090433}},
   doi={10.1016/j.matpur.2026.103955},
}

\bib{Ouhabaz}{book}{
  author={Ouhabaz, E. M.},
  title={Analysis of Heat Equations on Domains},
  series={London Mathematical Society Monographs Series},
  volume={31},
  publisher={Princeton University Press},
  place={Princeton, NJ},
  date={2005},
}

\bib{OuyangSpectorStockdale}{article}{
  author={Ouyang, Y.},
  author={Spector, D.},
  author={Stockdale, C. B.},
  title={A dimension-free weak-type $(1,1)$ bound for the vector
    Riesz transform on $\mathbb R^n$},
  date={2026},
  eprint={arXiv:2608.18068},
}

\bib{PetrosyanShahgholianUraltseva}{book}{
  author={Petrosyan, A.},
  author={Shahgholian, H.},
  author={Uraltseva, N.},
  title={Regularity of Free Boundaries in Obstacle-Type Problems},
  series={Graduate Studies in Mathematics},
  volume={136},
  publisher={American Mathematical Society},
  place={Providence, RI},
  date={2012},
}


\bib{SteinSI}{book}{
  author={Stein, E. M.},
  title={Singular Integrals and Differentiability Properties of Functions},
  series={Princeton Mathematical Series},
  volume={30},
  publisher={Princeton University Press},
  place={Princeton, NJ},
  date={1970},
}

\bib{SteinTopics}{book}{
  author={Stein, E. M.},
  title={Topics in Harmonic Analysis Related to the
    Littlewood--Paley Theory},
  series={Annals of Mathematics Studies},
  volume={63},
  publisher={Princeton University Press},
  place={Princeton, NJ},
  date={1970},
}

\bib{Stein1982BAMS}{article}{
  author={Stein, E. M.},
  title={The development of square functions in the work of A. Zygmund},
  journal={Bull. Amer. Math. Soc. (N.S.)},
  volume={7},
  date={1982},
  pages={359\ndash 376},
}

\bib{Stein1983BAMS}{article}{
  author={Stein, E. M.},
  title={Some results in harmonic analysis in $\mathbb{R}^n$, for $n\to\infty$},
  journal={Bull. Amer. Math. Soc. (N.S.)},
  volume={9},
  date={1983},
  pages={71\ndash 73},
}
\bib{Stein1987ICM}{article}{
  author={Stein, E. M.},
  title={Problems in harmonic analysis related to curvature and
    oscillatory integrals},
  book={
    title={Proceedings of the International Congress of Mathematicians
      (Berkeley, Calif., 1986)},
    volume={1},
    publisher={Amer. Math. Soc.},
    place={Providence, RI},
  },
  date={1987},
  pages={196\ndash 221},
}

\bib{SteinStromberg}{article}{
  author={Stein, E. M.},
  author={Str\"omberg, J.-O.},
  title={Behavior of maximal functions in $\mathbb R^n$ for large $n$},
  journal={Ark. Mat.},
  date={1983},
  volume={21},
  pages={259\ndash 269},
  doi={10.1007/BF02384314},
}


\bib{Z}{article}{
   author={Zienkiewicz, J.},
   title={Estimates for the Hardy--Littlewood maximal function on the
   Heisenberg group},
   journal={Colloq. Math.},
   volume={103},
   date={2005},
   number={2},
   pages={199--205},
   issn={0010-1354},
   review={\MR{2197849}},
   doi={10.4064/cm103-2-5},
}


\end{biblist}
\end{bibdiv}
\end{document}